\newtheorem{theorem}{Theorem}[section]
\newtheorem{lemma}[theorem]{Lemma}
\newtheorem{proposition}[theorem]{Proposition}
\newtheorem{corollary}[theorem]{Corollary}
\theoremstyle{definition}
\theoremstyle{remark}
\newtheorem{remark}[theorem]{Remark}
\numberwithin{equation}{section}
\newcommand{\ba}{\begin{array}}
\newcommand{\ea}{\end{array}}
\newcommand{\f}{\frac}
\newcommand{\Om}{\Omega}
\newcommand{\la}{\lambda}
\newcommand{\ds}{\displaystyle}
\begin{document}
\date{}
\title{ \bf\large{Asymptotic profiles of the steady states for an SIS epidemic patch model with asymmetric connectivity matrix}\thanks{S. Chen is supported by National Natural Science Foundation of China (No 11771109) and a grant from China Scholarship Council,  J. Shi is supported by US-NSF grant DMS-1715651 and DMS-1853598, and Z. Shuai is supported by US-NSF grant DMS-1716445.}}
\author{Shanshan Chen\textsuperscript{1}\footnote{Email: chenss@hit.edu.cn},\ \ Junping Shi\textsuperscript{2}\footnote{Corresponding Author, Email: jxshix@wm.edu},\ \ Zhisheng Shuai\textsuperscript{3}\footnote{Email: shuai@ucf.edu}, \\ Yixiang Wu \textsuperscript{4}\footnote{Email: yixiang.wu@mtsu.edu} \\
{\small \textsuperscript{1} Department of Mathematics, Harbin Institute of Technology,\hfill{\ }}\\
\ \ {\small Weihai, Shandong, 264209, P.R.China.\hfill{\ }}\\
{\small \textsuperscript{2} Department of Mathematics, William \& Mary,\hfill{\ }}\\
\ \ {\small Williamsburg, Virginia, 23187-8795, USA.\hfill {\ }} \\
{\small \textsuperscript{3} Department of Mathematics, University of Central Florida,\hfill{\ }}\\
\ \ {\small Orlando, Florida, 32816, USA.\hfill {\ }} \\
{\small \textsuperscript{4} Department of Mathematics, Middle Tennessee State University,\hfill{\ }}\\
\ \ {\small Murfreesboro, Tennessee, 37132, USA.\hfill {\ }}
}
\maketitle

\begin{abstract}
The dynamics of an SIS epidemic patch model with asymmetric connectivity matrix is analyzed. It is shown that the basic reproduction number $R_0$ is strictly decreasing with respect to the dispersal rate of the infected individuals, and  the model has a unique endemic equilibrium if $R_0>1$. The asymptotic profiles of the endemic equilibrium for small dispersal rates are characterized. In particular, it is shown that the endemic equilibrium converges to a limiting disease-free equilibrium as the dispersal rate of susceptible individuals tends to zero, and
the limiting disease-free equilibrium has a positive number of susceptible individuals on each low-risk patch. Moreover a sufficient and necessary condition is found to  guarantee that the limiting disease-free equilibrium has no positive number of susceptible individuals on each high-risk patch. Our results extend earlier results for symmetric connectivity matrix, and we also partially solve an open problem by Allen et al. (SIAM J. Appl. Math., 67: 1283-1309, 2007).\\
{\bf Keywords}: SIS epidemic patch model,  asymmetric connectivity matrix, asymptotic profile\\
{\bf MSC 2010}: 92D30, 37N25, 92D40.
\end{abstract}

\section{Introduction}

Various mathematical models have been proposed to describe and simulate the transmissions of infectious diseases, and the predictions provided by those models may help to prevent and control the outbreak of the diseases \cite{1991book,2008book,Diekmann2000}. The spreading of the infectious diseases in populations depends on the  spatial structure of the environment and the dispersal pattern of the populations. The impact of the  spatial heterogeneity of the environment and the dispersal rate of the populations   on the transmission of the diseases can be modeled in  discrete-space settings by ordinary differential equation patch models \cite{Allenpatch,Arino2003,lloyd1996spatial,WangZhaoP1} or  in  continuous-space settings by reaction-diffusion equation models \cite{Allen,Fitz2008,WangZhao}.

In a discrete-space setting,
Allen \emph{et al}. \cite{Allenpatch} proposed the following SIS (susceptible-infected-susceptible) epidemic
patch model:
\begin{equation}\label{patc1}
\begin{cases}
\ds\f{d \overline S_j}{dt}=d_S\sum_{k\in\Omega} (\overline L_{jk}\overline S_k-\overline L_{kj}\overline S_j)-\ds\f{\beta_j\overline S_j\overline I_j}{\overline S_j+\overline I_j}+\gamma_j \overline I_j, &j\in\Omega,\\
\ds\f{d \overline I_j}{dt}=d_I\sum_{k\in\Omega}(\overline  L_{jk}\overline I_k-\overline L_{kj}\overline I_j)+\ds\f{\beta_j\overline S_j\overline I_j}{\overline  S_j+\overline I_j}-\gamma_j \overline I_j,&j\in\Omega,
\end{cases}
\end{equation}
where $\Omega=\{1,2,\dots,n\}$ with $n\ge2$. Here $\overline S_j(t)$ and $\overline I_j(t)$ denote the number of the susceptible and infected individuals in patch $j$ at time $t$, respectively;  $\beta_j$ denotes the rate of disease transmission and $\gamma_j$ represents the rate of disease recovery in patch $j$; $d_S,d_I$ are the dispersal rates of the susceptible and infected populations, respectively; and $\overline L_{jk}\ge0 $ describes the degree of the movement of the individuals from patch $k$ to patch $j$ for $j,k\in\Omega$. A major assumption in \cite{Allenpatch} is that the  matrix $(\overline L_{jk})$ is symmetric.  In \cite{Allenpatch}, the authors defined the basic reproduction number $R_0$ of the model \eqref{patc1}; they showed  that if $R_0<1$ the disease-free equilibrium is globally asymptotically stable, and if $R_0>1$ the model has a unique positive endemic equilibrium. Moreover, the asymptotic profile of the endemic equilibrium as $d_S\rightarrow 0$ is characterized in \cite{Allenpatch}, and the case $d_I\rightarrow 0$ is studied in \cite{LiPeng} recently. We remark that there are extensive studies on patch epidemic models, see \cite{AM2019,Eisenberg2013,GaoRuan,gao2019habitat,JinWang,LiShuai,LiShuai2,Driess,Tien2015,WangZhaoP1,WangZhaoP2} and the references therein. The corresponding reaction-diffusion model of \eqref{patc1} was studied in \cite{Allen} where the dispersal of the population is modeled by diffusion. A similar model with diffusive and advective movement of the population is studied in \cite{CuiLamLou,CuiLou}, and more studies on diffusive SIS models can be found in \cite{DengWu,Jiang,KuoPeng,LiPeng,Li2018,WebWu2018,Peng2009,PengLiu,Peng2013,Tuncer2012,WuZou,Wu2017} and the references therein.

The assumption that the  matrix $(\overline L_{jk})$ is symmetric in \cite{Allenpatch,LiPeng} is similar to the assumption of diffusive dispersal in reaction-diffusion models. However, asymmetric (e.g. advective) movements of the populations in space are common, and so in this paper we consider \eqref{patc1} with $(\overline L_{jk})$ being asymmetric and establish the corresponding results in \cite{Allenpatch,LiPeng}. Moreover, we will provide solutions to some of the open problems in \cite{Allenpatch} without assuming  $(\overline L_{jk})$ is symmetric: (1) we prove that the basic reproduction number $R_0$ is strictly decreasing in $d_I$; (2) we partially characterize the asymptotic profile of the $S$-component of the endemic equilibrium as $d_S\rightarrow 0$. The monotonicity of $R_0$ has also been proven recently in \cite{Chen2019,Gao2019, GaoDong} with $\beta_i, \gamma_i>0$ for all $i\in \Omega$, while this assumption will be dropped in our result.
We also establish  the asymptotic profile of  the endemic equilibrium as $d_I\rightarrow 0$ when $L$ is asymmetric, which extends the results of \cite{LiPeng} in which $L$ is assumed to be symmetric.

Denote
\begin{equation*}
L_{jk}=\begin{cases}
\overline L_{jk},&j\ne k,\\
-\ds\sum_{k\in\Omega,k\ne j} \overline L_{kj},&j=k,
\end{cases}
\end{equation*}
where $L_{jj}$ is the total degree of movement out from patch $j\in\Omega$. We rewrite \eqref{patc1} as:
\begin{equation}\label{patc}
\begin{cases}
\ds\f{d \overline S_j}{dt}=d_S\sum_{k\in\Omega} L_{jk}\overline S_k-\ds\f{\beta_j\overline S_j\overline I_j}{\overline S_j+\overline I_j}+\gamma_j \overline I_j, &j\in\Omega,\\
\ds\f{d \overline I_j}{dt}=d_I\sum_{k\in\Omega}L_{jk}\overline I_k+\ds\f{\beta_j\overline S_j\overline I_j}{\overline  S_j+\overline I_j}-\gamma_j \overline I_j,&j\in\Omega.
\end{cases}
\end{equation}
Let
\begin{equation*}
H^{-}=\{j\in\Omega: \beta_j<\gamma_j\} \;\;\text{and}\;\; H^{+}=\{j\in\Omega: \beta_j>\gamma_j\},
\end{equation*}
and $H^-$ and $H^+$ are referred as the sets of patches of low-risk and high-risk, respectively.
We impose the following four assumptions:
\begin{enumerate}
\item [($A_0$)] $\beta_j\ge 0$ and $\gamma_j\ge 0$ for all $j\in\Omega$; $d_S, d_I>0$;

\item [($A_1$)] The connectivity matrix $L:=(L_{jk})$ is irreducible and quasi-positive;
\item [($A_2$)] $\overline S_j(0)\ge0$, $\overline I_j(0)\ge0$, and \begin{equation}\label{N}
N:=\sum_{j\in\Omega} [\overline S_j(0)+\overline I_j(0)]>0;
\end{equation}
\item [($A_3$)] $H^{-}$ and $H^{+}$ are nonempty, and $\Omega=H^{-}\cup H^{+}$.
\end{enumerate}
By adding the $2n$ equations in \eqref{patc}, we see that the total population
is conserved in the sense that
\begin{equation}
N=\sum_{j\in\Omega}\left[ \overline S_j(t)+\overline I_j(t) \right]\;\;\text{for any}\;\;t\ge0.
\end{equation}
We remark that $(A_0)$-$(A_3)$ are assumed in \cite{Allenpatch} with  $L$ being symmetric in addition.

Throughout the paper, we use the following notations. For $n\ge2$,
\begin{equation}
\begin{split}
&\mathbb R^n=\{\bm u=(u_1,\dots, u_n)^T:u_i\in\mathbb R \;\;\text{for any} \;\;i=1,\dots,n\},\\
&\mathbb R_+^n=\{\bm u=(u_1,\dots, u_n)^T:u_i\ge0 \;\;\text{for any} \;\;i=1,\dots,n\}.
\end{split}
\end{equation}
For an $n\times n$ real-valued matrix $A$, we denote the spectral bound of $A$ by
$$s(A):=\max\{\mathcal Re (\la):\la \;\;\text{is an eigenvalue of}\;\;A\}, $$
and the spectral radius of $A$ by
$$\rho(A):=\max\{|\la|:\la \;\;\text{is an eigenvalue of}\;\;A\}. $$
The matrix $A$ is called nonnegative if all the entries of $A$ are nonnegative.
The matrix $A$ is called positive if $A$ is nonnegative and nontrivial.
The matrix $A$ is called zero if all the entries of $A$ are zero.
The matrix $A$ is called quasi-positive (or cooperative) if all off-diagonal entries of $A$ are nonnegative.

Let $\bm u=(u_1,\dots, u_n)^T$ and $\bm v=(v_1,\dots, v_n)^T$ be two vectors.
We write $\bm u\ge \bm v$ if $u_i\ge v_i$ for any $i=1,\dots,n$.
We write $\bm u>\bm v$ if $u_i\ge v_i$ for any $i=1,\dots,n$, and there exists
$i_0$ such that $u_{i_0}>v_{i_0}$.
We write $\bm u\gg \bm v$ if $u_i>v_i$ for any $i=1,\dots,n$.
We say $\bm u$ is strongly positive if $\bm u\gg \bm 0$.

The rest of the paper is organized as follows. In Section 2, we prove that model \eqref{patc}-\eqref{N} admits a unique endemic equilibrium if $R_0>1$ and prove that $R_0$ is strictly decreasing in $d_I$.
In Section 3, we study the asymptotic profile of the endemic equilibrium as $d_S\rightarrow 0$ and $d_I\rightarrow 0$, and we partially solve an open problem
in \cite{Allenpatch}. In Section 4, we consider an example where the patches form a star graph.

\section{The basic reproduction number}

In this section, we study the properties of the basic reproduction number $R_0$ of model \eqref{patc}.
The following result on the spectral bound of the connectivity matrix $L$  follows directly from the Perron-Frobenius theorem.
\begin{lemma}\label{simp}
 Suppose that ($A_1$) holds. Then $s(L)=0$ is a simple
eigenvalue of $L$ with a strongly positive eigenvector $\bm \alpha$, where \begin{equation}\label{alp}
\bm {\alpha}=(\alpha_1,\dots,\alpha_n)^T, \;\;\alpha_j>0 \;\;\text{for any}\;\; j\in\Omega,\;\text{and}\;\; \sum_{i=1}^n\alpha_i=1.
\end{equation} Moreover, there exists no other eigenvalue of $L$ corresponding with a nonnegative eigenvector.
\end{lemma}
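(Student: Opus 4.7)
The plan is to reduce to the classical Perron-Frobenius theorem by shifting $L$ into the nonnegative cone, and then use the column structure of $L$ to identify the spectral bound explicitly.

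First I would choose a constant $c>0$ large enough that $B:=L+cI$ has all entries nonnegative; this is possible because $(A_1)$ tells us $L$ is quasi-positive, so only the diagonal entries can be negative. The matrix $B$ inherits irreducibility from $L$, and its eigenvectors coincide with those of $L$, with eigenvalues shifted by $+c$. In particular $\lambda$ is an eigenvalue of $L$ with eigenvector $\bm v$ if and only if $\lambda+c$ is an eigenvalue of $B$ with the same eigenvector, and $s(L)=\rho(B)-c$.

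Next I would apply the Perron-Frobenius theorem to the irreducible nonnegative matrix $B$: its spectral radius $\rho(B)$ is a simple eigenvalue admitting a strongly positive right eigenvector $\bm\alpha$, and $\rho(B)$ is the only eigenvalue of $B$ that has any nonnegative eigenvector (left or right). The key step that pins down the value of $s(L)$ is then a computation on the columns of $L$: using the definition of $L_{jk}$,
\begin{equation*}
\sum_{j\in\Omega}L_{jk}=L_{kk}+\sum_{j\ne k}\overline L_{jk}=-\sum_{j\ne k}\overline L_{jk}+\sum_{j\ne k}\overline L_{jk}=0,
\end{equation*}
so $\mathbf{1}^{T}L=\mathbf{0}^{T}$. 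Equivalently, $\mathbf{1}^{T}B=c\mathbf{1}^{T}$, which exhibits $c$ as an eigenvalue of $B$ with strongly positive left eigenvector $\mathbf{1}$. By the uniqueness part of Perron-Frobenius applied to $B^{T}$, this forces $c=\rho(B)$, hence $s(L)=\rho(B)-c=0$.

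Finally I would collect the conclusions: simplicity of $\rho(B)$ transfers to simplicity of $s(L)=0$; the Perron eigenvector of $B$, rescaled so its coordinates sum to one, furnishes the desired $\bm\alpha\gg\bm 0$ satisfying \eqref{alp}; and the uniqueness clause (no other eigenvalue of $L$ admits a nonnegative eigenvector) is exactly the corresponding clause for $B$ under the shift. The only real subtlety here is the column-sum computation and the decision to test positivity against the \emph{left} eigenvector $\mathbf{1}$, which is what lets us identify $s(L)$ without explicitly diagonalizing anything; I would not expect any genuine obstacle beyond making sure the quasi-positivity of $L$ is invoked before shifting.
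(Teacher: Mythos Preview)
Your argument is correct and is exactly the standard reduction to Perron--Frobenius that the paper intends; the paper itself gives no proof but simply asserts that the lemma ``follows directly from the Perron--Frobenius theorem.'' The shift $B=L+cI$ together with the column-sum identity $\mathbf{1}^{T}L=\mathbf{0}^{T}$ are precisely the details one fills in to make that assertion rigorous.
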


In the rest of the paper, we denote $\bm{\alpha}$ the positive eigenvector of $L$ as specified in Lemma \ref{simp}.


Then we observe that model \eqref{patc}-\eqref{N} admits
a unique disease-free equilibrium.
\begin{lemma}\label{disf}
 Suppose that $(A_0)$-$(A_2)$ hold. Then model \eqref{patc}-\eqref{N} has a unique disease-free equilibrium $(\hat S_1,\dots,\hat S_n,0,\dots,0)^T$ with $\hat S_j=\alpha_jN$.
\end{lemma}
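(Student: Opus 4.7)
The plan is to unpack the definition of a disease-free equilibrium, reduce it to a linear equation in the $\overline S$-variables, and then pin down the scalar multiple using the conservation of total population. A disease-free equilibrium is, by definition, a time-independent solution of \eqref{patc} with $\overline I_j=0$ for all $j\in\Omega$. Substituting $\overline I_j=0$ kills the nonlinear terms and the recovery terms in both equations, and the $I$-equations are automatically satisfied. The $S$-equations reduce to
\begin{equation*}
d_S\sum_{k\in\Omega}L_{jk}\overline S_k=0\quad\text{for all } j\in\Omega,
\end{equation*}
which, since $d_S>0$, is precisely $L\bm{\overline S}=\bm 0$.

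Next I would invoke Lemma \ref{simp}: under $(A_1)$ the matrix $L$ is irreducible and quasi-positive, $s(L)=0$ is a simple eigenvalue, and its eigenspace is spanned by the strongly positive vector $\bm\alpha$ normalized so that $\sum_{i=1}^n\alpha_i=1$. Simplicity of the eigenvalue $0$ forces $\ker L=\mathrm{span}(\bm\alpha)$, so there exists a scalar $c\in\mathbb R$ with $\overline S_j=c\,\alpha_j$ for every $j\in\Omega$.

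Finally I would use the conservation law derived from summing the $2n$ equations of \eqref{patc}, which together with the initial condition $(A_2)$ gives $\sum_{j\in\Omega}[\overline S_j+\overline I_j]=N$. Applied to a disease-free equilibrium this reads $c\sum_{j\in\Omega}\alpha_j=N$, and the normalization $\sum_{i=1}^n\alpha_i=1$ yields $c=N$. Hence $\hat S_j=N\alpha_j$, which is strongly positive since $\alpha_j>0$ and $N>0$, and uniqueness follows from the uniqueness of $c$.

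There is no real obstacle here: the only subtlety is to observe that one must use both pieces of Lemma \ref{simp} (namely, that $0$ is an eigenvalue so a nontrivial kernel vector exists, and that it is simple so the kernel is one-dimensional), and that the total-mass constant $N$ supplied by $(A_2)$ and the conservation identity is exactly what fixes the free scalar. No sign argument beyond $\alpha_j>0$ and $N>0$ is needed to guarantee the admissible form $\overline S_j\ge 0$.
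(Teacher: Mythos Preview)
Your proof is correct and follows essentially the same argument as the paper: reduce the disease-free equilibrium equations to $L\bm{\hat S}=\bm 0$, invoke Lemma~\ref{simp} to conclude $\hat S_j=c\,\alpha_j$ for some scalar $c$, and then use the total-population constraint $\sum_j \hat S_j=N$ together with $\sum_j\alpha_j=1$ to pin down $c=N$. Your write-up is slightly more explicit (verifying that the $I$-equations are trivially satisfied and that $\hat S_j>0$), but the route is identical.
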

\begin{proof}
If $(\hat S_1,\dots,\hat S_n,0,\dots,0)^T$ is a disease-free equilibrium, then $$L(\hat S_1,\dots,\hat S_n)^T=0.$$ It follows from Lemma \ref{simp} that
there exists $\hat k\in \mathbb R$ such that $\hat S_j=\alpha_j \hat k$ for any $j\in\Omega$. Noticing that
\begin{equation*}
\sum_{j\in\Omega} S_j=\hat k \sum_{j\in\Omega}\alpha_j=N,
\end{equation*}
 we have $\hat k=N$. This completes the proof.
\end{proof}

We adopt the standard processes in \cite{Driessche} to compute the new infection and transition matrices:
\begin{equation}\label{FV}
F=diag(\beta_j),\;\;V=diag(\gamma_j)-d_I L,
\end{equation}
and the basic reproduction number $R_0$ is defined as
$$
R_0=\rho(FV^{-1}).
$$

We recall the following well-known result (see, e.g., \cite[ Corollary 2.1.5]{berman1994nonnegative}):
\begin{lemma}\label{lam}
Suppose that $P$ and $Q$ are $n\times n$ real-valued matrices, $P$ is quasi-positive, $Q$ is nonnegative and nonzero, and
$P+Q$ is irreducible. Then, $s(P+aQ)$ is strictly increasing for $a\in(0,\infty)$.
\end{lemma}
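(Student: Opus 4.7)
The plan is to reduce this to an application of the Perron--Frobenius machinery for irreducible quasi-positive matrices, using both the right and left Perron eigenvectors to produce a clean scalar identity that forces strict monotonicity.

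First I would verify that the hypotheses propagate: for every $a>0$, the matrix $P+aQ$ is quasi-positive (sum of a quasi-positive matrix and a nonnegative matrix) and irreducible. Irreducibility follows because irreducibility of a quasi-positive matrix depends only on the support of its off-diagonal entries, and for $a>0$ the support of $P+aQ$ coincides with that of $P+Q$, which is assumed irreducible. By the Perron--Frobenius theorem applied to the nonnegative matrix $P+aQ+cI$ for $c$ large enough, and then shifting back by $-c$, the spectral bound $s(P+aQ)$ is a simple eigenvalue of $P+aQ$ with a strongly positive right eigenvector $\bm{v}_a\gg\bm 0$; applying the same reasoning to the transpose gives a strongly positive left eigenvector $\bm{u}_a\gg\bm 0$.

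The key step is then the following. Fix $0<a<b$ and let $\bm{v}_a$ be a strongly positive right Perron eigenvector of $P+aQ$ and $\bm{u}_b$ a strongly positive left Perron eigenvector of $P+bQ$. Compute the scalar $\bm u_b^{T}(P+bQ)\bm v_a$ in two different ways:
\begin{equation*}
\bm u_b^{T}(P+bQ)\bm v_a = s(P+bQ)\,\bm u_b^{T}\bm v_a,
\end{equation*}
while on the other hand
\begin{equation*}
\bm u_b^{T}(P+bQ)\bm v_a = \bm u_b^{T}(P+aQ)\bm v_a + (b-a)\,\bm u_b^{T}Q\bm v_a = s(P+aQ)\,\bm u_b^{T}\bm v_a + (b-a)\,\bm u_b^{T}Q\bm v_a.
\end{equation*}
Equating the two expressions gives
\begin{equation*}
\bigl(s(P+bQ)-s(P+aQ)\bigr)\,\bm u_b^{T}\bm v_a = (b-a)\,\bm u_b^{T}Q\bm v_a.
\end{equation*}
Since $\bm u_b\gg\bm 0$ and $\bm v_a\gg\bm 0$, the inner product $\bm u_b^{T}\bm v_a$ is strictly positive. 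Because $Q$ is nonnegative and nonzero and both Perron eigenvectors are entrywise strictly positive, $\bm u_b^{T}Q\bm v_a>0$. Combining with $b-a>0$ yields $s(P+bQ)>s(P+aQ)$, proving strict monotonicity.

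The main obstacle is not really an obstacle but a technicality: one must be careful that the Perron--Frobenius conclusions (simplicity, strict positivity of the Perron eigenvector on \emph{both} sides) really do apply to irreducible quasi-positive matrices, not merely nonnegative ones. This is handled by the standard shift trick $P+aQ\mapsto P+aQ+cI$ with $c$ chosen so the shifted matrix is nonnegative; the shift preserves irreducibility, preserves eigenvectors, and shifts every eigenvalue by $c$, so $s(P+aQ)$ corresponds to $\rho(P+aQ+cI)$ and inherits simplicity and a strongly positive eigenvector from the classical Perron--Frobenius theorem.
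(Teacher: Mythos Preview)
Your argument is correct. The paper does not actually prove this lemma; it simply quotes it as a known result from Berman and Plemmons (\emph{Nonnegative Matrices in the Mathematical Sciences}, Corollary~2.1.5). So there is no ``paper's own proof'' to compare against, and your self-contained argument via the left/right Perron eigenvectors is a clean and standard way to establish the statement directly. The one point worth being explicit about---that $\bm u_b^{T}Q\bm v_a>0$ because $Q$ nonnegative and nonzero together with $\bm v_a\gg\bm 0$ force $Q\bm v_a>\bm 0$, and then $\bm u_b\gg\bm 0$ finishes it---is handled correctly in your write-up.
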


By Lemma \ref{lam}, if $\gamma_j$ ($j\in \Omega$) are not all zero, then $V$ is invertible and therefore an $M$-matrix. The following result follows from \rm \cite{Driessche}.
\begin{proposition}
 Suppose that $(A_0)$-$(A_1)$ hold and $\gamma_j$ ($j\in \Omega$) are not all zero. Then the following statements hold:
\begin{enumerate}
\item [(i)] $R_0-1$ has the same sign as $s(F-V)=s\left(d_I L+diag(\beta_j-\gamma_j)\right)$.
\item [(ii)] If $R_0<1$, the disease-free equilibrium $(\hat S_1,\dots,\hat S_n,0,\dots,0)^T$ of \eqref{patc}-\eqref{N} is locally asymptotically stable; if $R_0>1$, the disease-free equilibrium is unstable.
\end{enumerate}
\end{proposition}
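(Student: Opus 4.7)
The plan is to invoke the next-generation matrix framework of \cite{Driessche}, specialized to the block structure of \eqref{patc} at the disease-free equilibrium identified in Lemma \ref{disf}.

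Before addressing either part, I first need to confirm that $V$ is a non-singular $M$-matrix with $V^{-1}\ge 0$. Since $L_{jk}\ge 0$ for $j\ne k$, $V$ has nonpositive off-diagonal entries, and so it suffices to show $s(-V)=s(d_I L-\mathrm{diag}(\gamma_j))<0$. For this I would apply Lemma \ref{lam} with the quasi-positive matrix $P:=d_I L-\mathrm{diag}(\gamma_j)$ and the nonnegative, nonzero matrix $Q:=\mathrm{diag}(\gamma_j)$. Since $P+Q=d_I L$ is irreducible, $a\mapsto s(P+aQ)$ is strictly increasing on $(0,\infty)$; comparing the values at $a=0$ and $a=1$ together with $s(d_I L)=0$ from Lemma \ref{simp} forces $s(P)<0$, which is the required bound.

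Part (i) then follows from a classical fact in $M$-matrix theory (see e.g.\ Lemma 2 of \cite{Driessche}): for any nonnegative $F$ and non-singular $M$-matrix $V$, the quantities $\rho(FV^{-1})-1$ and $s(F-V)$ share the same sign. The identity $s(F-V)=s(d_I L+\mathrm{diag}(\beta_j-\gamma_j))$ is immediate from \eqref{FV}.

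For part (ii), I would linearize \eqref{patc} at $(\hat S_1,\dots,\hat S_n,0,\dots,0)^T$. Using that the partial derivatives of $\beta_j\overline S_j\overline I_j/(\overline S_j+\overline I_j)$ with respect to the $\overline S_k$ vanish at $\overline I=\bm 0$, the Jacobian takes the block lower-triangular form
\begin{equation*}
J=\begin{pmatrix} d_S L & \mathrm{diag}(\gamma_j-\beta_j) \\ 0 & F-V \end{pmatrix},
\end{equation*}
so its spectrum is $\sigma(d_S L)\cup\sigma(F-V)$. By Lemma \ref{simp}, $0$ is a simple eigenvalue of $d_S L$ with eigenvector $(\bm\alpha,\bm 0)^T$ and all other eigenvalues of $d_S L$ have negative real part; since $\bm 1^T\bm\alpha=1$, this eigendirection is transverse to the invariant conservation manifold $\sum_j(\overline S_j+\overline I_j)=N$ on which the dynamics live, so it does not contribute to the reduced stability analysis. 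Hence local stability of the DFE on this manifold is determined solely by the sign of $s(F-V)$, and combining with (i) yields (ii). The main subtlety I anticipate is in this last step: the forced zero eigenvalue of $d_S L$ must be interpreted relative to the conservation constraint rather than in the ambient phase space, but the transversality $\bm 1^T\bm\alpha=1$ is precisely what ensures this reduction is legitimate.
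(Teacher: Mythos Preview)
Your proposal is correct and aligns with the paper's approach: the paper simply states that the result follows from \cite{Driessche}, after noting (via Lemma~\ref{lam}, exactly as you do) that $V$ is a non-singular $M$-matrix. Your explicit linearization and careful treatment of the forced zero eigenvalue through the conservation constraint are welcome elaborations that the paper leaves implicit in its one-line citation; note only the minor slip that the displayed Jacobian is block \emph{upper}-triangular rather than lower-triangular.
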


The following result on the monotonicity of the spectral bound was proved in \cite[Theorem 3.3 and 4.4]{Chen2019}, which is related Karlin's theorem on the reduction principle in evolution biology \cite{altenberg2012resolvent, Karlin}.
\begin{lemma}\label{monos}
Suppose that $(A_1)$ holds. Let $f_j\in\mathbb{R}$ for $j\in\Omega$. Then the following two statements hold:
\begin{enumerate}
\item [$(i)$]
If $(f_1,\dots,f_n)$ is a multiple of $(1,\dots,1)$, then $s\left(d_I L+diag(f_j)\right)\equiv f_1$.
\item [$(ii)$] If $(f_1,\dots,f_n)$ is not a multiple of $(1,\dots,1)$, then
$s\left(d_I L+diag(f_j)\right)$ is strictly decreasing for $d_I\in(0,\infty)$.
\end{enumerate}
Moreover,
$$
\lim_{d_I\rightarrow 0} s\left(d_I L+diag(f_j)\right)= \max_{j\in \Omega} f_j,
$$
and
$$
\lim_{d_I\rightarrow \infty} s\left(d_I L+diag(f_j)\right)=  \sum_{j\in \Omega} f_j\alpha_j.
$$
\end{lemma}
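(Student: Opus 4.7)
The plan is to handle the constant-$f$ case in (i), the strict monotonicity in (ii), and the two boundary limits separately.

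Part (i) is a direct spectral-mapping computation. If $(f_1,\dots,f_n) = c(1,\dots,1)$, then $d_I L + \mathrm{diag}(f_j) = d_I L + cI$, whose spectrum is $\{c + d_I \mu : \mu \text{ is an eigenvalue of } L\}$. By Lemma \ref{simp} we have $s(L) = 0$, so $s(d_I L + cI) = c = f_1$, independent of $d_I$.

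For (ii), write $\lambda(d) := s(dL + D)$ with $D = \mathrm{diag}(f_j)$. Because $L$ is irreducible and quasi-positive, so is $dL + D$, and Perron-Frobenius yields that $\lambda(d)$ is an algebraically simple eigenvalue with strongly positive right eigenvector $\bm{\phi}(d)$ and left eigenvector $\bm{\psi}(d)$. Kato's analytic perturbation theory then makes $\lambda,\bm{\phi},\bm{\psi}$ real-analytic on $(0,\infty)$, and contracting the differentiated eigenvalue equation with $\bm{\psi}(d)$ produces
\begin{equation*}
\lambda'(d) \;=\; \frac{\bm{\psi}(d)^T L\,\bm{\phi}(d)}{\bm{\psi}(d)^T \bm{\phi}(d)} \;=\; \frac{\lambda(d) - \overline f(d)}{d}, \qquad \overline f(d) := \frac{\sum_j f_j \psi_j(d)\phi_j(d)}{\sum_j \psi_j(d)\phi_j(d)},
\end{equation*}
where the second equality uses the componentwise identity $(L\bm{\phi})_j = (\lambda - f_j)\phi_j/d$. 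Thus the strict monotonicity in (ii) is equivalent to the strict inequality $\lambda(d) < \overline f(d)$ for all $d > 0$ whenever $f$ is non-constant.

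I expect this strict inequality to be the main technical obstacle. The cleanest route is a gauge transformation to a Markov generator: setting $D_\alpha := \mathrm{diag}(\alpha_j)$ and $\tilde L := D_\alpha^{-1} L D_\alpha$, one checks $\tilde L \mathbf{1} = 0$ (from $L\bm{\alpha}=0$) and $\bm{\alpha}^T \tilde L = 0$ (using that the columns of $L$ sum to zero by construction), so $\tilde L$ generates an irreducible continuous-time Markov chain on $\Omega$ with stationary distribution $\bm{\alpha}$; the eigenvalue equation transforms to $d\tilde L\bm w + D\bm w = \lambda\bm w$ with $w_j = \phi_j/\alpha_j$. The spectral bound of $d\tilde L + D$ then admits the Feynman-Kac representation
\begin{equation*}
\lambda(d) \;=\; \lim_{t\to\infty}\frac{1}{t}\log \mathbb E_x\Bigl[\exp\Bigl(\int_0^t D(X^d_r)\,dr\Bigr)\Bigr],
\end{equation*}
where $X^d$ is the chain with generator $d\tilde L$. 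For $d_1 < d_2$, the time change $X^{d_2}_r = X^{d_1}_{(d_2/d_1) r}$ together with strict Jensen's inequality applied to the concave function $x \mapsto x^{d_1/d_2}$ gives $\lambda(d_2) < \lambda(d_1)$, the strictness coming from irreducibility of $\tilde L$ together with non-constancy of $D$. This matches the cited result \cite[Theorem 3.3 and 4.4]{Chen2019}, related to Karlin's reduction principle; alternatively, one can derive it from the Donsker-Varadhan variational formula, which exhibits $\lambda(d)$ as a supremum over probability measures on $\Omega$ of a function whose slope in $d$ is non-positive and strictly negative off the stationary distribution $\bm{\alpha}$.

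The two boundary limits reduce to first-order perturbation. As $d\to 0^+$, continuity of $s(\cdot)$ on the cone of quasi-positive matrices gives $\lambda(d)\to s(D)=\max_j f_j$. As $d\to\infty$, write $\lambda(d)/d = s(L + d^{-1}D)$ and perturb about $L$; since $0$ is a simple eigenvalue of $L$ with right eigenvector $\bm{\alpha}$ and left eigenvector $\mathbf{1}$ (because the columns of $L$ sum to zero), first-order perturbation yields $\lambda(d)/d = d^{-1}(\mathbf{1}^T D\bm{\alpha})/(\mathbf{1}^T\bm{\alpha}) + O(d^{-2}) = d^{-1}\sum_j f_j\alpha_j + O(d^{-2})$ (using $\sum_j\alpha_j=1$), whence $\lambda(d)\to\sum_j f_j\alpha_j$.
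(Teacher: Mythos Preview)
The paper does not actually prove this lemma; it states the result and cites \cite[Theorems 3.3 and 4.4]{Chen2019}, noting the connection to Karlin's reduction principle. Your proposal goes beyond the paper by sketching a self-contained proof, so the routes necessarily differ.

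Your arguments for (i) and for the two boundary limits are correct and efficient: the spectral-mapping identity handles (i); continuity of the spectral bound in the matrix entries gives the $d_I\to 0$ limit; and first-order perturbation about the simple eigenvalue $0$ of $L$, with right eigenvector $\bm\alpha$ and left eigenvector $\mathbf{1}$ (since the columns of $L$ sum to zero by construction), gives the $d_I\to\infty$ limit.

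For (ii), the gauge transformation $\tilde L=D_\alpha^{-1}LD_\alpha$ is set up correctly, and the Feynman--Kac/Jensen route is indeed one of the standard proofs of the reduction principle. The one soft spot is the passage from \emph{strict} Jensen at each finite $t$ to \emph{strict} inequality in the limit $\lambda(d_2)<\lambda(d_1)$; that step is not automatic, since a strict inequality inside a limit need not survive. The cleanest repair uses pieces you already have: your Jensen argument gives the non-strict inequality $\lambda(d_2)\le\lambda(d_1)$; your boundary limits give $\lambda(0^+)=\max_j f_j>\sum_j f_j\alpha_j=\lambda(\infty)$ whenever $f$ is non-constant (the strict inequality holding because $\bm\alpha\gg\bm 0$ and $\sum_j\alpha_j=1$); and the real-analyticity of $\lambda$ on $(0,\infty)$ that you already invoked then forces strict decrease everywhere, since a real-analytic non-increasing function that agrees on a subinterval with a constant must be globally constant. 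Alternatively, the Donsker--Varadhan variational formula you mention yields strictness directly, since for non-constant $D$ the supremum is attained away from $\bm\alpha$, where the Dirichlet-form term is strictly negative.
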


Now we prove the monotonicity of the basic reproduction number $R_0$ with respect to $d_I$. We note that this result was also proved in \cite{Gao2019, GaoDong} with an additional assumption $\beta_j, \gamma_j>0$ for all $j\in\Omega$. If  $\gamma_j=0$, we set $\beta_j/\gamma_j=\infty$ when $\beta_j>0$ and $\beta_j/\gamma_j=0$ when $\beta_j=0$.
\begin{theorem}\label{samemono}
 Suppose that $(A_0)$-$(A_1)$ hold and $\gamma_j$ ($j\in \Omega$) are not all zero.
Then $R_0$ is strictly decreasing for $d_I\in(0,\infty)$ if $(\beta_1, \beta_2, ..., \beta_n)$ is not a multiple of $(\gamma_1, \gamma_2, ..., \gamma_n)$.
\end{theorem}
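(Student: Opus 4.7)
The plan is to characterize $R_0(d_I)$ as the unique positive solution $t$ of $s(F-tV)=0$, and then transport the monotonicity statement of Lemma \ref{monos} to this eigenvalue equation. First I would invoke the standard equivalence from \cite{Driessche}: whenever $F\ge 0$ and $V$ is a nonsingular $M$-matrix, $\rho(FV^{-1})<t$ is equivalent to $s(F-tV)<0$, with the analogous statements for $=$ and $>$ (obtained by scaling $V\mapsto tV$, which remains an $M$-matrix for every $t>0$). Under the hypotheses of the theorem, $V$ is a nonsingular $M$-matrix by Lemma \ref{lam}, so this applies. Note also that $(\beta_j)$ being not a multiple of $(\gamma_j)$ rules out $\beta_j\equiv 0$; since every diagonal entry of the nonnegative matrix $FV^{-1}$ equals $\beta_j(V^{-1})_{jj}$ with $(V^{-1})_{jj}>0$, we get $R_0(d_I)>0$.

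Now fix $0<d_I^{(1)}<d_I^{(2)}$, set $t:=R_0(d_I^{(1)})>0$, and write $V^{(i)}:=\mathrm{diag}(\gamma_j)-d_I^{(i)}L$. The characterization gives
\[
s\bigl(td_I^{(1)}L+\mathrm{diag}(\beta_j-t\gamma_j)\bigr)=s(F-tV^{(1)})=0.
\]
The crucial step is to verify that the vector $(\beta_j-t\gamma_j)_{j\in\Omega}$ is not a scalar multiple of $(1,\dots,1)$. Suppose, for contradiction, that $\beta_j-t\gamma_j\equiv c$ for some $c\in\mathbb{R}$. Then $td_I^{(1)}L+\mathrm{diag}(\beta_j-t\gamma_j)=td_I^{(1)}L+cI$, whose spectral bound is $c+td_I^{(1)}s(L)=c$ by Lemma \ref{simp}. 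Combined with the displayed equation this forces $c=0$, hence $\beta_j=t\gamma_j$ for every $j\in\Omega$, contradicting the hypothesis.

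Having excluded this degenerate case, Lemma \ref{monos}(ii) applies to the family $\tilde d\mapsto s\bigl(\tilde dL+\mathrm{diag}(\beta_j-t\gamma_j)\bigr)$ and shows it is strictly decreasing. Since $td_I^{(2)}>td_I^{(1)}$, we conclude
\[
s(F-tV^{(2)})=s\bigl(td_I^{(2)}L+\mathrm{diag}(\beta_j-t\gamma_j)\bigr)<0.
\]
By the Driessche--Watmough equivalence, this yields $R_0(d_I^{(2)})<t=R_0(d_I^{(1)})$, and strict monotonicity follows since $d_I^{(1)}<d_I^{(2)}$ were arbitrary.

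The genuine obstacle is the exceptional case analysis: Lemma \ref{monos}(ii) requires the diagonal perturbation to be nonconstant, but this is a condition on $t=R_0(d_I^{(1)})$, not directly on $(\beta_j)$ and $(\gamma_j)$. Translating the hypothesis "$(\beta_j)$ is not a multiple of $(\gamma_j)$" into the nonconstancy of $(\beta_j-t\gamma_j)$ relies essentially on the identity $s(L)=0$, which promotes the constant $c$ from the degenerate scenario into a rigid equality $\beta_j=t\gamma_j$. Everything else is bookkeeping around the standard $M$-matrix/next-generation formalism.
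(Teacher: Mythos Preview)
Your proof is correct and shares the paper's overall strategy: characterize $R_0$ through the eigenvalue equation $s(\,\cdot\,)=0$ and then invoke Lemma~\ref{monos} on the map $\tilde d\mapsto s(\tilde d L+\mathrm{diag}(f_j))$. The difference lies in how the degenerate case (the diagonal vector being constant) is handled. The paper first establishes the a~priori bounds $\min_j\beta_j/\gamma_j\le R_0\le\max_j\beta_j/\gamma_j$ and then performs a case split: if there exists $\tilde a$ with $(\tilde a\beta_j-\gamma_j)\equiv k$, the bounds are used to show that $1/R_0(d_I)$ never equals $\tilde a$, so Lemma~\ref{monos}(ii) applies throughout. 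You bypass both the bounds and the case split by observing that at the specific value $t=R_0(d_I^{(1)})$ the eigenvalue equation itself forces the constant to be zero: if $(\beta_j-t\gamma_j)\equiv c$ then $0=s(td_I^{(1)}L+cI)=c$ because $s(L)=0$, immediately contradicting the hypothesis. This is a genuinely cleaner route to the same conclusion; in particular it removes any need to treat the cases $\gamma_j=0$ separately when proving the bounds. What the paper's longer argument buys is the explicit inequality \eqref{R0bound}, which it reuses later in Theorem~\ref{ass}; your argument does not produce this as a by-product.
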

\begin{proof}
Clearly, $R_0=R_0(d_I)>0$ for $d_I \in(0,\infty)$. We claim that
\begin{equation}\label{R0bound}
    \ds\min_{j\in\Omega}\ds\f{\beta_j}{\gamma_j}\le R_0\le \ds\max_{j\in\Omega}\ds\f{\beta_j}{\gamma_j}.
\end{equation}
To see this, we first assume $\gamma_j>0$ for all $j\in\Omega$. Then, we have
$F_1\le F\le F_2$,
where
$$F_1=\left(\min_{j\in\Omega}\ds\f{\beta_j}{\gamma_j}\right)diag(r_j),\;\;F_2=\left(\max_{j\in\Omega}\ds\f{\beta_j}{\gamma_j}\right)diag(r_j).$$
Therefore,
\begin{equation}\label{lowupp}
\rho(F_1V^{-1})\le \rho(FV^{-1})\le \rho(F_2V^{-1}),
\end{equation}
 where $F$ and $V$ are defined by \eqref{FV}.
Since
\begin{equation}
\begin{split}
&(1,\dots,1)V=(\gamma_1,\dots,\gamma_n),\;\;(1,\dots,1)F_1=\left(\min_{j\in\Omega}\ds\f{\beta_j}{\gamma_j}\right)(\gamma_1,\dots,\gamma_n),\\
&(1,\dots,1)F_2=\left(\max_{j\in\Omega}\ds\f{\beta_j}{\gamma_j}\right)(\gamma_1,\dots,\gamma_n),\\
\end{split}
\end{equation}
we have
\begin{equation*}
\rho(F_1V^{-1})=\min_{j\in\Omega}\ds\f{\beta_j}{\gamma_j},\;\;\rho(F_2V^{-1})=\max_{j\in\Omega}\ds\f{\beta_j}{\gamma_j}.
\end{equation*}
This, together with \eqref{lowupp}, implies \eqref{R0bound}. It is not hard to check that \eqref{R0bound} still holds when $\gamma_j\ge 0$. Indeed, if $\gamma_{j_0}=\beta_{j_0}=0$ for some ${j_0}\in\Omega$, the arguments above still hold as $\beta_{j_0}/\gamma_{j_0}=0$. If $\gamma_{j_0}=0$ and $\beta_{j_0}>0$ for some ${j_0}\in\Omega$, then  $\beta_{j_0}/\gamma_{j_0}=\infty$. We can replace the $j_0$-th entry of $F_1$ by $0$ to obtain the first inequality of \eqref{R0bound}, and the second inequality of \eqref{R0bound} is trivial.

Let \begin{equation}\label{mu0}
\mu_0(d_I)=\frac{1}{R_0(d_I)},
\end{equation}
and  $$\la_1(d_I,a):=s(-V+aF)=s\left(d_IL+a F-diag(\gamma_j)\right).$$
The following discussion is divided into two cases.\\
{\bf Case 1.}
For any $a\in(0,\infty)$, $(a\beta_1-\gamma_1,\dots,a\beta_n-\gamma_n)$ is not a multiple of $(1,\dots,1)$. Then we see from Lemma \ref{monos} that
for any fixed $a>0$, $\la_1(d_I, a)$ is strictly decreasing for $d_I\in(0,\infty)$.
 Let $\phi>0$ be the corresponding eigenvector of $V^{-1}F$ with respect to $\rho(V^{-1}F)$. Then
\begin{equation*}
d_I L\phi-diag(\gamma_j)\phi+\mu_0(d_I)F\phi=0.
\end{equation*}
Since $L$ is irreducible, it follows that
$\phi\gg 0$ and $\la_1(d_I,\mu_0(d_I))=0$ for any $d_I>0$.
Let $d_I^{1}>d_I^{2}$.
Then
\begin{equation}\label{mumon}
\begin{split}
&\la_1\left(d_I^{2},\mu_0\left(d_I^{1}\right)\right)-\la_1\left(d_I^{2},\mu_0\left(d_I^{2}\right)\right)\\
=&\la_1\left(d_I^{2},\mu_0\left(d_I^{1}\right)\right)-\la_1\left(d_I^{1},\mu_0\left(d_I^{1}\right)\right)>0,
\end{split}
\end{equation}
which implies that
$$\mu_0\left(d_I^{1}\right)>\mu_0\left(d_I^{2}\right),$$
and consequently, $\mu_0(d_I)$ is strictly increasing for $d_I\in(0,\infty)$.\\
{\bf Case 2.} There exists $\tilde a>0$ such that $(\tilde a\beta_1-\gamma_1,\dots,\tilde a\beta_n-\gamma_n)$ is a multiple of $(1,\dots,1)$.
That is, there exists $k\in\mathbb {R}$ such that
$$
(\tilde a\beta_1-\gamma_1,\dots,\tilde a\beta_n-\gamma_n)=k(1,\dots,1).
$$
Clearly, $\tilde a$ is unique and $k\ne 0$ if $(\beta_1, \beta_2, ..., \beta_n)$ is not a multiple of $(\gamma_1, \gamma_2, ..., \gamma_n)$.

If $k>0$, then $\beta_j>0$ for all $j\in\Omega$ and
$$
R_0\ge \ds\min_{j\in\Omega}\ds\f{\beta_j}{\gamma_j}>\ds\f{1}{\tilde a},
$$
which implies that $\mu_0(d_I)<\tilde a$ for any $d_I>0$.
Similar to Case 1, it follows from Lemma \ref{monos} that $\la_1(d_I, a)$ is strictly decreasing for $d_I\in(0,\infty)$ for any fixed $a<\tilde a$.
Therefore, \eqref{mumon} holds, and
$\mu_0(d_I)$ is strictly increasing for $d_I\in(0,\infty)$.

If $k<0$, then $\gamma_j>0$ for all $j\in\Omega$ and
$$
R_0\le \ds\max_{j\in\Omega}\ds\f{\beta_j}{\gamma_j}<\ds\f{1}{\tilde a},
$$
which implies that $\mu_0(d_I)>\tilde a$ for any $d_I>0$. The
rest of the proof is similar to the case of $k>0$.
\end{proof}

 Then we compute the limits of $R_0$ as $d\to0$ or $d\to\infty$.
\begin{theorem}\label{ass}
Suppose that $(A_0)$-$(A_1)$ hold and $\gamma_j$ ($j\in \Omega$) are not all zero. Then the basic reproduction number $R_0=R(d_I)$ satisfies the following:
\begin{equation*}
\lim_{d_I\to0} R_0(d_I)=\ds\max_{j\in\Omega}\ds\f{\beta_j}{\gamma_j}\;\;\text{and}\;\;\lim_{d_I\to\infty} R_0(d_I)=\f{\sum_{j\in\Omega}\alpha_j\beta_j}{\sum_{j\in\Omega} \alpha_j\gamma_j}.
\end{equation*}
\end{theorem}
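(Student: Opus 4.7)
The plan is to work with $\mu_0(d_I) := 1/R_0(d_I)$ and its implicit characterization from the proof of Theorem~\ref{samemono}: $\mu_0(d_I)$ is the unique positive number with $\lambda_1(d_I,\mu_0(d_I)) = 0$, where
\begin{equation*}
\lambda_1(d_I,\mu) := s\bigl(d_I L + \mathrm{diag}(\mu\beta_j - \gamma_j)\bigr).
\end{equation*}
This converts the theorem into a question about the scalar equation $\lambda_1 = 0$, so the two limits of $R_0$ should be read off from the $d_I$-asymptotics of $\lambda_1$ provided by Lemma~\ref{monos} via a sandwich in the variable $\mu$.

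For the limit as $d_I \to 0$, I would set $m := \min_{j \in \Omega} \gamma_j/\beta_j$, with the convention $\gamma_j/\beta_j = +\infty$ when $\beta_j = 0$, so that $1/m = \max_j \beta_j/\gamma_j$. For each fixed $\mu > 0$, Lemma~\ref{monos} gives $\lim_{d_I \to 0}\lambda_1(d_I,\mu) = \max_j(\mu\beta_j - \gamma_j)$, which is strictly positive when $\mu > m$ and strictly negative when $0 < \mu < m$. Because $\lambda_1(d_I,\cdot)$ is strictly increasing in $\mu$ (by Lemma~\ref{lam} applied with $P = d_I L - \mathrm{diag}(\gamma_j)$ and $Q = F$, provided some $\beta_j > 0$), one pins $m - \epsilon < \mu_0(d_I) < m + \epsilon$ for all sufficiently small $d_I$ and every $\epsilon > 0$, yielding $\mu_0(d_I) \to m$ and hence $R_0(d_I) \to 1/m = \max_j \beta_j/\gamma_j$.

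The case $d_I \to \infty$ is handled identically: set $m_\infty := \sum_j \alpha_j\gamma_j / \sum_j \alpha_j\beta_j$, which is positive since all $\alpha_j > 0$ and at least one $\beta_j > 0$. The second asymptotic formula in Lemma~\ref{monos}, $\lim_{d_I \to \infty}\lambda_1(d_I,\mu) = \sum_j (\mu\beta_j - \gamma_j)\alpha_j$, changes sign exactly at $\mu = m_\infty$, and the identical sandwich forces $\mu_0(d_I) \to m_\infty$, giving $R_0(d_I) \to 1/m_\infty = \sum_j \alpha_j\beta_j / \sum_j \alpha_j\gamma_j$.

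The main obstacle is dealing neatly with the edge cases: (i) $m = 0$, i.e.\ some $\beta_{j_0} > 0$ with $\gamma_{j_0} = 0$, where the same argument yields $\lim_{d_I\to 0}\lambda_1(d_I,\mu) \ge \mu\beta_{j_0} > 0$ for every $\mu > 0$, forcing $\mu_0(d_I) \to 0$ and $R_0 \to +\infty$, matching $\max_j\beta_j/\gamma_j = +\infty$; (ii) the collinear case $(\beta_j) = c(\gamma_j)$, in which Lemma~\ref{monos}(i) together with \eqref{R0bound} gives $R_0 \equiv c$, in agreement with both limiting formulas; and (iii) the trivial case $\beta_j \equiv 0$, where $R_0 \equiv 0$. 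Apart from these degeneracies, the argument is a direct combination of Lemma~\ref{lam} with Lemma~\ref{monos}.
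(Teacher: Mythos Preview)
Your approach is essentially the paper's own: both pass to $\mu_0 = 1/R_0$, exploit the implicit relation $\lambda_1(d_I,\mu_0(d_I)) = 0$, and sandwich $\mu_0$ by combining the $d_I$-asymptotics of $\lambda_1$ from Lemma~\ref{monos} with the strict $\mu$-monotonicity from Lemma~\ref{lam}. The only point to tighten is the edge case where some $\beta_{j_0} = \gamma_{j_0} = 0$ (not covered by your (i)--(iii)): then $\max_j(\mu\beta_j - \gamma_j) = 0$ rather than $<0$ for $0 < \mu < m$, so your lower sandwich bound does not go through directly, and the paper handles this residual case by invoking the a priori bound \eqref{R0bound}.
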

\begin{remark}
 If $L$ is symmetric, then
 $$
 \lim_{d_I\to\infty} R_0(d_I)=\f{\sum_{j\in\Om}\beta_j}{\sum_{j\in\Om}\gamma_j}.
 $$
 as $\alpha_j\equiv 1/n$.
\end{remark}
\begin{proof}
Let $\mu_0(d_I)$ and $\lambda_1(d_I, a)$ be defined as in the proof of Theorem \ref{samemono}. Noticing that $\mu_0(d_I)$ is increasing in $d_I$, let
$$
\mu_1=\lim_{d_I\to0} \mu_0(d_I) \ \ \text{and }\ \ \mu_2=\lim_{d_I\to\infty} \mu_0(d_I),
$$
where $\mu_1\in [0, \infty)$ and  $\mu_2\in (0, \infty]$.
By Lemma \ref{monos}, for any $a>0$,
\begin{equation}\label{limit11}
\lim_{d_I\to0} \lambda_1(d_I, a)=\max_{j\in\Omega}\{a\beta_j-\gamma_j \} \ \ \text{and }\ \ \lim_{d_I\to\infty} \lambda_1(d_I, a)=\sum_{j\in\Omega}(a\beta_j-\gamma_j)\alpha_j.
\end{equation}
Since  $\la_1\left(d_I,\mu_0\left(d_I\right)\right)=0$, we have
\begin{equation}\label{limit22}
\max_{j\in\Omega}\{\mu_1\beta_j-\gamma_j \}=0 \ \ \text{and }\ \ \sum_{j\in\Omega}(\mu_2\beta_j-\gamma_j)\alpha_j=0.
\end{equation}
Indeed, to see the first equality, for given $\epsilon>0$ there exists $\hat d_I>0$ such that $\mu_1-\epsilon<\mu_0(d_I)<\mu_1+\epsilon$ for all $d_I<\hat d_I$. By Lemma \ref{lam}, we have
$$
\la_1\left(d_I,\mu_1-\epsilon\right)< \la_1\left(d_I,\mu_0\left(d_I\right)\right)=0 <\la_1\left(d_I,\mu_1+\epsilon\right) \ \ \text{for all } d_I<\hat d_I.
$$
By \eqref{limit11}, we have
\begin{equation*}
\max_{j\in\Omega}\{(\mu_1-\epsilon)\beta_j-\gamma_j \} \le 0 \le \max_{j\in\Omega}\{(\mu_1+\epsilon)\beta_j-\gamma_j \}.
\end{equation*}
Since $\epsilon>0$ is arbitrary, we obtain the first equality. The other equality in \eqref{limit22} can be proved similarly.

It follows from \eqref{limit22} that
\begin{equation*}
\lim_{d_I\to0} R_0(d_I)\ge \ds\max_{j\in\Omega}\ds\f{\beta_j}{\gamma_j} \;\;\text{and}\;\;\lim_{d_I\to\infty} R_0(d_I)=\f{\sum_{j\in\Omega}\alpha_j\beta_j}{\sum_{j\in\Omega} \alpha_j\gamma_j},
\end{equation*}
where the equality holds for $d_I\to 0$ if there exists no $j\in\Omega$ such that $\beta_j=\gamma_j=0$.
Noticing \eqref{R0bound}, the proof is complete.
\end{proof}

\section{The endemic equilibrium}
In this section, we consider the endemic equilibrium of model \eqref{patc}-\eqref{N}. The equilibria of \eqref{patc}-\eqref{N} satisfy
\begin{equation}\label{patcs}
\begin{cases}
d_S\ds\sum_{k\in\Omega} L_{jk}   S_k-\ds\f{\beta_j   S_j   I_j}{   S_j+   I_j}+\gamma_j    S_j=0, &j\in\Omega,\\
d_I\ds\sum_{k\in\Omega} L_{jk}   I_k+\ds\f{\beta_j   S_j   I_j}{    S_j+   I_j}-\gamma_j    I_j=0,&j\in\Omega,\\
\ds\sum_{j\in\Omega} (   S_j+   I_j)=N.
\end{cases}
\end{equation}
Firstly, we study the existence and uniqueness of the endemic equilibrium. Then, we investigate the asymptotic profile of the endemic equilibrium as $d_S\rightarrow 0$ and/or $d_I\rightarrow0$.

\subsection{The existence and uniqueness}
In this section, we show that \eqref{patc}-\eqref{N} has a unique endemic equilibrium if $R_0>1$. Motivated by \cite{Allenpatch}, we first introduce an equivalent problem of \eqref{patcs}.
\begin{lemma}\label{equiv1}
Suppose that $(A_0)$-$(A_3)$ hold. Then $(S_1,\dots, S_n, I_1,\dots, I_n)^T$ is a non-negative solution of \eqref{patcs} if and only if
\begin{equation*}
( S_1,\dots, S_n, I_1,\dots, I_n)= \left(\kappa \check S_1,\dots,\kappa \check S_n, \ds\f{\kappa}{d_I} \check I_1,\dots,\ds\f{\kappa}{d_I} \check I_n\right),
 \end{equation*}
where $( \check S_1,\dots, \check S_n, \check I_1,\dots, \check I_n)$ satisfies
\begin{equation}\label{equiveq}
\begin{cases}
d_S \check S_j+ \check I_j=\alpha_j, &j\in\Omega,\\
d_I\ds\sum_{k\in\Omega}L_{jk}\check I_k+\check I_j\left(\beta_j-\gamma_j-\ds\f{d_S\beta_j \check I_j}{d_I(\alpha_j-\check I_j)+d_S\check I_j}\right)=0,&j\in\Omega,\\
\end{cases}
\end{equation}
and
\begin{equation}\label{kap}
\kappa=\ds\f{d_IN}{\ds\sum_{j\in\Omega}(d_I\check S_j+\check I_j)}.
\end{equation}
\end{lemma}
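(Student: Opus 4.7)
The plan is to establish the equivalence via a natural change of variables suggested by the structure of \eqref{patcs}. Given a nonnegative solution $(S_1,\dots,S_n,I_1,\dots,I_n)^T$ of \eqref{patcs}, I would first add the two patchwise equations; the transmission and recovery terms cancel, leaving $\sum_{k\in\Omega}L_{jk}(d_S S_k+d_I I_k)=0$ for each $j\in\Omega$. In vector form this is $L(d_S\bm S+d_I\bm I)=\bm 0$, so by Lemma~\ref{simp} there is a scalar $\kappa$ with $d_S S_j+d_I I_j=\kappa\alpha_j$ for all $j$; nonnegativity of $(S,I)$ together with $N>0$ forces $\kappa>0$. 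Setting $\check S_j:=S_j/\kappa$ and $\check I_j:=d_I I_j/\kappa$ immediately yields the first line of \eqref{equiveq}.

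Next, to obtain the second line of \eqref{equiveq}, I would substitute $S_j=\kappa\check S_j$ and $I_j=(\kappa/d_I)\check I_j$ into the $I$-equation of \eqref{patcs} and multiply by $d_I/\kappa$, obtaining
\[
d_I\sum_{k\in\Omega}L_{jk}\check I_k+\check I_j\Bigl[\frac{\beta_j d_I\check S_j}{d_I\check S_j+\check I_j}-\gamma_j\Bigr]=0.
\]
Using $d_S\check S_j=\alpha_j-\check I_j$ from the first line of \eqref{equiveq}, the bracketed fraction rewrites as $\beta_j - d_S\beta_j\check I_j/[d_I(\alpha_j-\check I_j)+d_S\check I_j]$, yielding the second line of \eqref{equiveq}. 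The formula \eqref{kap} for $\kappa$ then follows from the total-mass constraint $N=\sum_j(S_j+I_j)=(\kappa/d_I)\sum_j(d_I\check S_j+\check I_j)$.

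The converse direction runs the same manipulations in reverse. Given $(\check S,\check I)$ solving \eqref{equiveq} and $\kappa$ defined by \eqref{kap}, set $S_j:=\kappa\check S_j$ and $I_j:=(\kappa/d_I)\check I_j$; reversing the algebraic simplification above recovers the $I$-equation of \eqref{patcs}. For the $S$-equation I would use that the first line of \eqref{equiveq} gives $d_S\bm S+d_I\bm I=\kappa\bm\alpha$, hence $L(d_S\bm S+d_I\bm I)=\kappa L\bm\alpha=\bm 0$ by Lemma~\ref{simp}; subtracting the already-verified $I$-equation from the $j$-th component of this identity leaves exactly the $S$-equation, and the normalization $\sum_j(S_j+I_j)=N$ is immediate from \eqref{kap}. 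The only genuinely computational step is the rewrite of the transmission term; everything else reduces to the Perron identity $L\bm\alpha=\bm 0$ and bookkeeping of the scalar $\kappa$, so I do not anticipate a serious obstacle.
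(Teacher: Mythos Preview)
Your proposal is correct and follows essentially the same approach as the paper: both add the $S$- and $I$-equations to obtain $L(d_S\bm S+d_I\bm I)=\bm 0$, invoke Lemma~\ref{simp} to write $d_S S_j+d_I I_j=\kappa\alpha_j$, and then use the substitution $\check S_j=S_j/\kappa$, $\check I_j=d_I I_j/\kappa$ to derive \eqref{equiveq} and \eqref{kap}. If anything, your treatment is slightly more complete, since you spell out the converse direction and the recovery of the $S$-equation via $L\bm\alpha=\bm 0$, whereas the paper handles only the forward direction explicitly.
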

\begin{proof}
Clearly, from \eqref{patcs}, we have
\begin{equation*}
\sum_{k\in\Omega}L_{jk}\left(d_S     S_k+d_I    I_k\right)=0\;\;\text{for any}\;\;j\in\Omega.
\end{equation*}
Then it follows from Lemma \ref{simp} that there exists $\kappa>0$ such that
\begin{equation}\label{kape}
d_S    S_j+d_I    I_j=\kappa \alpha_j\;\;\text{for any}\;\; j\in\Omega.
\end{equation}
Let
\begin{equation}\label{subs}
\check S_j=\ds\f{    S_j}{\kappa},\;\; \check I_j=\ds\f{d_I    I_j}{\kappa}.
\end{equation}
Then $$d_S \check S_j+\check  I_j=\alpha_j\;\; \text{for any}\;\; j\in\Omega.$$
Plugging \eqref{kape}-\eqref{subs} into the second equation of \eqref{patcs}, we see that $\check I_j$ satisfies the second equation of
\eqref{equiveq}. Since
\begin{equation*}
N=\sum_{j\in\Omega}(    S_j+    I_j)=\kappa\sum_{j\in\Omega}\left(\check S_j+\ds\f{\check I_j}{d_I}\right),
\end{equation*}
\eqref{kap} holds.
This completes the proof.
\end{proof}

From Lemma \ref{equiv1}, to analyze the solutions of \eqref{equiveq}, we only need to consider the equations of $\check I_j$ in \eqref{equiveq}.
We consider an auxiliary problem of \eqref{equiveq}.

\begin{lemma}\label{exis}
Suppose that $(A_0)$-$(A_3)$ hold and $R_0>1$. Then, for any $d>0$, the following equation
\begin{equation}\label{au}
\begin{cases}
d_I\ds\sum_{k\in\Omega} L_{jk} \check I_k+\check I_j\left(\beta_j-\gamma_j-\ds\f{\beta_j\check I_j}{d(\alpha_j-\check I_j)+\check I_j}\right)=0,&j\in\Omega,\\
0\le \check I_j\le \alpha_j, &j\in\Omega,
\end{cases}
\end{equation}
admits exactly one non-trivial solution $\bm {\check I}=(\check I_1,\dots, \check  I_n)^T$, where $0< \check I_j<\alpha_j$ for any $j\in\Omega$. Moreover, $\check I_j$ is  monotone increasing in $d\in (0, \infty)$ for any $j\in\Omega$.
\end{lemma}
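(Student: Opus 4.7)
My plan is to reformulate \eqref{au} as a cooperative fixed-point problem and then handle existence, uniqueness, and monotonicity in $d$ in separate steps. Set
\begin{equation*}
b_j(x;d):=\beta_j-\gamma_j-\frac{\beta_j x}{d(\alpha_j-x)+x},\qquad F_j(\bm u;d):=d_I(L\bm u)_j+u_j\,b_j(u_j;d),
\end{equation*}
so that \eqref{au} reads $F(\bm{\check I};d)=\bm 0$ with $\bm 0\le\bm{\check I}\le\bm\alpha$. An elementary computation gives, on $0<x<\alpha_j$,
\begin{equation*}
\partial_x b_j(x;d)=-\frac{d\,\beta_j\alpha_j}{[d(\alpha_j-x)+x]^{2}}<0 \quad\text{and}\quad \partial_d b_j(x;d)=\frac{\beta_j x(\alpha_j-x)}{[d(\alpha_j-x)+x]^{2}}>0,
\end{equation*}
so $b_j(\cdot;d)$ is strictly decreasing with $b_j(0;d)=\beta_j-\gamma_j$ and $b_j(\alpha_j;d)=-\gamma_j$, while $b_j(x;\cdot)$ is strictly increasing. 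Because $L$ has nonnegative off-diagonal entries, $F(\cdot;d)$ is cooperative and generates a monotone semiflow on the invariant box $[\bm 0,\bm\alpha]$.

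For \emph{existence}, note that $\bar{\bm u}:=\bm\alpha$ is a supersolution, since $F_j(\bm\alpha;d)=d_I(L\bm\alpha)_j-\alpha_j\gamma_j=-\alpha_j\gamma_j\le 0$. The hypothesis $R_0>1$, together with the sign relation between $R_0-1$ and $s\bigl(d_IL+\mathrm{diag}(\beta_j-\gamma_j)\bigr)$ established earlier, yields $\lambda_1:=s\bigl(d_IL+\mathrm{diag}(\beta_j-\gamma_j)\bigr)>0$ with a strongly positive eigenvector $\bm\phi$. For $\epsilon>0$ sufficiently small, $\epsilon\bm\phi\ll\bm\alpha$ and
\begin{equation*}
F_j(\epsilon\bm\phi;d)=\epsilon\phi_j\!\left[\lambda_1-\frac{\beta_j\epsilon\phi_j}{d(\alpha_j-\epsilon\phi_j)+\epsilon\phi_j}\right]>0,
\end{equation*}
so $\underline{\bm u}:=\epsilon\bm\phi$ is a strict subsolution. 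Running $\dot{\bm u}=F(\bm u;d)$ from $\underline{\bm u}$ yields an increasing orbit confined to $[\underline{\bm u},\bar{\bm u}]$, whose limit $\bm{\check I}$ is an equilibrium. Irreducibility of $L$ rules out $\check I_j=0$ (a zero component would propagate along the strongly connected graph of $L$, contradicting $\bm{\check I}\ge\underline{\bm u}>\bm 0$), and a symmetric propagation argument using $L\bm\alpha=\bm 0$ together with the existence of some patch with $\gamma_j>0$ (forced by $H^-\neq\emptyset$ in $(A_3)$) rules out $\check I_j=\alpha_j$. Hence $0<\check I_j<\alpha_j$ for every $j$.

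For \emph{uniqueness}, suppose $\bm v,\bm w$ are two such solutions with $\bm 0\ll\bm v,\bm w\ll\bm\alpha$. Set $u_j:=v_j/w_j$, substitute $v_k=u_kw_k$ into the equation for $\bm v$, and subtract $u_j$ times the equation for $\bm w$ to get
\begin{equation*}
d_I\sum_{k\in\Omega}L_{jk}w_k(u_k-u_j)=u_jw_j\bigl[b_j(w_j;d)-b_j(u_jw_j;d)\bigr],\qquad j\in\Omega.
\end{equation*}
At an index $j^*$ where $u_j$ attains its maximum one has $u_k-u_{j^*}\le 0$ and $L_{j^*k}\ge 0$ for $k\ne j^*$ (the $k=j^*$ term vanishes), so the left-hand side is $\le 0$; if $u_{j^*}>1$, then $u_{j^*}w_{j^*}>w_{j^*}$ and the strict monotonicity of $b_{j^*}$ makes the right-hand side strictly positive, a contradiction. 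Hence $u_{j^*}\le 1$, and the mirror argument at a minimizer $j_*$ gives $u_{j_*}\ge 1$, forcing $u\equiv 1$ and $\bm v=\bm w$. I expect this step to be the main obstacle, because the standard symmetric-$L$ proof in \cite{Allenpatch} relies on a quadratic-form identity that is unavailable when $L^{T}\ne L$; the ratio device above circumvents symmetry, using only the sign structure and irreducibility of $L$ together with the strict $x$-monotonicity of $b_j$.

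For \emph{monotonicity in $d$}, let $\bm{\check I}(d)$ denote the unique solution constructed above, and take $0<d_1<d_2$. Because $0<\check I_j(d_1)<\alpha_j$ and $\partial_d b_j>0$ on $(0,\alpha_j)$,
\begin{equation*}
F_j\bigl(\bm{\check I}(d_1);d_2\bigr)=\check I_j(d_1)\bigl[b_j(\check I_j(d_1);d_2)-b_j(\check I_j(d_1);d_1)\bigr]>0
\end{equation*}
for every $j\in\Omega$, so $\bm{\check I}(d_1)$ is a strict subsolution for the $d_2$-problem while $\bm\alpha$ remains a supersolution. Solving $\dot{\bm u}=F(\bm u;d_2)$ from $\bm u(0)=\bm{\check I}(d_1)$ gives $\dot{\bm u}(0)\gg\bm 0$, and cooperativity of the semiflow keeps the orbit strictly above $\bm{\check I}(d_1)$; its $t\to\infty$ limit is a positive equilibrium which, by uniqueness, must equal $\bm{\check I}(d_2)$. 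Therefore $\check I_j(d_2)>\check I_j(d_1)$ for every $j$, giving the required (strict) monotonicity.
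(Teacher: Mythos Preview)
Your proof is correct and takes a somewhat different route from the paper's. The paper packages existence and uniqueness together by verifying that the vector field is \emph{strictly sublinear} on the box $[\bm 0,\bm\alpha]$ (they compute $f_j(\lambda x)-\lambda f_j(x)\ge 0$ directly) and then invoking the abstract result of Zhao--Jing for strongly monotone, sublinear semiflows \cite[Corollary 3.2]{ZhaoJing}; this yields a unique globally attracting positive equilibrium in one stroke. You instead build a concrete sub/super-solution pair $(\epsilon\bm\phi,\bm\alpha)$ for existence and prove uniqueness by the ratio device $u_j=v_j/w_j$ combined with a maximum-principle argument. Your approach is more self-contained---it essentially unwinds the proof of the abstract sublinearity theorem in this particular setting---while the paper's is shorter but requires chasing the cited reference. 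For the monotonicity in $d$, the two arguments are essentially the same comparison: the paper compares flows with different $d$ launched from the same initial data, and you observe that $\bm{\check I}(d_1)$ is a subsolution for the $d_2$-problem and run the flow from there.

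One small caveat: your uniqueness step invokes ``the strict monotonicity of $b_{j^*}$'', but $\partial_x b_j=-d\beta_j\alpha_j/[\,\cdot\,]^2$ vanishes when $\beta_j=0$, which $(A_0)$ permits. If $\beta_{j^*}=0$ the right-hand side of your identity is zero rather than positive; you then need to use that the left-hand side forces $u_k=u_{j^*}$ for every $k$ with $L_{j^*k}>0$ and propagate along the irreducible graph of $L$ until you reach a patch with $\beta_k>0$ (guaranteed by $H^+\ne\emptyset$). The same issue appears in the monotonicity step, where $F_j(\bm{\check I}(d_1);d_2)=0$ at patches with $\beta_j=0$, so $\bm{\check I}(d_1)$ is only a (non-strict) subsolution; this affects only the strictness claim, not the monotone inequality the lemma requires (and which the paper's proof also delivers as a non-strict inequality).
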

\begin{proof}
Since $R_0>1$, $s\left(d_I L+diag(\beta_j-\gamma_j)\right)>0$.
Let
\begin{equation}\label{fj}f_j(\check I_j)=\check I_j\left(\beta_j-\gamma_j-\ds\f{\beta_j\check I_j}{d(\alpha_j-\check I_j)+\check I_j}\right),
\end{equation}
and consider the following problem
\begin{equation}\label{au2}
\frac{d \bar I_j}{dt}=d_I\sum_{k\in\Omega} L_{jk} \bar I_k+f_j(\bar I_j),\;\;j\in\Omega,\;\;t>0.
\end{equation}
Let $\bm g(\bm {\check I})=\left(g_1(\bm {\check I}),\dots,g_n(\bm {\check I})\right)^T$ be the vector field corresponding to the right hand side of \eqref{au2}, and let
\begin{equation*}
U=\{\bm {\check I}=\left(\check I_1,\dots,\check I_n\right)^T\in\mathbb R^n:0\le \check I_j\le \alpha_j,\;j\in\Omega\}.
\end{equation*}
Then $U$ is positive invariant with respect to \eqref{au2}, and for any $\bm {\check I}\in U$,
\begin{equation*}
D_{\bm {\check I}} \bm g(\bm {\check I})=d_IL+diag(f'_j(\check I_j)),
\end{equation*}
which is irreducible and quasi-positive.
Let $\Psi_t$ be the semiflow induced by \eqref{au2}. By \cite[Theorem B.3]{SmithW}, $\Psi_t$ is strongly positive and monotone.

For any $\la\in(0,1)$ and $\check I_j\in (0,\alpha_j]$, we have
\begin{equation}
\begin{split}
f_j(\la \check I_j)-\la f(\check I_j)=&-\ds\f{\la^2\beta_j \check I_j^2}{d(\alpha_j-\la \check I_j)+\la \check I_j}+\ds\f{\la\beta_j \check I_j^2}{d(\alpha_j- \check I_j)+ \check I_j}\\
=&\ds\f{d\la\alpha_j\beta_j\check I_j^2(1-\la)}{[d(\alpha_j-\la \check I_j)+\la \check I_j][d(\alpha_j- \check I_j)+ \check I_j]}\ge0,
\end{split}
\end{equation}
and the strict inequality holds for at least one $j$.
This implies that $\bm g(\bm {\check I})$ is strictly sublinear on $U$ (see \cite{ZhaoJing} for the definition of strictly sublinear functions).
Noticing $s\left(d_I L+diag(\beta_j-\gamma_j)\right)>0$, it follows from \cite[Theorem 2.3.4]{ZhaoBook} (or \cite[Corollary 3.2]{ZhaoJing}) that there exists a unique  $\bm {\check I}\gg\bm 0$ in $U$ such
that every solution in $U\setminus \{\bm 0\}$ converges to $\bm {\check I}$.
Moreover, if $\check I_j=\alpha_j$ for some $j\in\Omega$, then
$\check I'_j\le -\gamma_j<0$, which implies that
$\check I_j\in (0,\alpha_j)$ for any $j\in\Omega$.

Suppose $d_1>d_2$. Let $\bm {\check I}^{(i)}=(\check I_1^{(i)},\dots,\check  I_n^{(i)})^T$ be the unique strongly positive solution of \eqref{au} with $d=d_i$ for $i=1,2$,
 and let $ \bm {\bar I}^{(i)}(t)=(\bar I_1^{(i)}(t),\dots, \bar I_n^{(i)}(t))^T$  be the solution of \eqref{au2} with $d=d_i$ for $i=1,2$, and $\bm {\bar I}^{(1)}(0)=\bm {\bar I}^{(2)}(0)\in U\setminus \{\bm 0\}$. Then for any $j\in\Omega$,
\begin{equation}
\begin{split}
\ds\f{d \bar I_j^{(1)}}{dt}=&d_I\sum_{k\in\Omega} L_{jk}  \bar I_k^{(1)}+ \bar I^{(1)}_j\left(\beta_j-\gamma_j-\ds\f{\beta_j \bar I^{(1)}_j}{d_1(\alpha_j- \bar I^{(1)}_j)+ \bar I^{(1)}_j}\right)\\
\ge&d_I \sum_{k\in\Omega} L_{jk}  \bar I_k^{(1)} + \bar I^{(1)}_j\left(\beta_j-\gamma_j-\ds\f{\beta_j \bar I^{(1)}_j}{d_2(\alpha_j- \bar I^{(1)}_j)+ \bar I^{(1)}_j}\right).
\end{split}
\end{equation}
It follows from the comparison principle that
$\bar I_j^{(1)}(t)\ge  \bar I_j^{(2)}(t)$ for any $t\ge0$ and $j\in\Omega$. Therefore,
$\check I_j^{(1)}=\ds\lim_{t\to\infty} \bar I_j^{(1)}(t)\ge   \check I_j^{(2)}=\ds\lim_{t\to\infty} \bar I_j^{(2)}(t)$ for any $j\in\Omega$.
\end{proof}

Lemma \ref{exis} was proved in \cite{Allenpatch} when $L$ is symmetric by virtue of the upper and lower solution method. Here we prove it without assuming the symmetry of $L$ by the monotone dynamical system method.

By Lemmas \ref{equiv1}-\ref{exis},  we can show that model \eqref{patc}-\eqref{N} has a unique endemic equilibrium if $R_0>1$.
\begin{theorem}\label{mainexis}
Suppose that $(A_0)$-$(A_3)$ hold and $R_0>1$.  Then  \eqref{patc}-\eqref{N} has exactly two non-negative equilibria: the disease-free equilibrium and the endemic equilibrium
\begin{equation}\label{ende}
(  S_1,\dots,  S_n,  I_1,\dots,  I_n)=\left(\kappa  \check S_1,\dots, \kappa \check S_n, \ds\f{\kappa  \check I_1}{d_I},\dots,\ds\f{\kappa  \check I_n}{d_I}\right),
\end{equation}
where
\begin{equation}\label{skap}
\check  S_j=\ds\f{\alpha_j- \check I_j}{d_S},\;\; \kappa=\ds\f{d_IN}{\ds\sum_{j\in\Omega}(d_I \check S_j+ \check I_j)},
\end{equation}
and $(\check I_1,\dots, \check I_n)^T$ is the unique strongly positive solution of \eqref{au} with $d:={d_I}/{d_S}$.
\end{theorem}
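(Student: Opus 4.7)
The plan is to assemble Theorem 3.3 directly from the two preceding lemmas. First I would use Lemma \ref{equiv1} to reduce the problem of classifying non-negative solutions of \eqref{patcs} to classifying non-negative solutions of the reduced system \eqref{equiveq} in the variable $\bm{\check I}$ (with $\check S_j$ expressed in terms of $\check I_j$ via the first equation of \eqref{equiveq}). A short algebraic manipulation shows that, upon setting $d=d_I/d_S$, the second equation of \eqref{equiveq} is exactly the equation of \eqref{au}: indeed,
\begin{equation*}
\frac{d_S\beta_j\check I_j}{d_I(\alpha_j-\check I_j)+d_S\check I_j}=\frac{\beta_j\check I_j}{d(\alpha_j-\check I_j)+\check I_j}.
\end{equation*}
Thus the endemic equilibria of \eqref{patc}-\eqref{N} are in one-to-one correspondence with the non-negative solutions of \eqref{au} with $d=d_I/d_S$.

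Next I would invoke Lemma \ref{exis} to classify non-negative solutions of \eqref{au}. Since $R_0>1$, that lemma supplies exactly one strongly positive solution with $0<\check I_j<\alpha_j$ for every $j$. It remains to rule out mixed boundary solutions, i.e.\ non-trivial non-negative $\bm{\check I}$ with $\check I_{j_0}=0$ for some $j_0$. At such an index the equation becomes $d_I\sum_{k\ne j_0} L_{j_0 k}\check I_k=0$; since each $L_{j_0 k}\ge 0$ and each $\check I_k\ge 0$, this forces $\check I_k=0$ for every $k$ with $L_{j_0 k}>0$, and then the irreducibility of $L$ from $(A_1)$ propagates $\check I\equiv \bm 0$. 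So every non-negative solution of \eqref{au} is either trivial or the unique strongly positive one.

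Finally I would reconstruct the two equilibria via Lemma \ref{equiv1}. The trivial solution $\check I\equiv\bm 0$ gives $\check S_j=\alpha_j/d_S$, and the definition \eqref{kap} yields $\kappa=d_S N$, whence $S_j=\kappa\check S_j=\alpha_j N$, recovering the disease-free equilibrium of Lemma \ref{disf}. The strongly positive $\bm{\check I}$ produces $\check S_j=(\alpha_j-\check I_j)/d_S>0$, and since $\check I_j>0$ and $\check S_j>0$ the denominator in \eqref{kap} is positive so $\kappa>0$; plugging these into \eqref{ende}-\eqref{skap} yields the endemic equilibrium with $S_j,I_j>0$ and $S_j+I_j>0$, so all fractions in \eqref{patcs} are well defined. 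Uniqueness of the endemic equilibrium follows from the uniqueness of both $\bm{\check I}$ in Lemma \ref{exis} and of $\kappa$ in Lemma \ref{equiv1}.

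The main bookkeeping obstacle is simply carrying out the substitution carefully, checking that the rescaling in Lemma \ref{equiv1} is a bijection between equilibria of \eqref{patcs} and pairs $(\bm{\check I},\kappa)$ with $\bm{\check I}$ a non-negative solution of \eqref{au} (with $d=d_I/d_S$) and $\kappa$ given by \eqref{kap}; once this is set up, the result is essentially immediate from Lemma \ref{exis}, and the irreducibility argument rules out mixed boundary solutions.
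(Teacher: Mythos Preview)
Your proposal is correct and follows exactly the route the paper takes: the paper's proof consists of the single sentence ``This result follows from Lemmas \ref{equiv1}--\ref{exis}'', and you have simply filled in the bookkeeping behind that sentence. One small remark: your separate irreducibility argument to exclude mixed boundary solutions is already subsumed by the statement of Lemma \ref{exis}, which asserts that \eqref{au} has \emph{exactly one non-trivial} solution (not merely a unique strongly positive one), so that step is redundant though not incorrect.
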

\begin{proof}
This result follows from Lemmas \ref{equiv1}--\ref{exis}.
\end{proof}

\subsection{Asymptotic profile with respect to $d_S$}
In this subsection, we study the asymptotic profile of the endemic equilibrium of \eqref{patc}-\eqref{N} as $d_S\to 0$.
We suppose that $(A_0)$-$(A_3)$ hold throughout this subsection.
Moreover, we observe that $R_0$ is independent of $d_S$. Therefore, we assume  $R_0>1$ throughout this subsection so that the endemic equilibrium exists for all $d_S>0$.

We first study the asymptotic profile of $\kappa$ and $I_j$, where $\kappa$ and $I_j$ are defined in Theorem \ref{mainexis}.
\begin{lemma}\label{II}
As $d_S\to0$, $\kappa\to 0$ and $I_j\to 0$ for any $j\in\Omega$.
\end{lemma}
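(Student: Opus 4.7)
The plan is to find a patch $j_0$ on which $\check I_{j_0}$ stays bounded away from $\alpha_{j_0}$ as $d_S\to 0$, so that the $j_0$-th contribution $d_I(\alpha_{j_0}-\check I_{j_0})/d_S$ in the denominator of $\kappa$ in \eqref{skap} blows up, forcing $\kappa\to 0$. Once $\kappa\to 0$, the bound $\check I_j<\alpha_j$ immediately gives $I_j = \kappa\check I_j/d_I \to 0$ for every $j\in\Omega$.

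Setting $d=d_I/d_S\to\infty$, Lemma \ref{exis} shows that $\check I_j(d)$ is monotone nondecreasing in $d$ and bounded above by $\alpha_j$, so $\check I_j(d)\to \check I_j^*\in(0,\alpha_j]$ as $d_S\to 0$. I claim that $\check I_{j_0}^*<\alpha_{j_0}$ for every $j_0\in H^-$, which is nonempty by $(A_3)$. Suppose, for contradiction, that $\check I_{j_0}^*=\alpha_{j_0}$. The $j_0$-th equation of \eqref{au} reads
$$d_I\sum_{k\in\Omega} L_{j_0 k}\check I_k + \check I_{j_0}(\beta_{j_0}-\gamma_{j_0}) - \frac{\beta_{j_0}\check I_{j_0}^2}{d(\alpha_{j_0}-\check I_{j_0})+\check I_{j_0}} = 0.$$
Using $L\bm\alpha=\bm 0$ from Lemma \ref{simp}, the first term equals $d_I\sum_{k\in\Omega} L_{j_0 k}(\check I_k-\alpha_k)$. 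Since $L_{j_0 k}\ge 0$ for $k\ne j_0$, $\check I_k\le \alpha_k$, and $\check I_{j_0}-\alpha_{j_0}\to 0$, this tends to a nonpositive quantity. The second term tends to $\alpha_{j_0}(\beta_{j_0}-\gamma_{j_0})<0$ because $j_0\in H^-$. The third term is nonnegative and bounded above by $\beta_{j_0}\alpha_{j_0}$ (the denominator exceeds $\check I_{j_0}$), so along a subsequence it converges to a finite nonnegative limit; its negative is $\le 0$. Adding the three limits yields a strictly negative quantity that should equal $0$, a contradiction.

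With the claim established, for any $j_0\in H^-$ we have $\alpha_{j_0}-\check I_{j_0}\to \alpha_{j_0}-\check I_{j_0}^*>0$, hence $d_I(\alpha_{j_0}-\check I_{j_0})/d_S\to\infty$, whence $\kappa\to 0$ and $I_j\to 0$ for all $j\in\Omega$.

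The main obstacle is the singular nonlinearity $\beta_j\check I_j^2/[d(\alpha_j-\check I_j)+\check I_j]$, whose exact limit a priori depends on the unknown rate at which $\alpha_j-\check I_j$ may vanish. The trick that sidesteps this difficulty is to avoid computing the limit altogether: the universal upper bound $\beta_j\alpha_j$ supplies a convergent subsequence, and only the sign of this limit is needed to drive the contradiction.
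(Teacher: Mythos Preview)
Your proof is correct, but it takes a genuinely different route from the paper's. The paper works directly in the original $(S_j,I_j)$ coordinates: from the first equation of \eqref{patcs} one has, for $j\in H^-$,
\[
d_S\sum_{k\in\Omega}L_{jk}S_k \;=\; \frac{\beta_jS_jI_j}{S_j+I_j}-\gamma_j I_j \;\le\; (\beta_j-\gamma_j)I_j \;\le\; 0,
\]
and since $S_k\in(0,N]$ the left side tends to $0$ as $d_S\to0$; this squeezes $I_j\to0$ for $j\in H^-$. Then the relation $\kappa\alpha_j=d_SS_j+d_II_j$ forces $\kappa\to0$, which in turn gives $I_j\to0$ for all $j$. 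No monotonicity of $\check I_j$ and no analysis of the singular nonlinearity are needed.

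You instead work in the transformed $\check I$ coordinates: you first use the monotonicity of $\check I_j$ in $d$ (Lemma~\ref{exis}) to get convergence $\check I_j\to\check I_j^*$, and then argue by contradiction from the $j_0$-th equation of \eqref{au} that $\check I_{j_0}^*<\alpha_{j_0}$ for $j_0\in H^-$. This is exactly what the paper later records as Lemma~\ref{mono} and the inclusion $H^-\subset J^-$; those results do not rely on the present lemma, so there is no circularity. Two small remarks: (i) your subsequence trick for the singular term is unnecessary---simply dropping the nonpositive term $-\beta_{j_0}\check I_{j_0}^2/[d(\alpha_{j_0}-\check I_{j_0})+\check I_{j_0}]$ gives the inequality $d_I\sum_k L_{j_0 k}\check I_k+\check I_{j_0}(\beta_{j_0}-\gamma_{j_0})\ge0$, and passing to the limit already yields the contradiction; (ii) the paper's approach is the shorter path to this particular lemma, but yours has the advantage of establishing structure (convergence of $\check I_j$, $H^-\subset J^-$) that the paper needs anyway.
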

\begin{proof}
For any sequence $\{d_S^{(m)}\}_{m=1}^\infty$ such that $\ds\lim_{m\to \infty} d_S^{(m)}=0$, we denote the corresponding endemic equilibrium by $(   S^{(m)}_1,\dots,   S^{(m)}_n,   I^{(m)}_1,\dots,   I^{(m)}_n)$.
Since $   I^{(m)}_j\in(0,N]$, there exists a subsequence
$\{d_S^{(m_l)}\}_{l=1}^\infty$ such that
$\ds\lim_{l\to\infty}   I_j^{(m_l)} =   I^*_j$ for some $   I^*_j\in [0, N]$. For $j\in H^{-}$,
$$d_S^{(m_l)}\sum_{k\in\Omega}L_{jk}   S^{(m_l)}_k\le    I_j^{(m_l)}(\beta_j-\gamma_j)\le0.$$
Since $   S^{(m_l)}_k\in(0,N]$ for any $l\ge1$ and $k\in\Omega$, we have
$$\lim_{l\to\infty}d_S^{(m_l)}\sum_{k\in\Omega}L_{jk}   S^{(m_l)}_k=0,$$
which implies $   I_j^*=0$. Therefore $   I_j\to 0$ as $d_S\to0$ for $j\in H^{-}$.

Since
 $$d_S   S_j+d_I I_j=\kappa \alpha_j\;\;\text{for any}\;\; j\in\Omega,$$
 and $H^{-}\ne\emptyset$ by $(A_3)$,
 we have $\kappa\to 0$ as $d_S\to0$. This in turn implies that for $j\in H^+$,
$$
 I_j=\ds\f{\kappa\alpha_j-d_S S_j}{d_I}\to 0\;\;\text{as}\;\;d_S\to0.
$$
\end{proof}

\begin{lemma}\label{mono}
 For each $j\in\Omega$, $\check I_j$ is monotone decreasing in $d_S\in (0, \infty)$ and $\ds\lim_{d_S\to 0}
\check I_j=\check I_j^*\in(0,\alpha_j].$
\end{lemma}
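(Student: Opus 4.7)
The plan is to leverage Lemma \ref{exis} via the change of variables $d = d_I/d_S$. Recall from Theorem \ref{mainexis} that $(\check I_1,\dots,\check I_n)^T$ is the unique strongly positive solution of \eqref{au} with $d = d_I/d_S$, and Lemma \ref{exis} asserts that $\check I_j$ is monotone increasing as a function of $d \in (0,\infty)$. Since $d_I$ is fixed in this subsection, the map $d_S \mapsto d_I/d_S$ is strictly decreasing on $(0,\infty)$. Composing these two monotonicities immediately gives that $\check I_j$ is monotone decreasing in $d_S \in (0,\infty)$ for each $j \in \Omega$.

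For the limit statement, I would argue as follows. By Lemma \ref{exis}, for every $d_S > 0$ we have $0 < \check I_j < \alpha_j$. Monotonicity in $d_S$ combined with this upper bound implies that the limit
\begin{equation*}
\check I_j^* := \lim_{d_S \to 0^+} \check I_j
\end{equation*}
exists in $(0, \alpha_j]$ as a monotone bounded sequence. To see that $\check I_j^* > 0$, I would fix any reference value $d_S^{0} > 0$. For all $d_S \in (0, d_S^{0})$, monotone decreasing in $d_S$ yields $\check I_j(d_S) \ge \check I_j(d_S^{0}) > 0$, and passing to the limit gives $\check I_j^* \ge \check I_j(d_S^{0}) > 0$.

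There is essentially no obstacle here: the whole content is already encoded in Lemma \ref{exis}, and the only thing to check is the direction of monotonicity under the reparametrization $d = d_I/d_S$. The main point worth being careful about is that the endemic equilibrium is guaranteed to exist for every $d_S > 0$ (which is why we assume $R_0 > 1$ and why $R_0$ is independent of $d_S$, as already noted in the beginning of this subsection), so that $\check I_j(d_S)$ is in fact defined on the full interval $(0, \infty)$ and the limit as $d_S \to 0^+$ makes sense.
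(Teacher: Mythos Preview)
Your proposal is correct and follows essentially the same route as the paper's own proof: reparametrize via $d=d_I/d_S$, invoke the monotonicity in $d$ from Lemma~\ref{exis}, and conclude monotone decrease in $d_S$; then use the bounds $0<\check I_j<\alpha_j$ together with monotonicity to obtain the limit in $(0,\alpha_j]$. Your added detail (fixing a reference $d_S^0$ to show $\check I_j^*>0$) is a slight elaboration of what the paper leaves implicit, but the argument is otherwise identical.
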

\begin{proof}
We notice that  $(\check I_j)$ is the positive solution of \eqref{au} with $d=d_I/d_S$. By Lemma \ref{exis},
$\check I_j$ is monotone increasing  in $d$, which implies that
$\check I_j$ is monotone decreasing in $d_S$ for each $j\in\Omega$. Since $\check I_j\in(0,\alpha_j)$ from Lemma \ref{equiv1}, we have   $\ds\lim_{d_S\to 0}\check I_j=\check I_j^*\in(0,\alpha_j].$
\end{proof}

From Lemma \ref{mono},  we denote
\begin{equation}\label{jpjn}
J^-=\{j\in\Omega: 0<\check I_j^*<\alpha_j\},\;\;\text{and}\;\;J^+=\{j\in\Omega:\check I^*_j=\alpha_j\}.
\end{equation}
Clearly $\Omega=J^-\cup  J^+$.  We show that $J^-$ is nonempty.
\begin{lemma}
$J^-$ is nonempty, and $H^{-}\subset J^{-}$.
\end{lemma}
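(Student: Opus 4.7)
The plan is to prove the stronger assertion $H^-\subset J^-$ directly; the nonemptiness of $J^-$ then follows automatically since $H^-\neq\emptyset$ by $(A_3)$. I would argue by contradiction: fix $j_0\in H^-$ and suppose $j_0\notin J^-$, which since $\Omega=J^-\cup J^+$ forces $\check I_{j_0}^*=\alpha_{j_0}$.

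The $j_0$-th equation of \eqref{au} with $d=d_I/d_S$ can be rearranged as
\begin{equation*}
d_I\sum_{k\in\Omega}L_{j_0k}\check I_k=\check I_{j_0}(\gamma_{j_0}-\beta_{j_0})+\frac{\beta_{j_0}\check I_{j_0}^{\,2}}{d(\alpha_{j_0}-\check I_{j_0})+\check I_{j_0}},
\end{equation*}
where the last term is nonnegative. Letting $d_S\to 0$ and invoking Lemma \ref{mono} (monotone convergence of $\check I_k$ to $\check I_k^*$ for every $k$, together with $\check I_{j_0}\to\alpha_{j_0}$ by assumption), the limit inequality becomes
\begin{equation*}
d_I\sum_{k\in\Omega}L_{j_0k}\check I_k^*\ge \alpha_{j_0}(\gamma_{j_0}-\beta_{j_0})>0,
\end{equation*}
the strict positivity coming from $j_0\in H^-$ and $\alpha_{j_0}>0$.

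The contradiction I would extract comes from exploiting $L\bm\alpha=\bm 0$. Writing $\sum_k L_{j_0k}\check I_k^* = \sum_k L_{j_0k}(\check I_k^*-\alpha_k)$, the $k=j_0$ term vanishes by the contradiction hypothesis $\check I_{j_0}^*=\alpha_{j_0}$; for $k\neq j_0$, quasi-positivity of $L$ gives $L_{j_0k}\ge 0$, while Lemma \ref{mono} gives $\check I_k^*\le\alpha_k$. Hence each surviving term is nonpositive, so $\sum_k L_{j_0k}\check I_k^*\le 0$, contradicting the displayed positive lower bound. Therefore $j_0\in J^-$, proving $H^-\subset J^-$ and in particular $J^-\neq\emptyset$.

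The main obstacle I anticipate is keeping track of the singular term $\beta_j\check I_j^{\,2}/[d(\alpha_j-\check I_j)+\check I_j]$ at the site $j_0$ where $\check I_{j_0}\to\alpha_{j_0}$ and $d\to\infty$ simultaneously: its limit is ambiguous. The trick is that we do not need the exact limit at all, only its nonnegativity, which lets us bypass the delicate asymptotic balance and reduce the problem to a linear inequality governed by $L\bm\alpha=\bm 0$.
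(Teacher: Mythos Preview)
Your proof is correct and follows essentially the same approach as the paper: assume by contradiction that some $j_0\in H^-$ has $\check I_{j_0}^*=\alpha_{j_0}$, drop the nonnegative nonlinear term to get a linear inequality, pass to the limit, and then use $L\bm\alpha=\bm 0$ together with quasi-positivity and $\check I_k^*\le\alpha_k$ to force the opposite sign. Your rewriting via $\sum_k L_{j_0k}(\check I_k^*-\alpha_k)$ and your explicit remark about the indeterminate limit of the singular term are nice touches, but the argument is the same as the paper's.
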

\begin{proof}
Suppose that there exists $j\in\Omega$ such that $\beta_j-\gamma_j<0$ and
$\check I_j^*=\alpha_j$. By \eqref{au}, we have
$$
d_I\sum_{k\in\Omega} L_{jk} \check I_k+\check I_j(\beta_j-\gamma_j)\ge0.
$$
Taking $d_S\to0$ on both sides, we have
\begin{equation}\label{ineq}
d_I\sum_{k\ne j,k\in\Omega}L_{jk} \check I_k^*+d_IL_{jj}\alpha_j\ge \alpha_j(\gamma_j-\beta_j)>0.
\end{equation}
Since $$d_I\sum_{k\ne j,k\in\Omega}L_{jk} \alpha_k+d_IL_{jj}\alpha_j=0,$$ and
$\check I_j^*\in(0,\alpha_j]$ for any $j\in\Omega$, we have
$$d_I\sum_{k\ne j,k\in\Omega}L_{jk} \check I_k^*+d_IL_{jj}\alpha_j\le0,$$
which contradicts with \eqref{ineq}. Therefore, $H^{-}\subset J^{-}$.
\end{proof}

By virtue of the above lemma, we can prove the following result about the asymptotic profile of $S_j$. The proof is similar to \cite[Lemma 4.4]{Allenpatch}, and we omit it here.
\begin{lemma}\label{estimi}
Let $J^{-}$ be defined as above. Then
\begin{enumerate}
\item [$(i)$]$\ds\lim_{d_S\to 0}\f{\kappa}{d_S}=\ds\f{N}{\ds\sum_{k\in J^{-}}(\alpha_k-\check I_k^*)}$;
\item [$(ii)$]For any  $j\in\Omega$, $\ds\lim_{d_S\to 0}S_j= \ds\f{N}{\ds\sum_{k\in J^-}(\alpha_k-\check I_k^*)}(\alpha_j-\check I_j^*)$.
\end{enumerate}
\end{lemma}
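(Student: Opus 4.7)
The plan is to derive both statements directly from the explicit formulas in Theorem \ref{mainexis} together with the convergence $\check I_j \to \check I_j^*$ established in Lemma \ref{mono}, and the fact (proved immediately above) that $J^-$ is nonempty so the claimed limit denominator is positive.

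First I would combine $\check S_j = (\alpha_j - \check I_j)/d_S$ with the definition \eqref{skap} of $\kappa$ to get a tractable expression. Substituting gives
\begin{equation*}
\frac{\kappa}{d_S} \;=\; \frac{d_I N}{d_S \sum_{j\in\Omega}(d_I \check S_j + \check I_j)} \;=\; \frac{d_I N}{\sum_{j\in\Omega}\bigl[d_I(\alpha_j-\check I_j) + d_S \check I_j\bigr]}.
\end{equation*}
Now I would take $d_S \to 0$ in the right-hand side. Since $0 < \check I_j < \alpha_j$ for every $j$ and $d_S>0$, the numerator is constant and the denominator is bounded. By Lemma \ref{mono}, $\check I_j \to \check I_j^*$ for each $j$, and $d_S \check I_j \to 0$. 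Splitting the sum over $J^-$ and $J^+$ and using $\alpha_j - \check I_j^* = 0$ exactly on $J^+$ (by the definition \eqref{jpjn}), the limit of the denominator equals $d_I \sum_{k\in J^-}(\alpha_k - \check I_k^*)$, which is strictly positive because $J^-\neq \emptyset$. This yields statement (i). The one point requiring mild care is to justify that the denominator stays bounded away from zero along the limit; the nonemptiness of $J^-$ is precisely what rules out a degenerate limit.

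Statement (ii) follows almost immediately. From Theorem \ref{mainexis}, $S_j = \kappa \check S_j = (\kappa/d_S)(\alpha_j - \check I_j)$. Multiplying the limit obtained in (i) by $\alpha_j - \check I_j \to \alpha_j - \check I_j^*$ gives
\begin{equation*}
\lim_{d_S\to 0} S_j \;=\; \frac{N}{\sum_{k\in J^-}(\alpha_k - \check I_k^*)}\,(\alpha_j - \check I_j^*),
\end{equation*}
which is the claim. Note this limit vanishes for $j\in J^+$ and is positive for $j\in J^-$, consistent with the interpretation that the susceptible population concentrates on the patches where the disease is not saturating.

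I do not expect any serious obstacle: the analytical content (monotonicity and pointwise convergence of $\check I_j$, and the fact $H^-\subset J^-$ so $J^-\neq\emptyset$) has already been established in Lemma \ref{mono} and the preceding lemma. The argument is essentially a continuity computation, and the only subtle point is the nonvanishing of the limiting denominator, which is handled by the nonemptiness of $J^-$.
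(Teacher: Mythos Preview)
Your proof is correct; the paper itself omits the proof of this lemma, stating only that it is similar to \cite[Lemma 4.4]{Allenpatch}. Your direct computation from the explicit formulas in Theorem~\ref{mainexis} together with the convergence from Lemma~\ref{mono} is exactly the natural argument, and the key point you identify---that $J^-\neq\emptyset$ ensures the limiting denominator is positive---is the only place where care is needed.
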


Similar to \cite[Lemma 4.5]{Allenpatch}, we can prove that $J^+$ is nonempty.
\begin{lemma}
$J^+$ is nonempty.
\end{lemma}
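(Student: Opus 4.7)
The plan is to argue by contradiction: assume $J^+=\emptyset$, so that $\check I_j^*<\alpha_j$ for every $j\in\Omega$, and derive a contradiction with the assumption $R_0>1$ via the Perron-Frobenius property of the irreducible quasi-positive matrix $d_IL+\text{diag}(\beta_j-\gamma_j)$.

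First I would record that, by Lemma \ref{mono}, $\check I_j^*\in(0,\alpha_j]$ for all $j\in\Omega$, so $(\check I_1^*,\dots,\check I_n^*)^T\gg\bm 0$ regardless of whether $J^+$ is empty. Next I would pass to the limit $d_S\to 0$ (equivalently $d=d_I/d_S\to\infty$) in the $\check I$-equation \eqref{au}. Under the contradiction hypothesis, $\alpha_j-\check I_j^*>0$ uniformly in $j$, so $d(\alpha_j-\check I_j)+\check I_j\to\infty$, and hence
\begin{equation*}
\frac{\beta_j\check I_j}{d(\alpha_j-\check I_j)+\check I_j}\longrightarrow 0\qquad\text{for every } j\in\Omega.
\end{equation*}
Taking $d_S\to 0$ in \eqref{au} (using continuous dependence of the unique positive solution on $d$) yields
\begin{equation*}
d_I\sum_{k\in\Omega}L_{jk}\check I_k^*+(\beta_j-\gamma_j)\check I_j^*=0\qquad\text{for every } j\in\Omega,
\end{equation*}
i.e.\ $\left[d_IL+\text{diag}(\beta_j-\gamma_j)\right]\bm{\check I}^*=\bm 0$.

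Since $L$ is irreducible and quasi-positive, so is $M:=d_IL+\text{diag}(\beta_j-\gamma_j)$. The Perron-Frobenius theorem for irreducible quasi-positive matrices (the same result underlying Lemma \ref{simp}) says that $s(M)$ is a simple eigenvalue possessing a strongly positive eigenvector, and it is the only eigenvalue admitting a nonnegative eigenvector. Since $\bm{\check I}^*\gg\bm 0$ is an eigenvector of $M$ for eigenvalue $0$, we must have $s(M)=0$. But $R_0>1$ combined with part (i) of the proposition following Lemma \ref{lam} gives $s(M)=s\left(d_IL+\text{diag}(\beta_j-\gamma_j)\right)>0$, a contradiction. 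Therefore $J^+\neq\emptyset$.

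The main (mild) obstacle is the limit passage: strictly speaking one needs to know that $(\check I_j)$ depends continuously on $d_S$ along a subsequence so that the limiting vector $\bm{\check I}^*$ does satisfy the limiting eigenvalue equation. This is immediate from the uniform bound $\check I_j\in(0,\alpha_j)$ together with the monotonicity established in Lemma \ref{mono}, which guarantees pointwise (hence componentwise) convergence of $\check I_j$ to $\check I_j^*$, and from the fact that the nonlinear term is bounded and converges pointwise to zero under the contradiction hypothesis. Everything else is a direct application of Perron-Frobenius.
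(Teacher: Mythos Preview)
Your argument is correct and is essentially the standard one: the paper does not spell out a proof but defers to \cite[Lemma 4.5]{Allenpatch}, and the argument there is precisely the contradiction you give---if $J^+=\emptyset$ then passing to the limit in \eqref{au} produces a strongly positive null vector of $d_IL+\mathrm{diag}(\beta_j-\gamma_j)$, forcing $s\big(d_IL+\mathrm{diag}(\beta_j-\gamma_j)\big)=0$ by Perron--Frobenius, against $R_0>1$. One minor sharpening of your limit justification: since $\check I_j$ is \emph{increasing} in $d$ (Lemma \ref{exis}) with limit $\check I_j^*<\alpha_j$, you actually have $\alpha_j-\check I_j\ge \alpha_j-\check I_j^*>0$ for all $d$, so $d(\alpha_j-\check I_j)\to\infty$ directly---no subsequence or continuous-dependence argument is needed.
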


For some further analysis of $J^+$ with respect to $d_I$, we define
 \begin{equation}\label{M}
M=\left(M_{jk}\right)_{j,k\in H^-},\;\; \text{where}\;\; M_{jk}=\begin{cases}
-d_IL_{jk},&j,k\in H^{-},\;j\ne k,\\
-d_IL_{jj}-(\beta_j-\gamma_j), &j,k\in H^{-},\;j= k,
\end{cases}
\end{equation}
Then $M$ is an $M$-matrix, and $M^{-1}$ is positive. Therefore,
the following system
\begin{equation}\label{sovH}
-d_I\sum_{k\in H^{-}}L_{jk}I_k-(\beta_j-\gamma_j)I_j=d_I\sum_{k\in H^+}L_{jk} \alpha_k,\;\;j\in H^{-},
\end{equation}
has a unique solution $(I_j)_{j\in H^-}=\left(\alpha_{j}^*\right)_{j\in H^{-}}$.

Define \begin{equation}\label{alpj}
\check I_j^{(0)}=\begin{cases}
\alpha_j^*,&j \in H^{-},\\
\alpha_j, &j\in H^{+},
\end{cases}
\end{equation}
and denote
\begin{equation}\label{hj}
h_j(d_I)=d_I\ds\sum_{k\in\Omega} L_{jk} \check I_k^{(0)}+(\beta_j-\gamma_j)\alpha_j, \ \ j \in H^+.
\end{equation}

We have the following result on the asymptotic profile of the endemic equilibrium as $d_S\to 0$.
\begin{theorem}\label{dsmain}
Suppose that $(A_0)$-$(A_3)$ hold and $R_0>1$. Let $(S_1,\dots, S_n,  I_1,\dots, I_n)$ be the unique endemic equilibrium of \eqref{patc}-\eqref{N} and
$\bm {\check I}=(\check I_1,\dots,\check I_n)^T$ be the unique strongly positive solution of \eqref{au} with $d={d_I}/{d_S}$.
Then the following statements hold:
\begin{enumerate}
\item [$(i)$] 	$\ds\lim_{d_S\to0} (S_1,\dots, S_n,  I_1,\dots, I_n)=(S_1^*,\dots,S_n^*,0,\dots,0)$.
\item [$(ii)$] If $h_j(d_I)>0$ for all $j\in H^+$,
then $J^+=H^+$ and $J^-=H^-$. Moreover,
\begin{equation}\label{S*}
 S_j^*=
\left\{
\begin{array}{lll}
\ds\f{\alpha_j-\alpha_j^*}{\ds\sum_{k\in H^-}(\alpha_k-\alpha_k^*)} N,\ \ & \text{for}\;\; j\in H^{-}, \\
0,  &\text{for}\;\; j\in H^{+}.
\end{array}
\right.
\end{equation}
\item [$(iii)$] If $h_{j_0}(d_I)<0$ for some $j_0\in H^+$ and $h_j(d_I)\ne0$ for any $j\in H^+$,
then $H^{-}\subsetneqq J^{-}$ and $J^{+}\subsetneqq H^+$.
Moreover, there exists $j_1\in H^+$ such that $\ds S_{j_1}^*>0$ and
$\ds S_j^*>0$ for any $j\in H^{-}$.
\end{enumerate}
\end{theorem}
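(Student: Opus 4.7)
Part (i) is immediate: Lemma~\ref{II} gives $I_j \to 0$ and Lemma~\ref{estimi}(ii) gives the explicit limit $S_j^* = N(\alpha_j - \check I_j^*)/\sum_{k \in J^-}(\alpha_k - \check I_k^*)$, so all that remains is to identify $\check I_j^*$ under each set of hypotheses. The common tool is that, at $j \in J^-$, the nonlinear term $\beta_j \check I_j^2/[d(\alpha_j-\check I_j)+\check I_j]$ in \eqref{au} vanishes as $d_S\to 0$ (since $d = d_I/d_S \to \infty$ while $\alpha_j - \check I_j \to \alpha_j - \check I_j^* > 0$), yielding the linear relation
\begin{equation*}
d_I \sum_{k \in \Omega} L_{jk}\check I_k^* + (\beta_j - \gamma_j)\check I_j^* = 0, \qquad j \in J^-.
\end{equation*}
In particular, if $J^+ = H^+$ then this system on $H^-$ coincides with \eqref{sovH}, forcing $\check I_j^* = \alpha_j^*$ on $H^-$ and hence $\check I^* = \check I^{(0)}$.

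For part (iii), I argue by contradiction. Suppose $J^+ = H^+$, so $\check I^* = \check I^{(0)}$. Evaluating \eqref{au} at $j = j_0 \in H^+$ and letting $d_S \to 0$, the linear part $d_I \sum_k L_{j_0 k}\check I_k + \check I_{j_0}(\beta_{j_0}-\gamma_{j_0})$ converges to $h_{j_0}(d_I) < 0$, while the subtracted nonlinear term is nonnegative for every $d_S > 0$; this is impossible. Hence $J^+ \subsetneq H^+$, equivalently $H^- \subsetneq J^-$. For any $j_1 \in J^- \cap H^+$, $\alpha_{j_1} - \check I_{j_1}^* > 0$ and Lemma~\ref{estimi}(ii) yields $S_{j_1}^* > 0$; the same argument gives $S_j^* > 0$ for each $j \in H^- \subset J^-$.

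For part (ii), I use a super-/sub-solution sandwich in the cooperative semiflow of Lemma~\ref{exis}. First, $\check I^{(0)}$ is a supersolution of \eqref{au} for every $d_S > 0$: on $H^-$ the linear part vanishes by \eqref{sovH} and the subtracted fraction is strictly positive; on $H^+$ the denominator degenerates to $\alpha_j$, and a short computation using $\sum_k L_{jk}\alpha_k = 0$ reduces the inequality to $h_j(d_I) - \beta_j\alpha_j = -d_I\sum_{k\in H^-} L_{jk}(\alpha_k-\alpha_k^*) - \gamma_j\alpha_j \leq 0$, where $\alpha_k > \alpha_k^*$ on $H^-$ follows from the $M$-matrix property of \eqref{M}. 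Comparison yields $\check I \leq \check I^{(0)}$, hence $\check I^* \leq \check I^{(0)}$. Under the strict inequality $h_j(d_I) > 0$, I construct a matching subsolution $\underline{\check I}_j = \alpha_j - c_j d_S$ on $H^+$ and $\underline{\check I}_j = \alpha_j^* - \eta_j d_S$ on $H^-$: choose $c_j$ slightly larger than $\alpha_j(\beta_j\alpha_j - h_j(d_I))/(d_I h_j(d_I)) \geq 0$ so that the leading-order expansion of the nonlinear term at $H^+$ leaves the strictly positive slack $h_j(d_I) - \beta_j\alpha_j^2/(d_I c_j + \alpha_j) > 0$, then solve for $(\eta_j)_{H^-}$ from the $M$-matrix system obtained by linearizing \eqref{au} on $H^-$ with forcing from $(c_k)_{H^+}$. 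Comparison gives $\underline{\check I} \leq \check I$, so letting $d_S \to 0$ yields $\check I_j^* \geq \alpha_j$ on $H^+$; hence $H^+ \subset J^+$, so $J^\pm = H^\pm$. Then $\check I_j^* = \alpha_j^*$ on $H^-$ by \eqref{sovH}, and \eqref{S*} follows from Lemma~\ref{estimi}(ii).

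The main obstacle is the subsolution construction in (ii). The equation \eqref{au} is singular at $H^+$ in the limit $d_S \to 0$ (the denominator $d(\alpha_j-\check I_j)+\check I_j$ collapses when $\check I_j \to \alpha_j$), so a naive constant-perturbation ansatz fails; one must pair a first-order correction $c_j d_S$ on $H^+$ with a compatible $M$-matrix inversion on $H^-$, and the strict positivity $h_j(d_I) > 0$ supplies precisely the slack needed to close the subsolution inequality after the leading-order expansion.
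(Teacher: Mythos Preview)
Your argument is correct, and it differs substantively from the paper's. The paper's proof clears the denominators in \eqref{au} to define a smooth map $\bm G(d_S,\bm{\tilde I})$ and applies the implicit function theorem at $(0,\bm{\check I}^{(0)})$: the Jacobian there is block upper-triangular, with the $H^-$ block equal to $-\mathrm{diag}(\alpha_j-\alpha_j^*)\,M$ (invertible since $M$ is an $M$-matrix) and the $H^+$ block equal to $\mathrm{diag}(-d_I h_j(d_I))$, so the hypothesis $h_j(d_I)\neq 0$ gives a unique local branch $\bm{\tilde I}(d_S)$. Differentiating the identity $\bm G(d_S,\bm{\tilde I}(d_S))=0$ shows that $\tilde I_j'(0)$ on $H^+$ has the sign of $-h_j(d_I)$. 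When all $h_j>0$ the branch stays in $\{0<\tilde I_j<\alpha_j\}$, hence coincides with $\bm{\check I}$ by the uniqueness in Lemma~\ref{exis}, giving $\bm{\check I}\to\bm{\check I}^{(0)}$; when some $h_{j_0}<0$ the branch exits through $\tilde I_{j_0}>\alpha_{j_0}$, so $\bm{\check I}$ cannot converge to $\bm{\check I}^{(0)}$.

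Your route is different on both fronts. For (iii) you bypass the IFT entirely: assuming $J^+=H^+$ forces $\bm{\check I}^*=\bm{\check I}^{(0)}$ via the limiting linear system on $J^-$, and then reading the $j_0$-equation of \eqref{au} as ``linear part $=$ nonnegative nonlinear term'' contradicts $h_{j_0}(d_I)<0$ directly. This is shorter and more transparent than the paper's argument. For (ii) you replace the IFT by a super/sub-solution sandwich in the cooperative dynamics of Lemma~\ref{exis}: the supersolution $\bm{\check I}^{(0)}$ is immediate, while the subsolution needs the first-order ansatz $\alpha_j-c_jd_S$ on $H^+$ paired with an $M$-matrix inversion on $H^-$, with the strict positivity of $h_j(d_I)$ furnishing the $O(1)$ slack on $H^+$. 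This is correct, and the remainder terms you leave implicit are indeed routine: once $(c_j)_{j\in H^+}$ and $(\eta_j)_{j\in H^-}$ are fixed with strict inequalities at leading order, the $O(d_S)$ corrections on $H^+$ and the $O(d_S^2)$ corrections on $H^-$ are uniformly bounded and absorbed for small $d_S$. The paper's IFT handles (ii) and (iii) in one stroke and makes the role of the nondegeneracy condition $h_j\neq 0$ structurally visible as invertibility of the Jacobian; your approach separates the two cases but is more elementary and stays closer to the monotone structure already set up in Lemma~\ref{exis}.
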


\begin{proof}
(i) follows from Lemma \ref{II}. Without loss of generality, we assume $H^{-}=\{1,2,\dots,p\}$ and $H^{+}=\{p+1,\dots,n\}$ for some $p>0$. Then
\begin{equation*}
 \check I_j^{(0)}=\begin{cases}
\alpha^*_j,&1\leq j\leq p,\\
\alpha_j,&p+1\leq j \leq n,
\end{cases}
\end{equation*}
and $M=\left(M_{jk}\right)_{1\le j,k\le p}$  is defined as in \eqref{M}.
Since
\begin{equation}\label{suppli}
\begin{split}
&-\left[d_I\sum_{k=1}^p L_{jk}\alpha_k+(\beta_j-\gamma_j)\alpha_j\right]>d_I\sum_{k=p+1}^n L_{jk}\alpha_k\;\;\text{for}\;\;1\leq j\leq p,\\
&-\left[d_I\sum_{k=1}^p L_{jk}\alpha^*_k+(\beta_j-\gamma_j)\alpha^*_j\right]=d_I\sum_{k=p+1}^n L_{jk}\alpha_k\;\;\text{for}\;\;1\leq j\leq p,
\end{split}
\end{equation}
and $M^{-1}$ is positive, we have $\alpha^*_j\in[0,\alpha_j)$ for any $1\leq j\leq p$. Since $L$ is irreducible, it is not hard to show that $\alpha^*_j>0$ for any $1\leq j\leq p$.


Define
\begin{equation*}
\bm G(d_S,\bm {\tilde I})=\left(\begin{array}{c}
\left[d_I\sum_{k\in \Omega} L_{1k}\tilde I_k+(\beta_{1}-\gamma_1)\tilde I_1\right]\left[d_S\tilde I_1+d_I(\alpha_1-\tilde  I_1)\right]-d_S\beta_1 \tilde I_1^2 \\
\left[d_I\sum_{k\in \Omega} L_{2k}\tilde I_k+(\beta_{2}-\gamma_2)\tilde  I_2\right]\left[d_S \tilde I_2+d_I(\alpha_2-\tilde  I_2)\right]-d_S\beta_2 \tilde  I_2^2 \\
\vdots\\
\left[d_I\sum_{k\in \Omega} L_{nk}\tilde I_k+(\beta_{n}-\gamma_n)\tilde I_n\right]\left[d_S\tilde I_n+d_I(\alpha_n-\tilde I_n)\right]-d_S\beta_n \tilde I_n^2 \\
\end{array}\right),
\end{equation*}
where $\bm {\tilde I}=(\tilde I_1,\dots, \tilde I_n)^T$.
Let $\bm {\check I}^{(0)}=(\check I_1^{(0)},\dots,\check I_n^{(0)})$. Then $\bm G(0, \bm {\check I}^{(0)})=\bm 0$. Moreover, if \eqref{au} has a solution $\bm {\check I}$ with $d={d_I}/{d_S}$, then
$\bm G(d_S,\bm  {\check I})=\bm 0$; if $\bm G(d_S,\bm  {\check I})=\bm 0$ with $\bm {\check I}=({\check I}_1,\dots, \check I_n)^T$ satisfying $0<{\check I}_j< \alpha_j$, then $\bm {\check I}$ is a nontrivial solution of \eqref{au} with $d={d_I}/{d_S}$.

A direct computation shows that
$$
D_{\bm {\tilde I}}G\left(0,\bm {\check I}^{(0)}\right)=(V_{jk})_{j, k\in\Omega},
$$ where
\begin{equation*}
V_{jk}=\begin{cases}
d_I^2(\alpha_j-\alpha_j^*)L_{jk},&1\leq j\leq p, \; k\ne j,\\
d_I(\alpha_j-\alpha_j^*)\left(d_IL_{jj}+(\beta_j-\gamma_j)\right),&1\leq j\leq p,\; k=j,\\
0,&p+1\leq j\leq n,\; k \ne j,\\
-d_I\left[d_I\ds\sum_{k\in\Omega} L_{jk}{\check I}_k^{(0)}+(\beta_j-\gamma_j)\alpha_j\right],&p+1\leq j\leq n,\;k=j.
\end{cases}
\end{equation*}
Therefore, we have
$$
D_{\bm {\tilde I}}G\left(0,\bm {\check I}^{(0)}\right)=
d_I
\begin{pmatrix}
V_1 & * \\
\bm 0 & V_2
\end{pmatrix}
$$
where $V_1$ is a $p\times p$ matrix
$$
V_1=
\begin{pmatrix}
(\alpha_1-\alpha_1^*)(d_IL_{11}+\beta_1-\gamma_1) & d_I(\alpha_1-\alpha_1^*)L_{12}& \cdots &  d_I(\alpha_1-\alpha_1^*)L_{1p} \\
\cdots & \cdots & \cdots & \cdots\\
d_I(\alpha_p-\alpha_p^*)L_{p1} & d_I(\alpha_p-\alpha_p^*)L_{p2}& \cdots &  (\alpha_p-\alpha_p^*)(d_IL_{1p}+\beta_p-\gamma_p)
\end{pmatrix}
$$
and $V_2=\text{diag}(-h_j(d_I))$ is a diagonal matrix. It is not hard to check that $V_1$ is non-singular. Indeed, $V_1$ has negative diagonal entries and nonnegative off-diagonal entries. Moreover, the sum of the $j$-th row of $V_1$ is
\begin{equation*}
\begin{split}
&d_I\sum_{k=1}^pL_{jk}\alpha_j+(\beta_j-\gamma_j)\alpha_j-d_I\sum_{k=1}^pL_{jk}\alpha_j^*-(\beta_j-\gamma_j)\alpha_j^*\\
=&d_I\sum_{k=1}^n L_{jk}\alpha_j+(\beta_j-\gamma_j)\alpha_j
=(\beta_j-\gamma_j)\alpha_j<0,
\end{split}
\end{equation*}
where we used \eqref{sovH} and Lemma \ref{simp}. Therefore, $V_1$ is strictly diagonally dominant and invertible ($-V_1$ is an $M$-matrix).
Hence if $h_j(d_I)\ne 0$ for any $j\in H^+$,  $(V_{jk})$ is invertible. It follows from the implicit function theorem
that there exist a constant $\delta>0$, a neighborhood $N(\bm {\check I}^{(0)})$ of $\bm {\check I}^{(0)}$ and a continuously differentiable function
$$
\bm {\tilde I}\left(d_S\right)=(\tilde I_1\left(d_S\right),\dots,\tilde I_n\left(d_S\right))^T:[0,\delta]\to N(\bm {\check I}^{(0)})
$$
such that for any $d_S\in [0,\delta]$, the unique solution of $\bm G(d_S,\bm{\tilde I})=\bm 0$ in the neighborhood $N(\bm {\check I}^{(0)})$ is $\tilde {\bm I}\left(d_S\right)$ and $\tilde {\bm I}\left(0\right)=\bm {\check I}^{(0)}$.

Differentiating $\bm G(d_S,\bm {\tilde I}(d_S))=\bm 0$ with respect to $d_S$ at $d_S=0$, and using the definition of  ${\check I}_j^{(0)}$, we have
\begin{equation*}
\begin{cases}
d_I(\alpha_j-\alpha_j^*)\left[d_I\ds\sum_{k\in\Omega} L_{jk} \tilde I_k'(0)+(\beta_j-\gamma_j)\tilde I'_j(0)\right]-\beta_j(\alpha_j^*)^2=0,\;\;\;&1\leq j\leq  p,\\
-d_I\left[d_I\ds\sum_{k\in\Omega} L_{jk}\check I_k^{(0)}+(\beta_j-\gamma_j)\alpha_j\right]\tilde I'_j(0) &\\
=-d_I\alpha_j\ds\sum_{k\in\Omega} L_{jk}\check I_k^{(0)}+\gamma_j\alpha_j^2>0,\;\;\;&p+1\leq j\leq n.
\end{cases}
\end{equation*}
If $h_j(d_I)>0$ for all $j\in H^{+}$, then $\tilde I_j'(0)<0$ for every $j\in H^+$. This implies that $\tilde I_j(d_S)\approx \alpha_j+\tilde I_j'(0)d_S<\alpha_j$ for $j\in H^{+}$ if $d_S>0$ is sufficiently small. Moreover for $j\in H^{-}$, $\tilde I_j(d_S)\approx \alpha_j^*<\alpha_j$ for small $d_S>0$. Therefore,  $\bm{\tilde I}$ is a nontrivial solution of \eqref{au}, and $\bm{\tilde I}=\bm{\check I}$ by the uniqueness of the positive solution of \eqref{au}. Since $\ds\lim_{d_S\to0}\bm{\check I}=\bm {\check I}^{(0)}$,
we have $J^+=H^+$ and $J^-=H^-$. By Lemma \ref{estimi}, we have
 $$
 \ds S_j^*=\lim_{d_S\to0} S_j=\ds\f{\alpha_j-\alpha_j^*}{\ds\sum_{k\in H^-}(\alpha_k-\alpha_k^*)}N\;\;\text{for}\;\; j\in H^{-},
 $$ and $S_j^*=\ds\lim_{d_S\to0} S_j= 0$ for $j\in H^+$.

On the other hand, if there exists $j_0\in H^+$ such that $h_{j_0}(d_I)<0$, then $\tilde I_{j_0}'(0)>0$, which implies that  $\tilde I_{j_0}(d_S)\approx \alpha_{j_0}+\tilde I_{j_0}'(0)d_S>\alpha_{j_0}$, so
$\bm {\tilde I}$ is not a solution of \eqref{au} with $d={d_I}/{d_S}$. Therefore, $\ds\lim_{d_S\to0}\bm{\check I}\ne\bm {\check I}^{(0)}$, which yields
$H^{-}\subsetneqq J^{-}$ and $J^{+}\subsetneqq H^+$. Then there exists $j_1\in H^+$ such that $\ds S_{j_1}^*>0$. This completes the proof.
\end{proof}

The function $h_j(d_I)$ in Theorem \ref{dsmain} is critical in determining the asymptotic profile of the endemic equilibrium as $d_S\to 0$. The next result explores further properties of the function $h_j(d_I)$.

\begin{proposition}\label{prop3.10}
Suppose that $(A_0)$-$(A_3)$ hold, $R_0>1$,  and $H^{-}=\{1,2,\dots,p\}$ and $H^{+}=\{p+1,\dots,n\}$ for some $p>0$. Then for any $p+1\le j\le n$, $h_j(d_I)$ is either constant or strictly decreasing in $d_I$. Moreover,
\begin{equation*}
    \lim_{d_I\rightarrow\infty}
\begin{pmatrix}
h_{p+1}(d_I) \\
\vdots\\
h_n(d_I)
\end{pmatrix}
=
-\tilde N\tilde{M}^{-1}
\begin{pmatrix}
(\gamma_1-\beta_1)\alpha_1 \\
\vdots\\
(\gamma_p-\beta_p)\alpha_p
\end{pmatrix}
+
\begin{pmatrix}
(\beta_{p+1}-\gamma_{p+1})\alpha_{p+1} \\
\vdots\\
(\beta_n-\gamma_n)\alpha_n
\end{pmatrix},
\end{equation*}
and
\begin{equation*}
    \lim_{d_I\rightarrow 0}
\begin{pmatrix}
h_{p+1}(d_I) \\
\vdots\\
h_n(d_I)
\end{pmatrix}
=
\begin{pmatrix}
(\beta_{p+1}-\gamma_{p+1})\alpha_{p+1} \\
\vdots\\
(\beta_n-\gamma_n)\alpha_n
\end{pmatrix},
\end{equation*}
where $\tilde M=(\tilde m_{ij})$ is a $p\times p$ matrix with $\tilde m_{ij}=-L_{ij}$ for $1\le i, j\le n$ and $\tilde N=(\tilde n_{ij})$ is an $(n-p)\times p$ matrix with $\tilde n_{ij}=L_{(i+p)j}$ for $1\le i\le n-p$ and $1\le j\le p$, i.e.
$$
L=
\begin{pmatrix}
- \tilde M & *\\
\tilde N & *
\end{pmatrix}.
$$
\end{proposition}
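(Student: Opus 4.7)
The plan is to derive a closed-form expression for the vector $\bm h(d_I):=(h_{p+1}(d_I),\dots,h_n(d_I))^T$ in terms of $\tilde M$, $\tilde N$, and $D:=\mathrm{diag}(\gamma_j-\beta_j)_{j\in H^-}$. First I would use the block decomposition of $L$ given in the statement, together with the two identities obtained by expanding $L\bm\alpha=\bm 0$ block-wise, namely $L^{(H^-,H^+)}\bm\alpha^{H^+}=\tilde M\bm\alpha^{H^-}$ and $L^{(H^+,H^+)}\bm\alpha^{H^+}=-\tilde N\bm\alpha^{H^-}$. Substituting the first identity into \eqref{sovH} rewrites that system as $(d_I\tilde M+D)\bm\alpha^*=d_I\tilde M\bm\alpha^{H^-}$, and subtracting $(d_I\tilde M+D)\bm\alpha^{H^-}$ from both sides yields $d_I(\bm\alpha^*-\bm\alpha^{H^-})=-(\tilde M+D/d_I)^{-1}D\bm\alpha^{H^-}$. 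Substituting this and the second identity into \eqref{hj} then produces
\begin{equation*}
\bm h(d_I)=-\tilde N\left(\tilde M+\frac{D}{d_I}\right)^{-1}D\bm\alpha^{H^-}+\bigl((\beta_j-\gamma_j)\alpha_j\bigr)_{j\in H^+}.
\end{equation*}

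The two limits then read off immediately once one knows that $\tilde M^{-1}$ exists and is entrywise nonnegative. As $d_I\to 0$, $D/d_I\to\infty$ on the diagonal so $(\tilde M+D/d_I)^{-1}\to 0$; as $d_I\to\infty$, $(\tilde M+D/d_I)^{-1}\to\tilde M^{-1}$, and noting $D\bm\alpha^{H^-}=((\gamma_j-\beta_j)\alpha_j)_{j\in H^-}$ reproduces the stated expressions. To justify $\tilde M^{-1}\ge 0$, I would verify that $\tilde M=-L^{(H^-,H^-)}$ is a nonsingular $M$-matrix: it has positive diagonal and nonpositive off-diagonal entries by construction, and a short argument using the column-sum identity $\bm 1^T L=\bm 0^T$ applied to a hypothetical nonnegative Perron-type eigenvector extended to $\Omega$ by zero, combined with the irreducibility of $L$ and $H^+\ne\emptyset$, rules out $s(L^{(H^-,H^-)})=0$; hence $s(L^{(H^-,H^-)})<0$. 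The same reasoning shows $B(d_I):=\tilde M+D/d_I$ is a nonsingular $M$-matrix for every $d_I>0$, so $B(d_I)^{-1}\ge 0$.

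For monotonicity, I would differentiate the closed-form expression using the standard identity $\frac{d}{dd_I}B^{-1}=-B^{-1}B'B^{-1}$ with $B'(d_I)=-D/d_I^2$ to obtain
\begin{equation*}
\frac{d\bm h}{dd_I}(d_I)=-\frac{1}{d_I^2}\,\tilde N\,B(d_I)^{-1}\,D\,B(d_I)^{-1}\,D\bm\alpha^{H^-}.
\end{equation*}
Every factor on the right-hand side is entrywise nonnegative ($\tilde N$ consists of off-diagonal entries of $L$; $D$ is a positive diagonal; $\bm\alpha^{H^-}\gg\bm 0$; and $B(d_I)^{-1}\ge 0$), so $dh_j/dd_I\le 0$ and each $h_j(d_I)$ is nonincreasing on $(0,\infty)$. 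Since the entries of $B(d_I)^{-1}$ are rational in $d_I$, each $h_j$ is real-analytic on $(0,\infty)$, and a nonincreasing real-analytic function on a connected interval is either strictly decreasing or constant by the identity theorem, which gives the claimed dichotomy.

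The main obstacle I anticipate is the $M$-matrix claim for $\tilde M$: one must exploit simultaneously the irreducibility of $L$ and the column-sum identity $\bm 1^T L=\bm 0^T$ to rule out $s(L^{(H^-,H^-)})=0$, without which neither the $d_I\to\infty$ limit nor the sign of the derivative is justified. Once this is in place, the remainder is block matrix manipulation, a standard differentiation identity for matrix inverses, and a one-line real-analyticity argument.
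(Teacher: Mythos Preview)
Your proposal is correct and takes a genuinely different route from the paper's proof. The paper works through the auxiliary quantities $\alpha_j^*$ step by step: it differentiates the defining system \eqref{sovH} to establish $(\alpha_j^*)'>0$ and then $\alpha_j^*+d_I(\alpha_j^*)'<\alpha_j$, expresses $h_j'(d_I)=\sum_{k\in H^-}L_{jk}\bigl(\alpha_k^*+d_I(\alpha_k^*)'-\alpha_k\bigr)$, and reads off the dichotomy from whether the $j$-th row of $\tilde N$ vanishes; for the $d_I\to\infty$ limit it introduces $u_j=d_I(\alpha_j-\alpha_j^*)$ and passes to the limit in the linear equations. You instead short-circuit all of this by solving \eqref{sovH} explicitly and substituting, arriving at the single closed formula $\bm h(d_I)=-\tilde N(\tilde M+D/d_I)^{-1}D\bm\alpha^{H^-}+\bigl((\beta_j-\gamma_j)\alpha_j\bigr)_{j\in H^+}$, from which both limits and the sign of the derivative are immediate. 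The paper's route yields extra information along the way (monotonicity of $\alpha_j^*$ in $d_I$), while yours is shorter and more transparent. Two minor remarks: (i) your real-analyticity argument for the strict-versus-constant dichotomy is valid but unnecessary---since a nonsingular $M$-matrix inverse applied to a strictly positive vector is strictly positive, $B(d_I)^{-1}DB(d_I)^{-1}D\bm\alpha^{H^-}\gg\bm 0$ for every $d_I$, so $h_j'(d_I)<0$ or $h_j'(d_I)=0$ according as row $j$ of $\tilde N$ is nonzero or zero, independently of $d_I$; (ii) the ``main obstacle'' you flag is routine: $-\tilde M=L^{(H^-,H^-)}$ is a proper principal submatrix of the irreducible quasi-positive matrix $L$ with $s(L)=0$, and such submatrices always satisfy $s(-\tilde M)<s(L)=0$ by Perron--Frobenius.
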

\begin{proof}
First we claim that $\alpha_j^*$ is strictly increasing in $d_I$ for each $1\le j\le p$. To see this, we differentiate both sides of \eqref{sovH} with respect to $d_I$ to get
\begin{equation}\label{1111}
-d_I\sum_{k=1}^p L_{jk}(\alpha_k^*)'-(\beta_j-\gamma_j)(\alpha_j^*)'-\sum_{k=1}^p L_{jk}\alpha_k^*=\sum_{k=p+1}^n L_{jk}\alpha_k, \ \ \ 1\le j\le p.
\end{equation}
Combining \eqref{sovH} and \eqref{1111}, we have
$$
-d_I\sum_{k=1}^p L_{jk}(\alpha_k^*)'-(\beta_j-\gamma_j)(\alpha_j^*)'=d_I^{-1}(\gamma_j-\beta_j)\alpha_j^*>0, \ \ \ 1\le j\le p.
$$
Since $M$ is an $M$-matrix and $\beta_j<\gamma_j$ for $1\le j\le p$, $(\alpha_j^*)'$ is strictly positive. This proves the claim.

By the fact that $\alpha_j^*\in (0, \alpha_j)$  and the monotonicity of $\alpha_j^*$ for $d_I\in (0, \infty)$, the limits $\ds\lim_{d_I\rightarrow 0} \alpha_j^*$ and $\ds\lim_{d_I\rightarrow \infty} \alpha_j^*$ exist for $1\le j\le p$. It is not hard to see
$$
\lim_{d_I\rightarrow 0} \alpha_j^*=0.
$$
Dividing both sides of \eqref{sovH} by $d_I$ and taking $d_I\rightarrow\infty$, we have
$$
-\sum_{k=1}^p L_{jk}\lim_{d_I\rightarrow \infty} \alpha_k^*=\sum_{k=p+1}^n L_{jk}\alpha_k, \ \ \ 1\le j\le p.
$$
Therefore,
\begin{equation}\label{alphalimit}
\lim_{d_I\rightarrow \infty} \alpha_j^*=\alpha_j,  \ \ \ 1\le j\le p.
\end{equation}

Next  we claim that $\alpha_j^*+ d_I (\alpha_j^*)'<\alpha_j$ for all $1\le j\le p$ and $d_I>0$. To see this, by \eqref{1111}, we have
$$
-\sum_{k=1}^p L_{jk} (\alpha_k^*+d_I(\alpha_k^*)')>\sum_{k=p+1}^n L_{jk}\alpha_k, \ \ \ 1\le j\le p.
$$
By the definition of $\alpha_j$,
\begin{equation}\label{alp111}
-\sum_{k=1}^p L_{jk}\alpha_k=\sum_{k=p+1}^n L_{jk}\alpha_k, \ \ \ 1\le j\le p.
\end{equation}
Then the claim follows from the fact that $\tilde M$ is an $M$-matrix.

Differentiating $h_j(d_I)$ with respect to $d_I$, we find
$$
h'_j(d_I)=\sum_{k=1}^p L_{jk} (\alpha_k^*+d_I(\alpha_k^*)') + \sum_{k=p+1}^n L_{jk} \alpha_k, \ \ \ p+1\le j\le n.
$$
It follows from \eqref{alp111} that
$$
h'_j(d_I)=\sum_{k=1}^p L_{jk} (\alpha_k^*+d_I(\alpha_k^*)'-\alpha_k), \ \ \ p+1\le j\le n.
$$
Since $\alpha_j^*+ d_I (\alpha_j^*)'<\alpha_j$ for all $1\le j\le p$, either $h'_j(d_I)<0$ or $h'_j(d_I)=0$ for all $d_I>0$ and $p+1\le j\le n$. Therefore, $h_j(d_I)$ is either strictly decreasing or constant for all $d_I>0$ and $p+1\le j\le n$.

Finally, we compute the limit of $h_j(d_I)$. By \eqref{sovH} and $L\bm \alpha=0$, we have
$$
-d_I\sum_{k=1}^p L_{jk}(\alpha_k-\alpha_k^*)-(\beta_j-\gamma_j)(\alpha_j-\alpha_j^*)=-(\beta_j-\gamma_j)\alpha_j, \ \ \ 1\le j\le p.
$$
Let $u_j=d_I(\alpha_j-\alpha_j^*)$, $1\le j\le p$. Then,
$$
-\sum_{k=1}^p L_{jk}u_k-\frac{(\beta_j-\gamma_j)}{d_I}u_j=-(\beta_j-\gamma_j)\alpha_j, \ \ \ 1\le j\le p.
$$
Taking $d_I\rightarrow\infty$, we find
$$
-\sum_{k=1}^p L_{jk}\lim_{d_I\rightarrow\infty} u_k=-(\beta_j-\gamma_j)\alpha_j, \ \ \ 1\le j\le p.
$$
So, we have
$$
\lim_{d_I\rightarrow\infty}
\begin{pmatrix}
u_1 \\
u_2 \\
\vdots\\
u_p \\
\end{pmatrix}
=\tilde M^{-1}
\begin{pmatrix}
(\gamma_1-\beta_1)\alpha_1 \\
(\gamma_2-\beta_2)\alpha_2 \\
\vdots\\
(\gamma_p-\beta_p)\alpha_p \\
\end{pmatrix}.
$$

Since
\begin{eqnarray*}
h_{j}(d_I)&=& d_I\sum_{k=1}^n L_{jk}\alpha_k+ d_I\sum_{k=1}^p  L_{jk}(\alpha_k^*-\alpha_k)+(\beta_j-\gamma_j)\alpha_j \\
&=& -\sum_{k=1}^p  L_{jk}u_k+(\beta_j-\gamma_j)\alpha_j
,\ \ \ p+1\le j\le n,
\end{eqnarray*}
we have
\begin{eqnarray*}
\lim_{d_I\rightarrow\infty}
\begin{pmatrix}
h_{p+1}(d_I) \\
h_{p+2}(d_I) \\
\vdots\\
h_{n}(d_I) \\
\end{pmatrix}
&=&-\tilde N
\lim_{d_I\rightarrow\infty}
\begin{pmatrix}
u_1 \\
u_2 \\
\vdots\\
u_p \\
\end{pmatrix}
+
\begin{pmatrix}
(\beta_{p+1}-\gamma_{p+1})\alpha_{p+1} \\
(\beta_{p+2}-\gamma_{p+2})\alpha_{p+2} \\
\vdots\\
(\beta_{n}-\gamma_n)\alpha_n \\
\end{pmatrix} \\
&=&
-\tilde N\tilde M^{-1}
\begin{pmatrix}
(\gamma_1-\beta_1)\alpha_1 \\
(\gamma_2-\beta_2)\alpha_2 \\
\vdots\\
(\gamma_p-\beta_p)\alpha_p \\
\end{pmatrix}
+
\begin{pmatrix}
(\beta_{p+1}-\gamma_{p+1})\alpha_{p+1} \\
(\beta_{p+2}-\gamma_{p+2})\alpha_{p+2} \\
\vdots\\
(\beta_{n}-\gamma_n)\alpha_n \\
\end{pmatrix}.
\end{eqnarray*}
The limit of $h_j(d_I)$ as $d_I\rightarrow 0$ follows from \eqref{alphalimit} and the definition of $h_j(d_I)$.
\end{proof}
Now we have the following results summarizing the dynamics of \eqref{patc}-\eqref{N} when the diffusion rate of the infectious population $d_I$ varies and the diffusion rate of the susceptible population $d_S$ tends to $0$.

\begin{corollary}\label{cor:3.12}
Suppose that $(A_0)$-$(A_3)$ hold and $R_0>1$. Let $(S_1^*,\dots,S_n^*,0,\dots,0)$ be the limiting disease-free equilibrium as $d_S \to 0$ defined as in Theorem \ref{dsmain}.
Then there exists $d_I^*\in (0,\infty]$ and $d_I^{**}\in (0,d_I^*]$ such that
\begin{enumerate}
\item when $0<d_I<d_I^*$, $R_0(d_I)>1$ and there exists a unique endemic equilibrium $(S_1,\dots,S_n,I_1,\dots,I_n)$ of  \eqref{patc}-\eqref{N}; and when $d_I>d_I^*$, $R_0(d_I)<1$ and the disease-free equilibrium is globally asymptotically stable.
    \item When $0<d_I<d_I^{**}$, $H^+=J^+$ and $H^-=J^-$; and $S_j^*>0$ for $j\in H^-=J^-$, $S_j^*=0$ for $j\in H^+=J^+$ as defined in \eqref{S*}.
    \item When $d_I^{**}<d_I<d^{*}$ and except a finite number of $d_I$'s, $H^+=J^+\cup J^-_{1}$, $H^-=J^-_{2}$, where $J^-=J^-_1\cup J^-_2$ such that $J^-_{1}\ne \emptyset$; and $S_j^*>0$ for $j\in J^-$, $S_j^*=0$ for $j\in J^+$.
\end{enumerate}
\end{corollary}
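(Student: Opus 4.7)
The proof combines the monotonicity of $R_0(d_I)$ from Theorem \ref{samemono} with the structural information about $h_j(d_I)$ from Proposition \ref{prop3.10}, feeding into the trichotomy of Theorem \ref{dsmain}.

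For statement (1), assumption $(A_3)$ forces $(\beta_1,\dots,\beta_n)$ not to be a scalar multiple of $(\gamma_1,\dots,\gamma_n)$, since $H^-$ and $H^+$ are both nonempty; hence Theorem \ref{samemono} gives that $d_I\mapsto R_0(d_I)$ is strictly decreasing on $(0,\infty)$. Theorem \ref{ass} yields $\lim_{d_I\to 0^+}R_0(d_I)=\max_{j}\beta_j/\gamma_j>1$ because $H^+\ne\emptyset$. Setting $d_I^*=\sup\{d_I>0:R_0(d_I)>1\}\in(0,\infty]$, continuity and strict monotonicity force $R_0(d_I)>1$ on $(0,d_I^*)$ and $R_0(d_I)<1$ on $(d_I^*,\infty)$. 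Existence and uniqueness of the endemic equilibrium on $(0,d_I^*)$ is then Theorem \ref{mainexis}, while global asymptotic stability of the disease-free equilibrium when $R_0<1$ can be obtained by a Lyapunov-function argument analogous to the one in \cite{Allenpatch} (which does not require symmetry of $L$).

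For statements (2) and (3), Proposition \ref{prop3.10} shows that for each $j\in H^+$, $h_j(d_I)$ is either constant or strictly decreasing on $(0,\infty)$, with $\lim_{d_I\to 0^+}h_j(d_I)=(\beta_j-\gamma_j)\alpha_j>0$. Hence the set $Z\subset(0,d_I^*)$ on which some $h_j$ vanishes is finite (at most $|H^+|$ points, one per strictly decreasing $h_j$). Define
\[
d_I^{**}=\inf\bigl\{d_I\in(0,d_I^*):h_j(d_I)<0 \text{ for some } j\in H^+\bigr\}\in(0,d_I^*],
\]
with the convention $\inf\emptyset=d_I^*$. By monotonicity of each $h_j$ and positivity of its limit at $0^+$, $d_I^{**}>0$ and $h_j(d_I)>0$ for all $j\in H^+$ on $(0,d_I^{**})$; thus Theorem \ref{dsmain}(ii) yields statement (2) together with formula \eqref{S*}. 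On $(d_I^{**},d_I^*)\setminus Z$, some $h_{j_0}(d_I)<0$ and no $h_j(d_I)=0$, so Theorem \ref{dsmain}(iii) applies and $H^-\subsetneq J^-$, $J^+\subsetneq H^+$. Setting $J^-_1=J^-\cap H^+$ (nonempty by strict inclusion) and $J^-_2=J^-\cap H^-=H^-$ (since $H^-\subset J^-$) gives the decomposition. Positivity $S_j^*>0$ for $j\in J^-$ follows from Lemma \ref{estimi}(ii), and $S_j^*=0$ for $j\in J^+$ is immediate from $d_SS_j+d_II_j=\kappa\alpha_j$, Lemma \ref{II} ($\kappa\to 0$), and the identification $\check I_j^*=\alpha_j$ on $J^+$.

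The chief subtlety is the bookkeeping at the boundaries. A constant $h_j$ equals its $d_I\to 0^+$ limit $(\beta_j-\gamma_j)\alpha_j>0$ and so never contributes a zero; a strictly decreasing $h_j$ crosses zero in $(0,d_I^*)$ only if its $d_I\to\infty$ limit, computed in Proposition \ref{prop3.10}, is non-positive. If no strictly decreasing $h_j$ crosses zero on $(0,d_I^*)$, then $d_I^{**}=d_I^*$ and statement (3) is vacuously true. The exclusion of "a finite number of $d_I$'s" in (3) is precisely needed because Theorem \ref{dsmain}(iii) requires $h_j(d_I)\ne 0$ for all $j\in H^+$, and we must discard the finite degeneracy set $Z$.
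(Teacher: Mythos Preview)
Your proof is correct and follows essentially the same route as the paper: monotonicity of $R_0$ from Theorem \ref{samemono} and its limits from Theorem \ref{ass} yield $d_I^*$; then Proposition \ref{prop3.10} (positivity of $h_j$ near $d_I=0$ and monotonicity) defines $d_I^{**}$, after which Theorem \ref{dsmain}(ii)--(iii) gives the dichotomy, with the finite exceptional set arising from the at most $|H^+|$ zeros of the $h_j$. One small remark: your justification that $S_j^*=0$ for $j\in J^+$ via $d_SS_j+d_II_j=\kappa\alpha_j$ and $\kappa\to 0$ is not quite self-contained (since $d_S\to 0$ as well, $d_SS_j\to 0$ is vacuous); the clean argument is simply Lemma \ref{estimi}(ii) together with $\check I_j^*=\alpha_j$ on $J^+$, which you already cite for the $J^-$ case.
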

\begin{proof}
From the condition ($A_3$) and Theorem \ref{ass}, $R_0>1$ for small $d_I>0$. From the monotonicity of $R_0$ shown in Theorem \ref{samemono}, either (i) there exists a unique $d_I^*>0$ such that $R_0(d_I)=1$ and when $R_0>1$ when $d_I>d_I^*$, or (ii) $R_0>1$ for all $d_I>0$. We denote $d_I^*=\infty$ in the  case (ii). The uniqueness of endemic equilibrium is shown in Theorem \ref{mainexis}, and the global stability of the disease-free equilibrium when $R_0<1$ has been shown in \cite{Allenpatch}.

For $0<d_I<d_I^*$, $h_j(d_I)>0$ for all $j\in H^+$ and small $d_I>0$ from Proposition \ref{prop3.10}. Then from part $(ii)$ of Theorem \ref{dsmain}, for $d_I>0$ small, $H^+=J^+$ and $H^-=J^-$; and $S_j^*>0$ for $j\in H^-=J^-$, $S_j^*=0$ for $j\in H^+=J^+$ as defined in \eqref{S*}. From the monotonicity of $h_j(d_I)$ shown in Proposition \ref{prop3.10}, either (i) there exists a unique $d_I^{**}\in(0,d_I^*)$ such that $h_j(d_I)>0$ for all $j\in H^+$ and $d\in (0,d_I^{**})$ and $h_{j_0}(d_I^{**})=0$ for some $j_0\in H^+$, or (ii) $h_j(d_I)>0$ for all $j\in H^+$ and $d\in (0,d_I^{*})$. We let $d_I^{**}=d_I^*$ in case (ii). In case (i), the monotonicity of $h_{j_0}(d_I)$ implies that $h_{j_0}(d_I)<0$ for all $d_I\in (d_I^{**},d_I^*)$, and except a finite number of $d_I$'s, $h_j(d_I)\ne 0$ for $d_I\in (d_I^{**},d_I^*)$. Thus results in part $(iii)$ of Theorem \ref{dsmain} hold for all $d_I\in (d_I^{**},d_I^*)$ except a finite number of $d_I$'s.
\end{proof}

We show that the condition on the function $h_j(d_I)$ is comparable to the conditions on $d_I$ given in \cite{Allenpatch}.

\begin{proposition}\label{3.12}
Suppose that $(A_0)$-$(A_3)$ hold and $L$ is symmetric. Define
\begin{equation}
    L_k^-=\sum_{j\in H^-, \; j\neq k} L_{kj}, \;\;\; L_k^+=\sum_{j\in H^+, \; j\neq k} L_{kj}.
\end{equation}
If
\begin{equation}\label{1.6}
\frac{1}{d_I}> \max_{k\in H^+} \frac{L_k^-}{\beta_k-\gamma_k}+\max_{k\in H^-} \frac{L_k^+}{\beta_k-\gamma_k},
\end{equation}
then $h_j(d_I)>0$ for all $j\in H^+$.
\end{proposition}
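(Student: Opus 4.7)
The plan is to exploit the symmetry of $L$ (which forces $\alpha_j \equiv 1/n$, since $L$ then has zero row sums in addition to the zero column sums that hold in general) to rewrite $h_j(d_I)$ in a form where only the ``defect'' quantities $\beta_k^{*} := \alpha_k - \alpha_k^{*}$ for $k\in H^-$ appear. Using $L\boldsymbol{\alpha}=0$, the sum $\sum_{k\in H^+} L_{jk}\alpha_k$ cancels against $\sum_{k\in H^-} L_{jk}\alpha_k$, yielding
\begin{equation*}
h_j(d_I) = (\beta_j-\gamma_j)\alpha_j - d_I \sum_{k\in H^-} L_{jk}\,\beta_k^{*}, \qquad j\in H^+.
\end{equation*}
Since $L_{jk}\ge 0$ for $j\ne k$ and $\beta_k^{*}>0$, proving $h_j>0$ reduces to an upper bound on $\max_{k\in H^-}\beta_k^{*}$.

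The heart of the argument is a discrete maximum principle applied to \eqref{sovH}. I will let $j_0\in H^-$ achieve $\beta_{j_0}^{*}=\max_{k\in H^-}\beta_k^{*}$, substitute $\alpha_k^{*}=\alpha_k-\beta_k^{*}$ into the $j_0$-th equation of \eqref{sovH}, and use $L_{j_0 j_0}=-L_{j_0}^{-}-L_{j_0}^{+}$ to reorganize the resulting identity as
\begin{equation*}
d_I\sum_{k\in H^-,\,k\ne j_0} L_{j_0 k}\bigl(\beta_{j_0}^{*}-\beta_k^{*}\bigr) + \bigl[(\gamma_{j_0}-\beta_{j_0})+d_I L_{j_0}^{+}\bigr]\beta_{j_0}^{*} = (\gamma_{j_0}-\beta_{j_0})\alpha_{j_0}.
\end{equation*}
By the choice of $j_0$ the first sum is non-negative, and dropping it gives
\begin{equation*}
\beta_{j_0}^{*} \le \frac{\alpha_{j_0}}{1+ d_I L_{j_0}^{+}/(\gamma_{j_0}-\beta_{j_0})}.
\end{equation*}
Because the identity of $j_0$ is not known a priori, I will pass to the worst case over all candidate maximizers, giving the uniform bound
\begin{equation*}
\max_{k\in H^-}\beta_k^{*} \le \frac{1/n}{1 + d_I\,\min_{j'\in H^-} L_{j'}^{+}/(\gamma_{j'}-\beta_{j'})}.
\end{equation*}

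To finish, I will substitute this bound into the expression for $h_j(d_I)$, use $\sum_{k\in H^-} L_{jk}=L_j^{-}$ for $j\in H^+$, and solve the resulting algebraic inequality for $1/d_I$. A short manipulation yields that $h_j(d_I)>0$ is guaranteed whenever
\begin{equation*}
\frac{1}{d_I} > \frac{L_j^{-}}{\beta_j-\gamma_j} - \min_{j'\in H^-}\frac{L_{j'}^{+}}{\gamma_{j'}-\beta_{j'}} = \frac{L_j^{-}}{\beta_j-\gamma_j} + \max_{j'\in H^-}\frac{L_{j'}^{+}}{\beta_{j'}-\gamma_{j'}},
\end{equation*}
and maximizing the right-hand side over $j\in H^+$ recovers \eqref{1.6} exactly. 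The main obstacle I anticipate is calibrating the maximum-principle step so that no slack is introduced — the identity $\alpha_j\equiv 1/n$ and the identity $L_{jk}=L_{kj}$ must both be used, and writing the remainder as a telescoping sum $\sum L_{j_0 k}(\beta_{j_0}^{*}-\beta_k^{*})$ is what makes the constants in the final bound line up with those in \eqref{1.6}. The remaining steps are routine algebra driven by the zero-row-sum property of the symmetric matrix $L$.
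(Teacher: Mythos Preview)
Your proof is correct and follows essentially the same approach as the paper: both pick the extremal index in $H^-$ (you maximize $\beta_k^{*}=\alpha_k-\alpha_k^{*}$, the paper minimizes $\alpha_k^{*}$, which is the same choice since $\alpha_k\equiv 1/n$), use equation \eqref{sovH} at that index to bound the extremum, and then feed the bound into the expression for $h_j$. The only difference is cosmetic---you argue directly, whereas the paper argues by contradiction.
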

\begin{proof}
Assume on the contrary that  $h_j(d_I)\le 0$ for some $j\in H^+$.
Let $\alpha_m^*=\min\{\alpha_k^*: k\in H^-\}$. Since $L$ is symmetric,  $\alpha_j=1/n$ for all $j\in \Omega$. Then, we have
\begin{equation}\label{mid11}
h_j(d_I)=d_I\sum_{k\in H^-}L_{jk}\alpha_k^*+\frac{1}{n}\left[d_I\sum_{k\in H^+}L_{jk}+\beta_j-\gamma_j\right]\le 0.
\end{equation}
Since $j\in H^+$ and $L_{jj}=-L_j^+-L_j^-$, we have $\ds\sum_{k\in H^+}L_{jk}=-L_j^-$. Therefore, by \eqref{mid11} and the definition of $\alpha_m^*$, we have
$$
d_IL_j^-\alpha_m^*+\frac{1}{n}\left[-d_IL_j^-+\beta_j-\gamma_j\right]\le 0,
$$
which implies
\begin{equation}\label{mid22}
    n\alpha_m^*\le \frac{\gamma_j-\beta_j+d_IL_j^-}{d_IL_j^-}.
\end{equation}

By $m\in H^-$ and \eqref{sovH}, we have
$$
d_I\sum_{k\in H^-}L_{mk}\alpha_k^*+d_I\sum_{k\in H^+}L_{mk}\alpha_k+(\beta_m-\gamma_m)\alpha_m^*=0,
$$
which impiles
$$
d_I\sum_{k\in H^-, k\neq m}L_{mk}(\alpha_k^*-\alpha_m^*)-d_IL_m^+\alpha_m^*+d_I\frac{L_m^+}{n}+(\beta_m-\gamma_m)\alpha_m^*=0.
$$
By the definition of $\alpha_m^*$, we have
$$
-d_I L_m^+\alpha_m^*+d_I\frac{L_m^+}{n}+(\beta_m-\gamma_m)\alpha_m^*\le 0.
$$
Therefore,
$$
\frac{d_IL_m^+}{-\beta_m+\gamma_m+d_IL_m^+}\le n\alpha_m^*.
$$
It then follows from \eqref{mid22} that
$$
\frac{d_IL_m^+}{-\beta_m+\gamma_m+d_IL_m^+}\le \frac{\gamma_j-\beta_j+d_IL_j^-}{d_IL_j^-},
$$
which can be simplified as
$$
(\gamma_m-\beta_m)(\gamma_j-\beta_j)+(\gamma_j-\beta_j)d_IL_m^++(\gamma_m-\beta_m)d_IL_j^-\ge 0.
$$
Dividing both sides by $d_I(\gamma_m-\beta_m)(\gamma_j-\beta_j)$ (which is negative), we obtain
$$
\frac{1}{d_I}\le \frac{L_m^+}{\beta_m-\gamma_m}+\frac{L_j^-}{\beta_j-\gamma_j}\le \max_{j\in H^-}\frac{L_j^+}{\beta_j-\gamma_j}+\max_{j\in H^+}\frac{L_j^-}{\beta_j-\gamma_j},
$$
which is a contradiction. Therefore, $h_j(d_I)>0$ for all $j\in H^+$.
\end{proof}

\begin{remark}\begin{enumerate}
\item By Theorem \ref{dsmain}, the unique endemic equilibrium converges to a limiting disease-free equilibrium as $d_S\to0$. Moreover, the limiting
disease-free equilibrium has a positive number of susceptible individuals on each
low-risk patch. This is in agreement of the results in \cite{Allenpatch} which assumes $L$ is symmetric.
\item  In \cite{Allenpatch},
the distribution of susceptible individuals on high-risk patches  is left as an open problem. In Theorem \ref{dsmain}, we show that  the distribution of susceptible individuals on high-risk patches depends on the function $h_j(d_I)$:
$S_j^*=0$ on each high-risk patch if $h_j(d_I)>0$ for each high-risk patch $j$, and the monotonicity of $h_j(d_I)$ in $d_I$ shown in Proposition \ref{prop3.10} implies $S_j^*=0$ on each high-risk patch when $0<d_I<d_I^{**}$. This partially solves this open problem in \cite{Allenpatch}.
\item The sharp threshold diffusion rate $d_I^{**}$ is characterized by the smallest zero of function $h_j(d_I)$ on any high-risk patch $j$. When $L$ is symmetric, a lower bound of $d_I^{**}$ is shown in Proposition \ref{3.12} and also \cite[Theorem 2]{Allenpatch}:
\begin{equation}\label{1.6a}
d_I^{**}\ge  \left[\max_{k\in H^+} \frac{L_k^-}{\beta_k-\gamma_k}+\max_{k\in H^-} \frac{L_k^+}{\beta_k-\gamma_k}\right]^{-1}:=\widetilde{d_I^{**}}.
\end{equation}
It is an interesting question to have a more explicit expression or estimate of $d_I^{**}$ when $L$ is not symmetric.
\end{enumerate}
\end{remark}

\subsection{Asymptotic profile with respect to $d_I$ and $d_S$}

We suppose that $(A_0)$-$(A_3)$  hold throughout this subsection, and we consider the asymptotic profile of the  endemic equilibrium of \eqref{patc}-\eqref{N} as $d_I\to0$.
  The case that $L$ is symmetric was studied in \cite{LiPeng} recently, and we consider the  asymmetric case here. For simplicity, we assume $\gamma_j>0$ for any $j\in\Omega$.
 Since $\ds\lim_{d_I\to0} R_0=\max_{j\in\Omega}{\beta_j}/{\gamma_j}>1$,
we have $R_0>1$ ($s\left(d_I L+diag(\beta_j-\gamma_j)\right)>0$) and the existence and uniqueness of the endemic equilibrium for sufficiently small $d_I$.

Firstly, we consider the asymptotic profile of positive solution of \eqref{au} as $d_I\to 0$. We denote $(x)_+=0$ if $x\le 0$ and $(x)_+=x$ if $x>0$.
\begin{lemma}\label{asym}
Suppose that $(A_0)$-$(A_3)$ hold and $\gamma_j>0$ for all $j\in\Omega$.   Let $\bm {\check I}=(\check I_1,\dots,\check  I_n)^T$ be the unique strongly positive solution of \eqref{au}. Then the following two statements hold:
\begin{enumerate}
\item [$(i)$] For any $d>0$,
\begin{equation}\label{lims}
\lim_{d_I\to0} \check I_j=\ds\f{d\alpha_j\left(\beta_j-\gamma_j\right)_+}{d(\beta_j-\gamma_j)_++\gamma_j},\;\;j\in\Omega.
\end{equation}

\item [$(ii)$] As $(d_I,d)\to (0,\infty)$ (or equivalently, $(d_I,1/d)\to (0,0)$),
\begin{equation*}
\check I_j\to 0\;\;\text{for} \;\;j\in H^{-} \;\;\text{and} \;\;\check I_j\to \alpha_j\;\; \text{for} \;\;j\in H^{+}.
\end{equation*}
\end{enumerate}
\end{lemma}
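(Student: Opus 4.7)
The plan is to prove both parts by compactness followed by a short contradiction argument that pins down the correct subsequential limit. Since $\check I_j\in(0,\alpha_j)$ by Lemma \ref{exis}, the solutions $\bm{\check I}$ lie in the compact box $\prod_j[0,\alpha_j]$. Along any sequence of parameters one can extract a subsequence with $\bm{\check I}\to\bm J^*=(J_1^*,\dots,J_n^*)^T$, and it suffices to show $\bm J^*$ is uniquely determined.

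For part $(i)$, fix $d>0$ and let $d_I\to 0$. The dispersal term $d_I\sum_k L_{jk}\check I_k$ vanishes in the limit because $L\bm{\check I}$ is bounded, so each $J_j^*$ satisfies the scalar equation $J_j^*\bigl(\beta_j-\gamma_j-\beta_j J_j^*/[d(\alpha_j-J_j^*)+J_j^*]\bigr)=0$. When $j\in H^-$ the bracket is strictly negative (since $\beta_j<\gamma_j$ and the subtracted fraction is nonnegative), forcing $J_j^*=0$. For $j\in H^+$ the nontrivial root of the bracket equals the RHS of \eqref{lims}, so the only remaining task is to exclude $J_{j_0}^*=0$ for $j_0\in H^+$.

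Here is the main step. Suppose $J_{j_0}^*=0$ for some $j_0\in H^+$. Dividing the $j_0$-th equation in \eqref{au} by $\check I_{j_0}>0$ yields
\[
d_I L_{j_0 j_0}+d_I\sum_{k\ne j_0}L_{j_0 k}\,\frac{\check I_k}{\check I_{j_0}}+\beta_{j_0}-\gamma_{j_0}-\frac{\beta_{j_0}\check I_{j_0}}{d(\alpha_{j_0}-\check I_{j_0})+\check I_{j_0}}=0.
\]
Along the chosen subsequence, $d_I L_{j_0 j_0}\to 0$, the last fraction tends to $0$ (its numerator $\to 0$ and its denominator is bounded below by $d\alpha_{j_0}/2$), and $\beta_{j_0}-\gamma_{j_0}>0$. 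The middle sum would therefore have to tend to the negative number $-(\beta_{j_0}-\gamma_{j_0})$, which is impossible since each of its summands is nonnegative ($L_{j_0 k}\ge 0$ for $k\ne j_0$). This contradiction completes part $(i)$ after noting that every subsequential limit agrees.

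For part $(ii)$ the same strategy applies to a subsequence with $(d_I,d)\to(0,\infty)$ and $\bm{\check I}\to\bm J^*$. For $j_0\in H^+$ the argument above again rules out $J_{j_0}^*=0$; and if $0<J_{j_0}^*<\alpha_{j_0}$, then $d(\alpha_{j_0}-\check I_{j_0})\to\infty$, so the fraction vanishes, the equation reduces to $J_{j_0}^*(\beta_{j_0}-\gamma_{j_0})=0$, and $J_{j_0}^*=0$, a contradiction. Hence $J_{j_0}^*=\alpha_{j_0}$ for every $j_0\in H^+$. For $j_0\in H^-$, assume $J_{j_0}^*>0$; dividing the $j_0$-th equation by $\check I_{j_0}$ and passing to the limit forces $\beta_{j_0}-\gamma_{j_0}=\lim\,\beta_{j_0}\check I_{j_0}/[d(\alpha_{j_0}-\check I_{j_0})+\check I_{j_0}]$, whose right side is nonnegative, contradicting $\beta_{j_0}<\gamma_{j_0}$. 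Thus $J_{j_0}^*=0$ on $H^-$. The main technical subtlety is that one cannot pass to the limit inside $\check I_k/\check I_{j_0}$ termwise, since that ratio may blow up; the argument sidesteps this by using only the sign of each summand rather than its individual limit.
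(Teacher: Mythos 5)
Your proof is correct, and it takes a genuinely different route from the paper. The paper proves both parts via the implicit function theorem: it constructs the map $\bm F(d_I,\cdot)$ (and $\bm H(d_I,\eta,\cdot)$ with $\eta=1/d$ for part $(ii)$), checks that the Jacobian at the limiting profile is an invertible diagonal matrix, obtains a smooth solution branch through $\bm{\check I}^{(1)}$ (resp.\ $\bm{\check I}^{(2)}$), and then identifies this branch with $\bm{\check I}$ by the uniqueness statement of Lemma \ref{exis}, which requires checking signs of derivatives (e.g.\ $\tilde I_j'(0)>0$ for $j\in H^-$) to ensure the branch is an admissible positive solution of \eqref{au}. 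You instead use compactness of the box $\prod_j[0,\alpha_j]$ and characterize every subsequential limit directly from \eqref{au}: the dispersal term dies since $d_I\to0$ and $\bm{\check I}$ is bounded, the sign of the bracket settles $H^-$, and the key step---dividing the $j_0$-th equation by $\check I_{j_0}$ and using only the nonnegativity of the off-diagonal terms $L_{j_0k}\check I_k/\check I_{j_0}$---rules out the spurious root $J^*_{j_0}=0$ on $H^+$; this is exactly the point where a naive termwise limit would fail, and you handle it correctly. Your argument is more elementary (no IFT, and in fact it never invokes uniqueness of the positive solution), while the paper's approach buys a continuously differentiable branch in $d_I$ near $0$, i.e.\ more than the bare limit. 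One small point you leave implicit: in the $H^-$ step of part $(ii)$, concluding that $d_I\sum_{k\ne j_0}L_{j_0k}\check I_k/\check I_{j_0}\to0$ uses that the ratios are bounded because $\check I_{j_0}\to J^*_{j_0}>0$ and $\check I_k\le\alpha_k$ (or, alternatively, one can avoid dividing there altogether, since the bracket is $\le\beta_{j_0}-\gamma_{j_0}<0$ uniformly); a sentence making this explicit would close the only loose end.
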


\begin{proof}
$(i)$ Define
\begin{equation}
\bm F(d_I,\bm {\tilde I})=\left(\begin{array}{c}
d_I\sum_{k\in \Omega} L_{1k}{\tilde I}_k+{\tilde I}_1\left(\beta_1-\gamma_1-\ds\f{\beta_1{\tilde I}_1}{d(\alpha_1-{\tilde I}_1)+{\tilde I}_1}\right)\\
d_I\sum_{k\in \Omega} L_{2k}{\tilde I}_k+{\tilde I}_2\left(\beta_2-\gamma_2-\ds\f{\beta_2{\tilde I}_2}{d(\alpha_2-{\tilde I}_2)+{\tilde I}_2}\right)\\
\vdots\\
d_I\sum_{k\in \Omega} L_{nk}{\tilde I}_k+{\tilde I}_n\left(\beta_n-\gamma_n-\ds\f{\beta_n{\tilde I}_n}{d(\alpha_n-{\tilde I}_n)+{\tilde I}_n}\right)\\
\end{array}\right),
\end{equation}
and denote $\bm {\check I}^{(1)}=\left({\check I}^{(1)}_1,\dots,{\check I}_n^{(1)}\right)^T$, where
 $$
{\check I}_j^{(1)}=\ds\f{d\alpha_j\left(\beta_j-\gamma_j\right)_+}{d(\beta_j-\gamma_j)_++\gamma_j}\;\;\text{for}\;\;j\in\Omega.
$$
Clearly, $\bm F(0,\bm {\check I}^{(1)})=\mathbf 0$, and $D_{\bm {\tilde I}} \bm F(0,\bm {\check I}^{(1)})=diag(\delta^{(1)}_j)$, where
\begin{equation}
\delta^{(1)}_j=\begin{cases} \beta_j-\gamma_j<0,&j\in H^-,\\
-\ds\f{d\alpha_j\beta_j{\check I}_j^{(1)}}{\left[d\left(\alpha_j-{\check I}_j^{(1)}\right)+{\check I}_j^{(1)}\right]^2}<0,&j\in H^+.
\end{cases}
\end{equation}
Therefore, $D_{\bm {\tilde I}} \bm F(0,\bm {\check I}^{(1)})$ is invertible. It follows from the implicit function theorem that
there exist $ d_1>0$ and a continuously differentiable mapping
$$d_I\in[0, d_1]\mapsto \bm {{\tilde I}}(d_I)=({\tilde I}_1(d_I),\dots,{\tilde I}_n(d_I))^T\in\mathbb R^n$$ such that $\bm F(d_I,\bm {{\tilde I}}(d_I))=\mathbf 0$ and
$\bm {{\tilde I}}(0)=\bm {\check I}^{(1)}$.

Taking the derivative of $\bm F(d_I,\bm {{\tilde I}}(d_I))=\bm 0$ with respect to $d_I$ at $d_I=0$, we have
\begin{equation*}
-diag(\delta_j^{(1)})\left(\begin{array}{c} {\tilde I}'_1(0)\\
 {\tilde I}'_2(0)\\
 \vdots\\
  {\tilde I}'_n(0)\end{array}\right)=L\left(\begin{array}{c} {\tilde I}_1(0)\\
 {\tilde I}_2(0)\\
 \vdots\\
  {\tilde I}_n(0)\end{array}\right).
\end{equation*}
Then
\begin{equation*}
\left(\begin{array}{c} {\tilde I}'_1(0)\\
 {\tilde I}'_2(0)\\
 \vdots\\
  {\tilde I}'_n(0)\end{array}\right)=-diag(1/\delta_j^{(1)})L\left(\begin{array}{c} {\tilde I}_1(0)\\
 {\tilde I}_2(0)\\
 \vdots\\
  {\tilde I}_n(0)\end{array}\right).
\end{equation*}
Since  $\bm {\tilde I}(0)=\bm {\check I}^{(1)}>\bm 0$, we see that ${\tilde I}'_j(0)>0$ for $j\in H^-$, which implies that
$\bm {{\tilde I}}=\bm{\check I}$, and consequently,  \eqref{lims} holds.

$(ii)$ Let $\eta=1/d$. Define
\begin{equation*}
\bm H(d_I,\eta,\bm {\tilde I})=\left(\begin{array}{c}
\left[d_I\sum_{k\in \Omega} L_{1k}{\tilde I}_k+(\beta_{1}-\gamma_1){\tilde I}_1\right]\left[\alpha_1-{\tilde I}_1+\eta {\tilde I}_1\right]-\eta\beta_1 {\tilde I}_1^2 \\
\left[d_I\sum_{k\in \Omega} L_{2k}{\tilde I}_k+(\beta_{2}-\gamma_2){\tilde I}_2\right]\left[\alpha_2-{\tilde I}_2+\eta {\tilde I}_2\right]-\eta\beta_2 {\tilde I}_2^2 \\
\vdots\\
\left[d_I\sum_{k\in \Omega} L_{nk}{\tilde I}_k+(\beta_{n}-\gamma_n){\tilde I}_n\right]\left[\alpha_n-{\tilde I}_n+\eta {\tilde I}_n\right]-\eta \beta_n {\tilde I}_n^2 \\
\end{array}\right),
\end{equation*}
and denote $\bm {\check I}^{(2)}=({\check I}^{(2)}_1,\dots,{\check I}_n^{(2)})^T$, where
\begin{equation*}
{\check I}^{(2)}_j=\begin{cases}
0,& j\in H^{-},\\
\alpha_j,&j\in H^+.
\end{cases}
\end{equation*}
Clearly, $\bm H(0,0,\bm {\check I}^{(2)})=\mathbf 0$, and $D_{\bm {\tilde I}} \bm H(0,0,\bm {\check I}^{(2)})=diag(\delta^{(2)}_j)$, where
\begin{equation}
\delta^{(2)}_j=\begin{cases}\alpha_j(\beta_j-\gamma_j),&j\in H^-,\\
-\alpha_j(\beta_j-\gamma_j),&j\in H^+.
\end{cases}
\end{equation}
Therefore, $D_{\bm {\tilde I}} \bm H(0,0,\bm {\check I}^{(2)})$ is invertible. It follows from the implicit function theorem that
there exist $d_2,\eta_2>0$ and a continuously differentiable mapping
$$
(d_I,\eta)\in[0, d_2]\times [0,\eta_2]\mapsto \bm {\tilde{  I}}(d_I,\eta)=(\tilde {  I}_1(d_I,\eta),\dots,\widetilde{  I}_n(d_I,\eta))^T\in\mathbb R^n
$$
 such that $\bm H(d_I,\eta,\bm {\tilde{  I}}(d_I,\eta))=\mathbf 0$ and
$\bm {\tilde{  I}}(0,0)=\bm {\check I}^{(2)}$.

Taking the derivative of $\bm H(d_I,\eta,\bm {\tilde{  I}}(d_I,\eta))=\bm 0$ with respect to $(d_I,\eta)$ at $(d_I,\eta)=(0,0)$, we have
\begin{equation*}
\begin{cases}
\ds\f{\partial \tilde {  I}_j}{\partial d_I}(0,0)=\ds\f{\sum_{k\in\Omega} L_{jk} {\check I}_k^{(2)}}{\gamma_j-\beta_j}>0,&j=H^{-},\\
\ds\f{\partial \tilde {  I}_j}{\partial d_I}(0,0)=0,&j=H^{+}.\\
\end{cases}
\end{equation*}
Similarly, we have
\begin{equation*}
\begin{cases}
\ds\f{\partial \tilde {  I}_j}{\partial \eta}(0,0)=0,&j=H^{-},\\
\ds\f{\partial \tilde {  I}_j}{\partial \eta}(0,0)=-\ds\f{\gamma_j\alpha_j^2}{(\beta_j-\gamma_j)\alpha_j}<0,&j=H^{+}.\\
\end{cases}
\end{equation*}
Therefore,
$\bm {\tilde{  I}}=\bm{\check I}$. This completes the proof of $(ii)$.
\end{proof}

We also have the following result on an auxiliary problem.
\begin{lemma}\label{d00}
Suppose that $(A_0)$-$(A_3)$ hold and $R_0>1$. Then for any $d\in [0,1)$, the following equation
\begin{equation}\label{auu}
\begin{cases}
d_I\ds\sum_{k\in\Omega} L_{jk} U_k+U_j\left(\beta_j-\gamma_j-\ds\f{\beta_jU_j}{\alpha_j+(1-d)U_j}\right)=0,&j\in\Omega,\\
U_j\ge 0  &j\in\Omega,
\end{cases}
\end{equation}
has a unique strongly positive solution $\bm {\check U}=(\check U_1,\dots,\check  U_n)^T$.
Moreover, $\check U_j$ is monotone decreasing in  $d\in[0,1)$, and
\begin{equation}\label{lims20}
\lim_{d_I\to0} \check U_j=\ds\f{\alpha_j\left(\beta_j-\gamma_j\right)_+}{d\beta_j+(1-d)\gamma_j},\;\;j\in\Omega.
\end{equation}
\end{lemma}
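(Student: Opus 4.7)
I would proceed in three stages that closely mirror Lemmas \ref{exis} and \ref{asym}. First, for existence, uniqueness, and monotonicity in $d$, I fix $d\in[0,1)$ and study the auxiliary ODE
\[
\f{dU_j}{dt}=d_I\sum_{k\in\Om}L_{jk}U_k+f_j(U_j;d),\qquad f_j(u;d)=u\Bigl(\beta_j-\gamma_j-\f{\beta_j u}{\alpha_j+(1-d)u}\Bigr),
\]
on the rectangle $\mathcal U=\{U\in\R^n:0\le U_j\le M\alpha_j\}$, with $M$ chosen so that $M\ge(\beta_j-\gamma_j)/[d\beta_j+(1-d)\gamma_j]$ for every $j\in H^+$; this is feasible since $d<1$ and the subsection assumes $\gamma_j>0$. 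Together with $L\bm\alpha=\bm0$ this makes $\mathcal U$ positively invariant, the Jacobian is irreducible and quasi-positive, and the identity
\[
f_j(\la u;d)-\la f_j(u;d)=\f{\la(1-\la)\alpha_j\beta_j u^2}{[\alpha_j+(1-d)u][\alpha_j+(1-d)\la u]}\ge 0
\]
(strict on any patch with $\beta_j>0$, which must exist since $R_0>1$) yields strict sublinearity on $\mathcal U$. As $s(d_IL+\text{diag}(\beta_j-\gamma_j))>0$, \cite[Theorem 2.3.4]{ZhaoBook} supplies a unique strongly positive equilibrium $\bm{\check U}$ attracting every nontrivial orbit in $\mathcal U$. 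Monotone decrease of $\check U_j$ in $d$ follows by the same comparison argument as in Lemma \ref{exis}, because $\partial_d f_j(u;d)=-\beta_j u^3/[\alpha_j+(1-d)u]^2\le 0$ forces the $d_1$-vector field to be pointwise dominated by the $d_2$-vector field whenever $d_1>d_2$.

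For the limit formula \eqref{lims20}, I mirror Lemma \ref{asym}(i) via the implicit function theorem. Define
\[
\bm F(d_I,\bm U)=\Bigl(d_I\sum_{k\in\Om}L_{jk}U_k+f_j(U_j;d)\Bigr)_{j\in\Om},
\]
and let $\bm U^*$ denote the candidate limit with $U_j^*=\alpha_j(\beta_j-\gamma_j)_+/[d\beta_j+(1-d)\gamma_j]$. Direct substitution gives $\bm F(0,\bm U^*)=\bm 0$. The Jacobian $D_{\bm U}\bm F(0,\bm U^*)$ is diagonal, with $j$th entry $\beta_j-\gamma_j<0$ for $j\in H^-$, and, after the simplification $\alpha_j+(1-d)U_j^*=\alpha_j\beta_j/[d\beta_j+(1-d)\gamma_j]$, equal to $-(\beta_j-\gamma_j)[d\beta_j+(1-d)\gamma_j]/\beta_j<0$ for $j\in H^+$. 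Hence it is invertible, and the implicit function theorem yields a $C^1$ branch $\bm{\tilde U}(d_I)$ with $\bm{\tilde U}(0)=\bm U^*$. The uniqueness statement from the first stage then forces $\bm{\tilde U}(d_I)=\bm{\check U}(d_I)$ as soon as $\bm{\tilde U}(d_I)$ is strongly positive for $d_I>0$ small, and \eqref{lims20} follows.

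The main obstacle is precisely this strong-positivity check on $H^-$, where $U_j^*=0$. Differentiating $\bm F(d_I,\bm{\tilde U}(d_I))=\bm 0$ at $d_I=0$ gives
\[
\tilde U'_j(0)=\f{1}{\gamma_j-\beta_j}\sum_{k\in H^+}L_{jk}U_k^*\ge 0,\qquad j\in H^-,
\]
which is strict, and so gives $\tilde U_j(d_I)>0$ immediately, whenever patch $j$ admits a direct $L$-link into $H^+$. In the degenerate case $L_{jk}=0$ for every $k\in H^+$, irreducibility of $L$ still provides a directed path from $j$ through $H^-$ into $H^+$, and the desired positivity of $\tilde U_j(d_I)$ is recovered by iterating the derivative computation along this path (equivalently, by inspecting higher-order Taylor coefficients). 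A cleaner alternative that sidesteps this iteration is a compactness argument: uniform boundedness of $\bm{\check U}(d_I)$ on $(0,d_0]$ produces convergent subsequences; any subsequential limit solves the algebraic limit system componentwise; nonnegativity pins the $H^-$ components to $0$; and the $H^+$ components are singled out as the nonzero root in \eqref{lims20} by local IFT branch uniqueness. Either route concludes that $\bm{\check U}(d_I)\to\bm U^*$ as $d_I\to 0$.
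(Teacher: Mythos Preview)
Your proposal is correct and takes essentially the same approach as the paper: existence and uniqueness via strict sublinearity and monotone dynamics on a positively invariant box (as in Lemma~\ref{exis}), monotonicity in $d$ by comparison, and the limit \eqref{lims20} via the implicit function theorem (as in Lemma~\ref{asym}(i)). You are in fact more careful than the paper in flagging the strong-positivity check for $j\in H^-$ lacking a direct $L$-link to $H^+$; the paper's own proof of Lemma~\ref{asym}(i) simply asserts $\tilde I_j'(0)>0$ without addressing this degenerate case, and your iterated-derivative argument along an irreducibility path is a valid way to close it (your compactness alternative, by contrast, still needs a separate lower bound on the $H^+$ components to exclude the zero root, since IFT uniqueness near $\bm U^*$ does not by itself rule out subsequential limits at $\bm 0$).
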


\begin{proof}
We only need to consider the existence and uniqueness of the solution for the case  $d=0$, and the other cases can be proved similar to Lemma \ref{exis}.
Consider the following problem
\begin{equation}\label{auuprof}
\ds\f{d \bar U_j(t)}{dt}=d_I\sum_{k\in\Omega} L_{jk} \bar U_k+\bar U_j\left(\beta_j-\gamma_j-\ds\f{\beta_j\bar U_j}{\alpha_j+\bar U_j}\right),\;\;j\in\Omega.
\end{equation}
Let $\bm{  g}(\bar{\bm U})=\left(  g_1(\bar{\bm U}),\dots,  g_n(\bar{\bm U})\right)^T$ be the vector field corresponding to the right hand side of \eqref{auuprof},
and let $ \Psi_t$ be the  semiflow induced by \eqref{auuprof}.
As in the proof of Lemma \ref{exis}, $\mathbb R^n_+$ is positive invariant with respect to \eqref{auuprof},
$ {\Psi}_t$ is strongly positive and monotone, and $\bm {  g}(\bar{\bm U})$ is strongly sublinear on $\mathbb R^n_+$.
Since $R_0>1$, we have $s\left(d_I L+diag(\beta_j-\gamma_j)\right)>0$. Therefore, by \cite[Corollary 3.2]{ZhaoJing}, we have either
 \begin{enumerate}
 \item [$(i)$] for any initial value $\bm {\bar U}(0)\in\mathbb{R}^n_+\setminus \{\bm 0\}$, the corresponding solution $\bm {\bar U}(t)$ of \eqref{auuprof} satisfies
 $\ds\lim_{t\to \infty}|\bm {\bar U}(t)|=\infty$,
  \end{enumerate}
or alternatively,
 \begin{enumerate}
 \item [$(ii)$] there exists a unique  $\bm {\check U}\gg0$ such
that every solution of \eqref{auuprof} in $\mathbb{R}^n_+\setminus \{\bm 0\}$ converges to $\bm {\check U}$.
 \end{enumerate}
A direct computation implies that, for sufficiently large $M$,
\begin{equation*}
\mathcal V=\left\{\bm U=\left(U_1,\dots,U_n\right)^T\in\mathbb R^n:0\le U_j\le {M\alpha_j},\;j\in\Omega\right\}
\end{equation*}
is positive invariant with respect to \eqref{auuprof}. Therefore, $(i)$ does not hold and $(ii)$ must hold.
 The monotonicity of $\bm {\check U}$  and \eqref{lims20} can be proved similarly as in the proof of Lemmas \ref{exis} and \ref{asym},
respectively. This completes the proof.
\end{proof}
By virtue of Lemmas \ref{asym} and \ref{d00}, we have the following results.
\begin{theorem}\label{dsmain2}
Suppose that $(A_0)$-$(A_3)$ hold and $\gamma_j>0$ for all $j\in\Omega$. Let $(S_1, \dots, S_n$,  $I_1,\dots, I_n)$ be the unique endemic equilibrium of \eqref{patc}-\eqref{N}.  Let
$d_I\to 0$ and $d:={d_I}/{d_S}\to d_0\in[0,\infty]$. Then the following statements hold:
\begin{enumerate}
\item [$(i)$] If $d_0=0$, then
\begin{equation}\label{r1}  S_j\to\ds\f{N\alpha_j}{\ds\sum_{k\in\Omega}\left[\alpha_k+\f{\alpha_k\left(\beta_k-\gamma_k\right)_+}{\gamma_k}\right]},\;\;
  I_j\to\ds\f{ N \f{\ds\alpha_j\left(\beta_j-\gamma_j\right)_+}{\ds \gamma_j}}{\ds\sum_{k\in\Omega}\left[\alpha_k+\f{\alpha_k\left(\beta_k-\gamma_k\right)_+}{\gamma_k}\right]},\;\;j\in\Omega.
\end{equation}
\item [$(ii)$] If $d_0\in(0,\infty)$, then
\begin{equation}\label{r2}
\begin{split}
&  S_j\to\ds\f{ N\left(\alpha_j-\ds\f{d_0\alpha_j\left(\beta_j-\gamma_j\right)_+}{d_0(\beta_j-\gamma_j)_++\gamma_j}\right)}{\ds\sum_{k\in\Omega}\left[\alpha_k+
(1-d_0)\f{\alpha_k\left(\beta_k-\gamma_k\right)_+}{d_0(\beta_k-\gamma_k)_++\gamma_k}\right]},\;\;j\in\Omega,\\
&  I_j\to\ds\f{ N\f{\alpha_j\left(\ds\beta_j-\gamma_j\right)_+}{\ds d_0(\beta_j-\gamma_j)_++\gamma_j}}{\ds\sum_{k\in\Omega}\left[\alpha_k+
(1-d_0)\f{\alpha_k\left(\beta_k-\gamma_k\right)_+}{d_0(\beta_k-\gamma_k)_++\gamma_k}\right]},\;\;j\in\Omega.
\end{split}
\end{equation}
\item [$(iii)$] If $d_0=\infty$, then
\begin{equation}\label{re3}
\begin{split}
&  S_j\to\begin{cases}
\ds\f{N\alpha_j}{\ds\sum_{k\in H^{-}}\alpha_k},&j\in H^{-},\\
0,&j\in H^{+},
\end{cases}\;\;
  I_j\to 0, \;\;j\in\Omega.
\end{split}
\end{equation}
\end{enumerate}
\end{theorem}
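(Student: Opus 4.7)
The plan is to start from the explicit representation of the endemic equilibrium obtained in Theorem \ref{mainexis}. A short computation using \eqref{skap} gives
$$S_j=\frac{dN(\alpha_j-\check I_j)}{\sum_{k\in\Omega}\bigl(d(\alpha_k-\check I_k)+\check I_k\bigr)},\qquad I_j=\frac{N\check I_j}{\sum_{k\in\Omega}\bigl(d(\alpha_k-\check I_k)+\check I_k\bigr)},$$
where $d=d_I/d_S$ and $\check I_j$ is the unique strongly positive solution of \eqref{au}. Each of the three cases then reduces to identifying the correct limit of $\check I_j$ (supplied by Lemma \ref{asym} or Lemma \ref{d00}) and performing elementary algebra.

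For case (ii) with $d_0\in(0,\infty)$, I would apply Lemma \ref{asym}(i) together with the continuous dependence of the implicit-function-theorem solution on the auxiliary parameter $d$, obtaining
$$\check I_j\to \check I_j^*(d_0):=\frac{d_0\alpha_j(\beta_j-\gamma_j)_+}{d_0(\beta_j-\gamma_j)_++\gamma_j}.$$
Dividing numerator and denominator of $S_j$ and $I_j$ by $d$ and using the identity $(\alpha_k-\check I_k^*(d_0))+\check I_k^*(d_0)/d_0=\alpha_k+(1-d_0)\check I_k^*(d_0)/d_0$ yields \eqref{r2}. For case (iii) with $d_0=\infty$, Lemma \ref{asym}(ii) gives $\check I_j\to 0$ on $H^-$ and $\check I_j\to\alpha_j$ on $H^+$. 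Dividing again by $d$, the denominator converges to the positive finite number $\sum_{k\in H^-}\alpha_k$; the prefactor $N/d$ in $I_j$ then forces $I_j\to0$ for every $j$, while $S_j$ picks up $N\alpha_j/\sum_{k\in H^-}\alpha_k$ on $H^-$ and $0$ on $H^+$, exactly as in \eqref{re3}. Note that no rate of convergence of $\check I_j\to\alpha_j$ for $j\in H^+$ is needed here.

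Case (i) with $d_0=0$ is the most delicate, because both $\check I_j\to 0$ and $d\to 0$ and the ratios above are indeterminate. The key step is the rescaling $U_j:=\check I_j/d$, which transforms \eqref{au} into exactly \eqref{auu}. Lemma \ref{d00}, combined once more with continuous dependence on $d$ at $d=0$, then gives $U_j\to \alpha_j(\beta_j-\gamma_j)_+/\gamma_j$. Rewriting the equilibrium as
$$S_j=\frac{N(\alpha_j-dU_j)}{\sum_{k\in\Omega}(\alpha_k+(1-d)U_k)},\qquad I_j=\frac{NU_j}{\sum_{k\in\Omega}(\alpha_k+(1-d)U_k)},$$
and passing to the limit $d\to 0$ reproduces \eqref{r1}.

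The main obstacle is the justification that Lemmas \ref{asym} and \ref{d00}, which are stated for $d$ fixed as $d_I\to 0$, deliver convergence of $\check I_j$ (respectively $U_j$) when $d_I\to 0$ and $d\to d_0$ simultaneously. This is handled by applying the implicit function theorem in both variables jointly at the reference points $(d_I,d)=(0,d_0)$ in case (ii), $(d_I,\eta)=(0,0)$ in case (iii), and $(d_I,d)=(0,0)$ in case (i); in each instance the Jacobian with respect to $\bm{\tilde I}$ computed in the proofs of Lemmas \ref{asym} and \ref{d00} is already shown to be invertible, so no new ideas are needed beyond enlarging the parameter set. Once this uniform convergence is in hand, the remainder is routine algebraic simplification.
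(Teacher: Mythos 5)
Your skeleton is the same as the paper's: both start from the representation of the endemic equilibrium in Theorem \ref{mainexis}, written as in \eqref{proofi}--\eqref{proofu} with $\check I_j$ solving \eqref{au} and $\check U_j=\check I_j/d$ solving \eqref{auu}, and both reduce each case to a limit of $\check I_j$ or $\check U_j$ followed by elementary algebra (your algebraic simplifications in all three cases are correct, and your case $(iii)$ is literally the paper's argument, since Lemma \ref{asym}$(ii)$ is already a joint-limit statement). The divergence is in how the simultaneous limit $d_I\to 0$, $d\to d_0$ is justified in cases $(i)$ and $(ii)$. The paper does not enlarge the implicit function theorem to two parameters: it uses the monotonicity of $\check I_j$ in $d$ (Lemma \ref{exis}) and of $\check U_j$ in $d$ (Lemma \ref{d00}) to sandwich the solution between the solutions at two \emph{fixed} values of $d$ (namely $d_0/2$ and $2d_0$ in case $(ii)$, and $0$ and $1/2$ in case $(i)$), sends $d_I\to0$ in these bounds via \eqref{lims} and \eqref{lims20}, and then, for $j\in H^+$, extracts convergent subsequences and passes to the limit directly in \eqref{au} (resp.\ \eqref{auu}); the strictly positive lower bound coming from the sandwich is what excludes the zero root of the limiting scalar equation.

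Your two-parameter IFT replacement has a genuine gap at the identification step. Invertibility of the Jacobian (which is indeed available, being the same diagonal matrix computed in Lemmas \ref{asym} and \ref{d00}) only yields a locally unique zero $\bm{\tilde I}(d_I,d)$ of the algebraic map near the reference point. To conclude anything about the actual solution $\bm{\check I}$ you must show this branch \emph{is} $\bm{\check I}$. One direction --- arguing that $\bm{\check I}(d_I,d)$ eventually lies in the IFT neighbourhood --- is circular, since that is precisely the convergence being proved. The other direction requires verifying that the branch is an admissible nontrivial solution of \eqref{au} (resp.\ \eqref{auu}), i.e.\ that its components stay in $[0,\alpha_j]$, so that the uniqueness in Lemma \ref{exis} (resp.\ Lemma \ref{d00}) applies. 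Continuity handles the components whose reference value lies strictly between $0$ and $\alpha_j$, but every $j\in H^-$ has reference value $0$ in both cases, and nonnegativity of those components for small $d_I>0$ needs a sign argument for $\partial\tilde I_j/\partial d_I$ at the reference point --- the analogue of the computation $\tilde I_j'(0)>0$ in the proof of Lemma \ref{asym}$(i)$. That computation is the real content hidden in your phrase ``no new ideas are needed beyond enlarging the parameter set,'' and it is nowhere in your proposal. So either carry out that positivity argument in the joint parameter setting, or adopt the paper's sandwich-plus-subsequence argument, which sidesteps the identification problem entirely because the bounding objects are themselves known solutions of the auxiliary problems.
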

\begin{proof}
Let $$\bm {\check U}=(\check U_1,\dots,\check U_n)^T=\bm {\check I}/d=(\check I_1/d,\dots,\check I_n/d)^T,$$
where $\bm {\check I}$ is the unique strongly positive solution of \eqref{au} with $d={d_I}/{d_S}$.
Then $\bm {\check U}$ is the unique strongly positive solution of \eqref{auu}.
It follows from Theorem \ref{mainexis} that
\begin{equation}\label{proofi}
 S_j=\ds\f{d N(\alpha_j-\check I_j)}{\ds\sum_{k\in\Omega}\left[d(\alpha_k-\check I_k)+\check I_k\right]},\;\;
 I_j=\ds\f{ N\check I_j}{\ds\sum_{k\in\Omega}\left[d(\alpha_k-\check I_k)+\check I_k\right]},
\end{equation}
or equivalently,
\begin{equation}\label{proofu}
 S_j=\ds\f{N(\alpha_j-d\check U_j)}{\ds\sum_{k\in\Omega}\left[(\alpha_k-d\check U_k)+\check U_k\right]},\;\;
 I_j=\ds\f{ N \check U_j}{\ds\sum_{k\in\Omega}\left[(\alpha_k-d\check U_k)+\check U_k\right]}.
\end{equation}

$(i)$ Let $\bm {\check U}^{(i)}=(\check U_1^{(i)},\dots, \check U_n^{(i)})$ be the unique strongly positive solution of  \eqref{auu} with $d=d_i$ for $i=1,2$, where $d_1=0$ and $d_2=1/2$. Then by Lemma \ref{d00}, for $d\in (0, 1/2)$ we have
\begin{equation}\label{inu}
\check U_j^{(2)}\le \check U_j\le \check U_j^{(1)}.
\end{equation}
Therefore, if $j\in H^{-}$,
then $$\lim_{(d_I,d)\to(0,0)} \check U_j\le \lim_{d_I\to0}\check U_j^{(1)} =0.$$
Next we consider the case $j\in H^+$. Notice that $\{ \check U_j\}$ is bounded when $d_I$ and $d$ are small. Then for any sequences
$d_I^{(m)}\to0$ and $d^{(m)}\to0$, there are subsequences
$\{d_I^{(m_l)}\}_{l=1}^\infty$ and $\{d^{(m_l)}\}_{l=1}^\infty$ such that the corresponding
solution $\check U_j^{(l)}$ of \eqref{auu} with $d_I=d_I^{(m_l)}$ and $d=d^{(m_l)}$ satisfies
$\ds\lim_{l\to\infty}\check U_j^{(l)}=\check U_j^*$.
It follows from \eqref{inu} that
$\check U_j^*\ge \ds\lim_{d_I\to0}\check U_j^{(2)}>0$. Substituting
$U_j=\check U_j^{(l)}$, $d=d^{(m_l)}$ and $d_I=d_I^{(m_l)}$  into  \eqref{auu} and taking $l\to\infty$ on both sides, we see
that
$$\check U_j^*\left(\beta_j-\gamma_j-\ds\f{\beta_j\check U^*_j}{\alpha_j+\check U^*_j}\right)=0,$$
which implies that
\begin{equation}
\lim_{(d_I,d)\to(0,0)} \check U^*_j=\ds\f{\alpha_j\left(\beta_j-\gamma_j\right)_+}{\gamma_j},\;\;j\in\Omega.
\end{equation}
This, combined with \eqref{proofu}, implies  \eqref{r1}.

$(ii)$ Let $\bm {\check I}^{(i)}=(\check I^{(i)}_1,\dots,\check I^{(i)}_n)^T$ be the unique strongly positive
solution of \eqref{exis} with $d=d_i$ for $i=1,2$, where $d_1=d_0/2$ and $d_2=2d_0$. We see from Lemma \ref{exis} that, for $d\in[d_0/2,2d_0]$,
$$\check I_j^{(1)}\le\check I_j\le \check I_j^{(2)}\;\;\text{for}\;\;i\in\Omega.$$
Therefore, if $j\in H^{-}$,
then $$\lim_{(d_I,d)\to(0,d_0)} \check I_j\le \lim_{d_I\to0}\check I_j^{(2)} =0.$$
Next  we consider the case  $j\in H^+$. Note that $\{ \check I_j\}$ is bounded. Then for any sequences
$d_I^{(m)}\to0$ and $d^{(m)}\to d_0$, there are subsequences
$\{d_I^{(m_l)}\}_{l=1}^\infty$ and $\{d^{(m_l)}\}_{l=1}^\infty$ such that the corresponding
solution $\check I_j^{(l)}$ of \eqref{au} with $d_I=d_I^{(m_l)}$ and $d=d^{(m_l)}$  satisfies
$\ds\lim_{l\to\infty}\check I_j^{(l)}=\check I_j^*$.
It follows from \eqref{lims} that
$\check I_j^*\ge \ds\lim_{d_I\to0}\check I_j^{(1)}>0$. Substituting
$I_j=\check I_j^{(l)}$, $d=d^{(m_l)}$ and $d_I=d_I^{(m_l)}$  into \eqref{au} and taking  $l\to\infty$ on both sides, we see
that
$$\check I_j^*\left(\beta_j-\gamma_j-\ds\f{\beta_j\check I^*_j}{d_0(\alpha_j-\check I^*_j)+\check I^*_j}\right)=0,$$
which implies that
\begin{equation}
\lim_{(d_I,d)\to(0,\infty)} \check I^*_j=\ds\f{d_0\alpha_j\left(\beta_j-\gamma_j\right)_+}{d_0(\beta_j-\gamma_j)_++\gamma_j},\;\;j\in\Omega.
\end{equation}
This, combined with \eqref{proofi}, implies  \eqref{r2}.

$(iii)$ By Lemma \ref{asym}, we have
\begin{equation}
\lim_{(d_I,d)\to(0,\infty)} \check I_j=\begin{cases}
0,&j\in H^{-},\\
\alpha_j,&j\in H^+.
\end{cases}
\end{equation}
This, together with \eqref{proofi}, implies \eqref{re3}.
\end{proof}

\section{An example}
In this section, we give an example to illustrate the results in Sections 2-3.
Here we use the star graph (Fig. \ref{fig1}) as the migration pattern between patches, i.e.
the population distribution entails a central deme and $n-1$ colonies extending along rays \cite{Karlin}.
\begin{figure}[htbp]
\centering\includegraphics[width=0.5\textwidth]{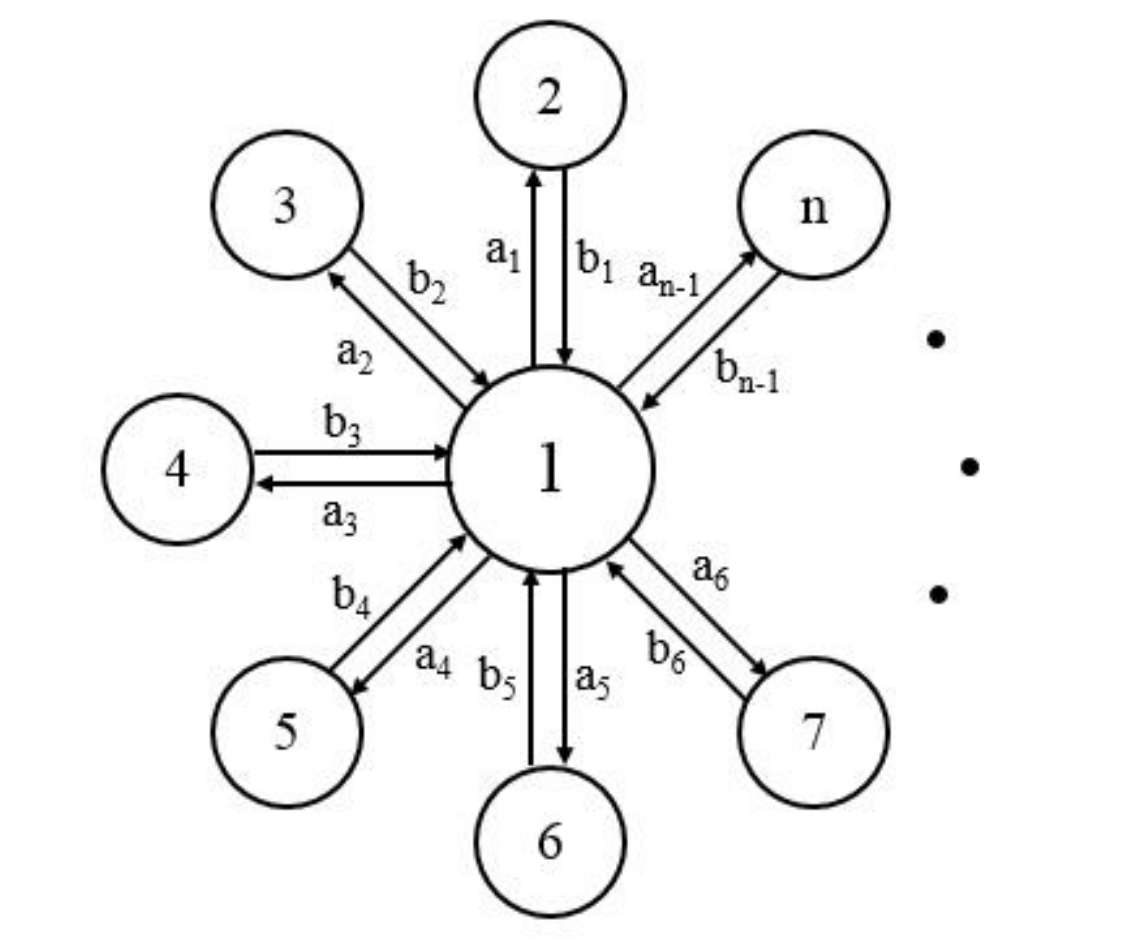}
\caption{The star migration graph.
  \label{fig1}}
\end{figure}
Then the connectivity matrix $L$ is an $n\times n$ $(n\ge2)$ matrix:
\begin{equation}
L= \begin{pmatrix}
-\ds\sum_{i=1}^{n-1}a_i & b_1 & b_2&b_3 &\cdots & b_{n-1} \\
   a_1   &-b_1 & 0 &0&\cdots & 0 \\
   a_2   & 0 &-b_2&0&\cdots & 0\\
   a_3   &0  &0   &-b_3&\cdots&0\\
  \vdots&\vdots&\vdots &\vdots & \ddots & \vdots  \\
  a_{n-1}&0  & 0 &0& \cdots & -b_{n-1}
 \end{pmatrix}.
\end{equation}
Denote $r_i={a_i}/{b_i}$ for
 $i=1,\dots,n-1$. A direct computation gives
 $$\bm \alpha=\left(\ds\f{1}{1+s},\ds\f{r_1}{1+s}, \dots, \ds\f{r_{n-1}}{1+s}\right),$$
 where $s=\ds\sum_{i=1}^{n-1} r_i$.
Here we assume:
\begin{enumerate}
\item [($A$)]
$H^+=\{1,2\}$ and $H^{-}=\{3,\dots,n\}$.
\end{enumerate}
 This assumption means that
 patch-$1$ and patch-$2$ are of high-risk, and all others are of low-risk.

 For this example, we can compute
 \begin{equation*}
\check I_j^{(0)}=\begin{cases}
\alpha_j,&j =1,2,\\
\ds\f{d_Ia_{j-1}\alpha_1}{d_I b_{j-1}+\gamma_j-\beta_j}, &j=3,\dots,n,
\end{cases}
\end{equation*}
and
\begin{equation*}
\begin{split}
h_1(d_I)=&d_I\left[\left(-\sum_{k=1}^{n-1}a_k\right)\alpha_1+b_1\alpha_2+\sum_{k=3}^n\ds\f{d_I\alpha_1a_{k-1}b_{k-1}}{d_Ib_{k-1}+\gamma_k-\beta_k}\right]
+\alpha_1(\beta_1-\gamma_1).\\
h_2(d_I)=&\alpha_2(\beta_2-\gamma_2)>0
\end{split}
\end{equation*}
 It follows from Proposition \ref{prop3.10} that
$h_1(d_I)$
is strictly decreasing and satisfies
\begin{equation}\label{hlimit}
\lim_{d_I\to0}h_1(d_I)=\alpha_1(\beta_1-\gamma_1)>0,\;\;\text{and}\;\;\lim_{d_I\to\infty}h_1(d_I)=\alpha_1(\beta_1-\gamma_1)+\ds\sum_{k=3}^n\alpha_k(\beta_k-\gamma_k).
\end{equation}
By Lemma \ref{monos}, we have
that
\begin{equation*}
\begin{split}
&\lim_{d_I\to 0}s\left(d_IL+diag(\beta_j-\gamma_j)\right)=\ds \max_{1\le k=\le n}(\beta_k-\gamma_k)>0,\\
&\lim_{d_I\to 0}s\left(d_IL+diag(\beta_j-\gamma_j)\right)=\ds\sum_{k=1}^n\alpha_k(\beta_k-\gamma_k).
\end{split}
\end{equation*}
Since $s\left(d_IL+diag(\beta_j-\gamma_j)\right)$ has the same sign as $R_0-1$ and is strictly decreasing for $d_I$, we have the following result.
\begin{proposition}\label{prostar}
Suppose  $a_k,b_k>0$ for $k=1,\dots,n-1$ and $(A)$ holds. Then the following statements hold:
\begin{enumerate}
    \item [$(i)$] If $\ds\sum_{k=1}^n\alpha_k(\beta_k-\gamma_k)>0$, then $R_0>1$ for any $d_I>0$. Moreover,
\begin{enumerate}
\item [$(i_1)$] if $\alpha_1(\beta_1-\gamma_1)+\ds\sum_{k=3}^n\alpha_k(\beta_k-\gamma_k)\ge 0$, then  $J^+=H^+$ and $J^-=H^-$ for any $d_I>0$;
\item [$(i_2)$] if $\alpha_1(\beta_1-\gamma_1)+\ds\sum_{k=3}^n\alpha_k(\beta_k-\gamma_k)< 0$, then there exists a unique $d_I^{**}$ such that
$ h_1(\tilde d_I)=0$, and
 $J^+=H^+$ and $J^-=H^-$ for $0<d_I<d_I^{**}$, and $J^{+}=\{1\}$ and $J^{-}=\{2,\dots,n\}$, or $J^{+}=\{2\}$ and $J^{-}=\{1,3,\dots,n\}$ for $d_I> d_I^{**}$.
 \end{enumerate}
\item [$(ii)$] If $\ds\sum_{k=1}^n\alpha_k(\beta_k-\gamma_k)<0$, then $\alpha_1(\beta_1-\gamma_1)+\ds\sum_{k=3}^n\alpha_k(\beta_k-\gamma_k)< 0$, and there exists $d_I^*>0$ such that $R_0>1$ for $d_I<d_I^*$ and $R_0<1$ for $d_I>d_I^*$. Moreover,
\begin{enumerate}
\item [$(ii_1)$] if $d_I^{**}\ge d_I^*$,  where $d_I^{**}$ is defined as in $(i_2)$, then  $J^+=H^+$ and $J^-=H^-$ for $d_I<d_I^*$;
\item [$(ii_2)$] if $d_I^{**}< d_I^*$, then $J^+=H^+$ and $J^-=H^-$ for $d_I< d_I^{**}$; and $J^{+}=\{1\}$ and $J^{-}=\{2,\dots,n\}$, or $J^{+}=\{2\}$ and $J^{-}=\{1,3,\dots,n\}$ for $d_I\in(d_I^{**},d_I^*)$.
 \end{enumerate}
\end{enumerate}
\end{proposition}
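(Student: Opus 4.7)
The plan is to reduce the statement to direct applications of Theorems \ref{samemono}, \ref{ass}, \ref{dsmain} and Proposition \ref{prop3.10}, using the explicit formulas for $\bm\alpha$, $h_1(d_I)$ and $h_2(d_I)$ already recorded just before the statement.

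First I would pin down $R_0(d_I)$. Since $H^+$ and $H^-$ are both nonempty, $(\beta_j)$ is not a scalar multiple of $(\gamma_j)$, so Theorem \ref{samemono} gives strict monotone decrease of $R_0$ in $d_I$, while Theorem \ref{ass} gives
\[
\lim_{d_I\to 0}R_0=\max_{k}\frac{\beta_k}{\gamma_k}>1,\qquad \lim_{d_I\to\infty}R_0=\frac{\sum_{k=1}^n\alpha_k\beta_k}{\sum_{k=1}^n\alpha_k\gamma_k}.
\]
Under $\sum_{k=1}^n\alpha_k(\beta_k-\gamma_k)>0$ the latter limit exceeds $1$, so $R_0>1$ for every $d_I>0$, giving case $(i)$. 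Under $\sum_{k=1}^n\alpha_k(\beta_k-\gamma_k)<0$ the latter limit is strictly less than $1$, so by strict monotonicity and the intermediate value theorem a unique $d_I^*>0$ realizes $R_0(d_I^*)=1$, giving case $(ii)$.

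Second I would analyze $h_1(d_I)$. Proposition \ref{prop3.10} yields strict monotone decrease of $h_1$, together with the endpoint values \eqref{hlimit}: $\lim_{d_I\to 0}h_1=\alpha_1(\beta_1-\gamma_1)>0$ and $\lim_{d_I\to\infty}h_1=\alpha_1(\beta_1-\gamma_1)+\sum_{k=3}^n\alpha_k(\beta_k-\gamma_k)$. In $(i_1)$ this terminal value is nonnegative, so $h_1(d_I)>0$ throughout $(0,\infty)$. In $(i_2)$ and $(ii)$ it is strictly negative (in $(ii)$ automatically, since $\alpha_2(\beta_2-\gamma_2)>0$ together with $\sum_{k=1}^n\alpha_k(\beta_k-\gamma_k)<0$ forces $\alpha_1(\beta_1-\gamma_1)+\sum_{k=3}^n\alpha_k(\beta_k-\gamma_k)<0$). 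Strict monotonicity and the intermediate value theorem then yield a unique $d_I^{**}>0$ with $h_1(d_I^{**})=0$, with $h_1>0$ on $(0,d_I^{**})$ and $h_1<0$ on $(d_I^{**},\infty)$. Independently, $h_2(d_I)=\alpha_2(\beta_2-\gamma_2)>0$ for every $d_I>0$.

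Finally I would assemble the cases. Whenever $R_0(d_I)>1$ with $h_1(d_I),h_2(d_I)>0$, Theorem \ref{dsmain}$(ii)$ gives $J^+=H^+$ and $J^-=H^-$; this covers all $d_I>0$ in $(i_1)$, the range $(0,d_I^{**})$ in $(i_2)$, the range $(0,d_I^*)$ in $(ii_1)$, and the range $(0,d_I^{**})$ in $(ii_2)$. Whenever $R_0(d_I)>1$ with $h_1(d_I)<0$ and $h_2(d_I)>0$, Theorem \ref{dsmain}$(iii)$ forces $J^+\subsetneqq H^+=\{1,2\}$ and $J^-\supsetneqq H^-$; since $J^+$ is nonempty by the lemma preceding Theorem \ref{dsmain}, exactly one of $J^+=\{1\}$ or $J^+=\{2\}$ holds, yielding the stated dichotomy for $d_I>d_I^{**}$ in $(i_2)$ and for $d_I\in(d_I^{**},d_I^*)$ in $(ii_2)$. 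The main delicate point is the comparison of $d_I^{**}$ and $d_I^*$ in case $(ii)$: these thresholds arise from strict-monotonicity arguments on two unrelated scalar functions (the spectral bound governing $R_0$ and the linear-in-$\alpha^*$ function $h_1$), so either ordering can occur and this is exactly what produces the split $(ii_1)$ vs.\ $(ii_2)$ rather than an obstruction to the proof.
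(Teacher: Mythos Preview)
Your proposal is correct and follows exactly the approach the paper intends: the paper does not supply a separate proof but simply records the formulas for $h_1,h_2$, their limits \eqref{hlimit}, and the monotonicity of $R_0$ immediately before the statement, leaving the reader to invoke Theorems \ref{samemono}, \ref{ass}, \ref{dsmain} and Proposition \ref{prop3.10} precisely as you do. The only cosmetic point is that Proposition \ref{prop3.10} gives ``constant or strictly decreasing'' for $h_1$, but your endpoint argument (the limits differ since $\sum_{k\ge 3}\alpha_k(\beta_k-\gamma_k)<0$) rules out the constant case, so the strict monotonicity you use is justified.
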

\begin{remark}
From Proposition \ref{prostar}, we see that case $(i_1)$ could hold when $\beta_1-\gamma_1$ is sufficiently large; case $(i_2)$ could hold
when $\beta_1-\gamma_1$ is sufficiently small but $\beta_2-\gamma_2$ is sufficiently large; and if both $\beta_1-\gamma_1$ and
$\beta_2-\gamma_2$ are sufficiently small, case $(ii_1)$ or $(ii_2)$ could occur.
\end{remark}
The asymptotic profile of the endemic equilibrium as  $d_I\rightarrow 0$ can also be obtained from Theorem \ref{dsmain2}.
To further illustrate our results, we give a numerical example of star graph with $n=4$. Let
\begin{equation*}
L=
    \begin{pmatrix}
    -6 & 1&1&1\\
    1&-1&0&0\\
    2&0&-1&0\\
    3&0&0&-1
    \end{pmatrix}.
\end{equation*}
Then $\bm \alpha=(1/7, 1/7, 2/7, 3/7)$.
We choose $\beta_1=3, \beta_2=4, \beta_3=1, \beta_4=1$,$\gamma_1=1, \gamma_2=1, \gamma_3=2, \gamma_4=7$  such that $H^+=\{1, 2\}$ and $H^-=\{3, 4\}$, and $N=100$. Theorem \ref{samemono} states that $R_0$ is strictly deceasing in $d_I$ with
$$
\lim_{d_I\to 0}R_0=\max\left\{\frac{\beta_j}{\gamma_j}: j\in\Omega\right\}=4\ \ \ \text{and}\ \ \ \lim_{d_I\to \infty}R_0=\frac{\sum_{j\in\Omega}\alpha_j\beta_j}{\sum_{j\in\Omega}\alpha_j\gamma_j}=\frac{4}{5}.
$$
In Figure \ref{R0}, we plot $R_0$ as a function of $d_I$, which confirms Theorem \ref{samemono}. Here, $R_0-1$ changes sign at $d_I^*\approx 8.478$.

\begin{figure}[htbp]
\centering\includegraphics[width=0.8\textwidth]{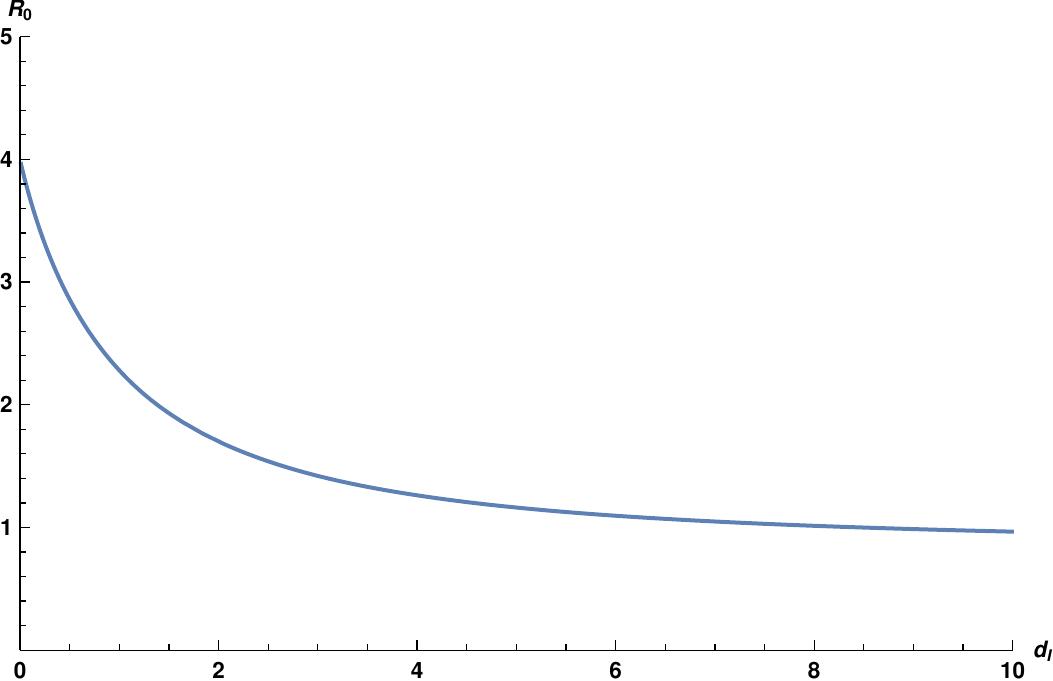}
\caption{The basic reproduction number $R_0$ as a function of $d_I$.
  \label{R00}}
\end{figure}

Then we compute $h_j(d_I)$, $j=1, 2$. By Proposition \ref{prop3.10}, $h_j(d_I)$ is constant or strictly decreasing in $d_I$. By \eqref{hlimit}, we expect
$$
\lim_{d_I\rightarrow 0} h_1(d_I)=\frac{2}{7}\ \ \ \text{and} \ \ \ \lim_{d_I\rightarrow \infty} h_1(d_I)=-\frac{6}{7},
$$
and $h_2(d_I)=3/7$ for all $d_I>0$. These results are confirmed by Figure \ref{R0}. Moreover, we have $h_1(0.549)\approx 0$. By $\sum_{j\in\Omega}\alpha_j(\beta_j-\gamma_j)=-3/7<0$ and Proposition \ref{prostar}(ii), we expect that the profile of the endemic equilibrium changes at $d_I^{**}\approx 0.549$.
\begin{figure}[htbp]
\centering\includegraphics[width=0.8\textwidth]{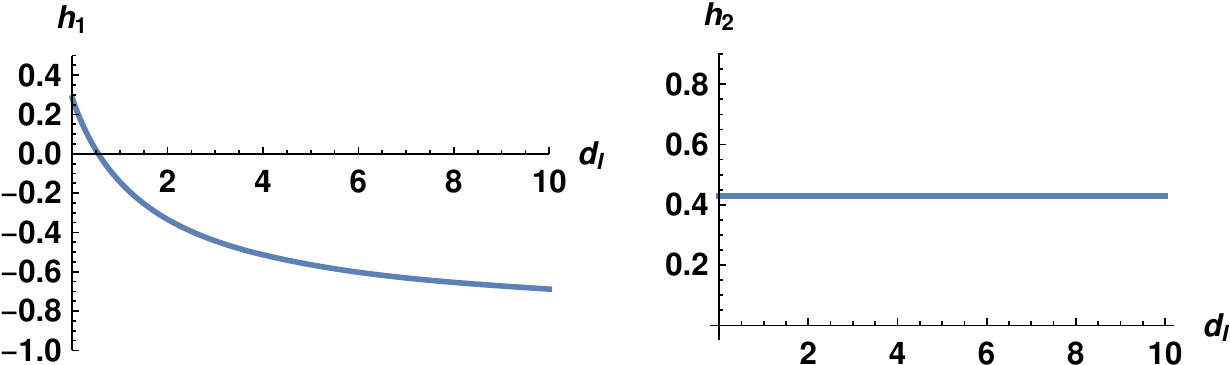}
\caption{The graph of $h(d_I)$.
  \label{R0}}
\end{figure}
In Figure \ref{Fig2}, we plot the $S$ component of the endemic equilibrium as $d_S\rightarrow 0$, where $S_j^*=\ds\lim_{d_S\rightarrow 0} S_j$ for $j=\{1, 2, 3, 4\}$. From the figure, we see that $J^+=\{1, 2\}$ and $J^-=\{3, 4\}$ for $d_I\in (0, d_I^{**})$ and $J^+=\{2\}$ and $J^-=\{1, 3, 4\}$ for $d_I\in ( d_I^{**}, d_I^{*})$.

\begin{figure}[htbp]
\centering\includegraphics[width=\textwidth]{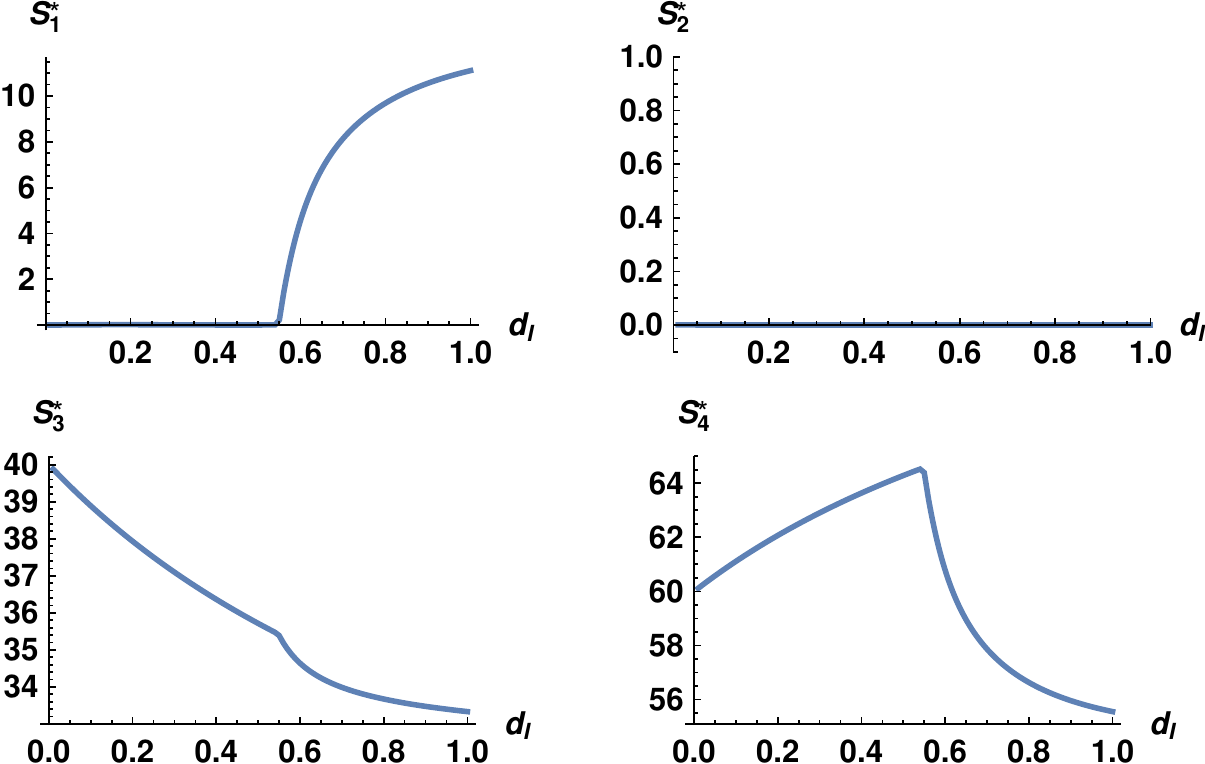}
\caption{The $S$ component of the endemic equilibrium as $d_S\rightarrow 0$.
  \label{Fig2}}
\end{figure}

\end{document}